\documentclass{amsart}
\frenchspacing
\usepackage{amsfonts}
\usepackage{color}

\usepackage{amsmath,amssymb,amsthm, epsfig}

\usepackage{hyperref}

\usepackage{amsmath}

\usepackage{amssymb}

\usepackage[mathscr]{eucal}




\def\R{{\mathbb {R}}}
\def\N{{\mathbb {N}}}


\newlength{\hchng}
\newlength{\vchng}
\setlength{\hchng}{0.55in} \setlength{\vchng}{0.45in}
\addtolength{\oddsidemargin}{-\hchng}
\addtolength{\evensidemargin}{-\hchng}
\addtolength{\textwidth}{2\hchng}


\def \N {\mathbb{N}}
\def \R {\mathbb{R}}

\def \div {\mathrm{div}}

\def \dist {\mathrm{dist}}

\def \Leb {\mathscr{L}^N}

\newcommand{\defeq}{\mathrel{\mathop:}=}
\newcommand{\lip}{{\rm Lip}}

\newcommand{\ud}{\, d}

\newcommand{\ol}{\overline}
\newcommand{\Om}{\Omega}
\newcommand{\I}{\textrm{I}}
\newcommand{\II}{\textrm{II}}

\newcommand{\trm}{\textrm}
\newcommand{\vp}{\varphi}

\newcommand{\abs}[1]{\left|#1\right|}

 \newcommand{\eps}{{\varepsilon}}

\newtheorem{theorem}{Theorem}[section]
\newtheorem{lemma}[theorem]{Lemma}
\newtheorem{proposition}[theorem]{Proposition}
\newtheorem{corollary}[theorem]{Corollary}
\theoremstyle{definition}

\newtheorem{definition}[theorem]{Definition}

\newtheorem{example}[theorem]{Example}
\theoremstyle{remark}
\newtheorem{remark}[theorem]{Remark}
\numberwithin{equation}{section}

\newcommand{\intav}[1]{\mathchoice {\mathop{\vrule width 6pt height 3 pt depth  -2.5pt
\kern -8pt \intop}\nolimits_{\kern -6pt#1}} {\mathop{\vrule width
5pt height 3  pt depth -2.6pt \kern -6pt \intop}\nolimits_{#1}}
{\mathop{\vrule width 5pt height 3 pt depth -2.6pt \kern -6pt
\intop}\nolimits_{#1}} {\mathop{\vrule width 5pt height 3 pt depth
-2.6pt \kern -6pt \intop}\nolimits_{#1}}}


\begin{document}
	
\title[A limiting free boundary problem with gradient constraint]{A limiting free boundary problem with gradient constraint and Tug-of-War games}

\author[P. Blanc, J.V. da Silva and J.D. Rossi]{Pablo Blanc, Jo\~{a}o V\'{i}tor da Silva and Julio D. Rossi }

\address{Departamento de Matem\'atica, FCEyN - Universidad de Buenos Aires,
\hfill\break \indent IMAS - CONICET
\hfill\break \indent Ciudad Universitaria, Pabell\'on I (1428) Av. Cantilo s/n. \hfill\break \indent Buenos Aires, Argentina.}

\email[P. Blanc]{pblanc@dm.uba.ar}

\email[J.V. da Silva]{jdasilva@dm.uba.ar}

\email[J.D. Rossi]{jrossi@dm.uba.ar}
\urladdr{http://mate.dm.uba.ar/~jrossi}

\begin{abstract}
In this manuscript we deal with regularity issues and the asymptotic behaviour (as $p \to \infty$) of solutions for elliptic free boundary problems of $p-$Laplacian type ($2 \leq  p< \infty$):
\begin{equation*}
      -\Delta_p u(x) + \lambda_0(x)\chi_{\{u>0\}}(x) = 0 \quad \mbox{in} \quad \Omega \subset \R^N,
\end{equation*}
with a prescribed Dirichlet boundary data, where $\lambda_0>0$ is a bounded function and $\Omega$ is a regular domain. First, we prove the convergence as $p\to \infty$ of any family of solutions $(u_p)_{p\geq 2}$, as well as we obtain the corresponding limit operator (in non-divergence form) ruling the limit equation,
$$
\left\{
\begin{array}{rcrcl}
  \max\left\{-\Delta_{\infty} u_{\infty}, \,\, -|\nabla u_{\infty}| + \chi_{\{u_{\infty}>0\}}\right\} & = & 0 & \text{in} & \Omega \cap \{u_{\infty} \geq 0\} \\
  u_{\infty} & = & g & \text{on} & \partial \Omega.
\end{array}
\right.
$$
Next, we obtain uniqueness for solutions to this limit problem together with a number of weak geometric and measure theoretical properties as non-degeneracy, uniform positive density, porosity and convergence of the free boundaries.

Finally, we show that any solution to the limit operator is a limit of value functions
for a specific Tug-of-War game.
\newline
\noindent \textbf{Keywords:} Lipschitz regularity estimates, Free boundary problems, $\infty$-Laplace operator, Existence/uniqueness of solutions, Tug-of-War games.
\newline
\noindent \textbf{AMS Subject Classifications: 35J92, 35D40, 91A80}.
\end{abstract}
\maketitle


\section{Introduction}\label{Introd}

 In this article we study diffusion processes governed by quasi-linear operators of $p-$Laplacian type with (possibly) a phase transition regime for solutions, i.e., solutions prescribe a PDE in each \textit{a priori} unknown set (of positivity and negativity respectively)
 $$
    -\Delta_p u + f(u_{-}, u_{+})  = 0 \quad \text{in} \quad \Omega,
 $$
 for a suitable measurable function $f:[0, \infty)\times [0, \infty) \to \R$ with a discontinuity at origin. These models have become mathematically relevant due to their connections with phenomena in applied sciences, as well as several free boundary problems as obstacle type
problems, minimization problems with free boundaries and dead core problems just to mention a few. The problem we are particularly interested is given by
\begin{equation}\label{Eqp-Lapla}
\left\{
\begin{array}{rclcl}
     -\Delta_p u_p(x)+\lambda_0(x)\chi_{\{u_p>0\}}(x) & = & 0 & \mbox{in} & \Omega \\
     u_p(x) & = & g(x) & \mbox{on} & \partial \Omega,
\end{array}
\right.
\end{equation}
where $\Delta_p u = \div(|\nabla u|^{p-2}\nabla u)$ stands for the $p$-Laplace operator, $\lambda_0>0$ is a function (bounded away from zero and from infinity), $g$ is a continuous boundary data and $\Omega \subset \R^N$ is a bounded and regular domain. In this context, $\partial \{u>0\} \cap \Omega$ is the \emph{free boundary} of the problem.

It is worth mentioning that the unique weak solution (cf. \cite[Theorem 1.1 ]{Diaz}) to \eqref{Eqp-Lapla} appears when we minimize the following functional
\begin{equation}\label{eqMin}
    \mathfrak{J}_p[v]  = \int_{\Omega} \left(\frac{1}{p}|\nabla v (x)|^p + \lambda_0(x)v\chi_{\{v>0\}}(x)\right)dx
\end{equation}
over the admissible set $\mathbb{K} =  \left\{v \in W^{1, p}(\Omega)\,\,\,\text{and}\,\, v=g \,\, \text{on} \,\,\partial \Omega\right\}$. Variational problems like \eqref{eqMin} are connected with several applications and were widely studied in the last decades, see
\cite{AP},  \cite{Diaz}, \cite{KKPS}, \cite{LeeShah} and \cite{Manf88}.

In our first result, we infer how weak solutions leave the free boundaries in their positivity set.

\begin{theorem}[{\bf Strong Non-degeneracy}]\label{LGR} Let $u$ be a bounded weak solution to \eqref{Eqp-Lapla}, $\Omega^{\prime} \Subset \Omega$ and let $x_0 \in \overline{\{u >0\}} \cap \Omega^{\prime}$. Then there exists a universal constant $\mathfrak{c}_0$ such that for all $0<r<\min\{1, \dist(\Omega^{\prime}, \partial \Omega)\}$ there holds
\begin{equation}\label{nondeg.est}
   \displaystyle \sup_{\partial B_r(x_0)} \,u(x) \geq \mathfrak{C}_0\left(N, p, \inf_{\Omega} \lambda_0(x)\right)
    r^{\frac{p}{p-1}}.
\end{equation}
\end{theorem}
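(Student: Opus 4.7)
The plan is to argue by contradiction using an explicit radial barrier, in the classical spirit of non-degeneracy arguments for obstacle-type problems. First, by continuity of $u$, one reduces to the case $x_0 \in \{u > 0\}$: for $x_0 \in \overline{\{u>0\}}\cap \Omega^{\prime}$ pick $x_k \in \{u > 0\}$ with $x_k \to x_0$; for $k$ large $B_r(x_k) \Subset \Omega$, and passing to the limit in the estimate at $x_k$ (using uniform continuity of $u$ on $\overline{B_r(x_0)}$) delivers the statement at $x_0$.

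The barrier is
$$
\Phi(x) \defeq \mathfrak{C}_0\,|x-x_0|^{p/(p-1)}, \qquad \mathfrak{C}_0 \defeq \frac{p-1}{p}\left(\frac{\lambda_{\ast}}{N}\right)^{\frac{1}{p-1}}, \qquad \lambda_{\ast} \defeq \inf_{\Omega}\lambda_0 > 0.
$$
A direct computation gives $\Delta_p\Phi \equiv N\left(\mathfrak{C}_0\,p/(p-1)\right)^{p-1} = \lambda_{\ast}$ in $\R^N\setminus\{x_0\}$; i.e., $\Phi$ solves the extremal equation $-\Delta_p\Phi + \lambda_{\ast} = 0$ classically outside $x_0$. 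The exponent $p/(p-1)$ is forced by the $p$-homogeneity of $\Delta_p$ (so that the $|x|$-powers cancel after one $\nabla$ and one $\div$), and $\mathfrak{C}_0$ is then fixed by matching $\lambda_{\ast}$.

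Assume, towards a contradiction, that $\sup_{\partial B_r(x_0)} u < \mathfrak{C}_0 r^{p/(p-1)}$, so $u < \Phi$ on $\partial B_r(x_0)$. The function $v \defeq u-\Phi$ satisfies $v(x_0) = u(x_0) > 0$ and $v < 0$ on $\partial B_r(x_0)$, so the open set $D \defeq \{v > 0\} \cap B_r(x_0)$ is non-empty, compactly contained in $B_r(x_0)$, and $u = \Phi$ on $\partial D$. In $D$ one has $u > \Phi \geq 0$, so the PDE satisfied by $u$ becomes
$$
-\Delta_p u = -\lambda_0(x) \;\leq\; -\lambda_{\ast} = -\Delta_p\Phi \qquad \text{weakly in } D.
$$
The weak comparison principle for the $p$-Laplacian, with boundary data $u = \Phi$ on $\partial D$, then yields $u \leq \Phi$ throughout $D$, contradicting the very definition of $D$. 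This closes the argument.

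The main obstacle is essentially calibration: the exponent $p/(p-1)$ and the constant $\mathfrak{C}_0$ must be chosen precisely for $\Phi$ to be a pointwise (sub)solution of exactly the ``worst-case'' equation satisfied by $u$ in its positivity set. Once the barrier is built, the reduction to $x_0 \in \{u>0\}$ and the invocation of weak comparison are standard, and the dependence of $\mathfrak{C}_0$ on $N$, $p$ and $\inf_{\Omega}\lambda_0$ predicted by the statement is read off directly from the explicit formula.
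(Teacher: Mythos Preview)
Your argument is correct and follows essentially the same approach as the paper: both use the identical radial barrier $\mathfrak{C}_0|x-x_0|^{p/(p-1)}$ with the same constant, and derive a contradiction via the comparison principle from the assumption that $u$ stays below the barrier on $\partial B_r(x_0)$. The only cosmetic difference is that the paper first rescales to $B_1$ and applies comparison on the whole ball for the operator $-\Delta_p v + \hat{\lambda}_0\chi_{\{v>0\}}$, whereas you work directly in $B_r(x_0)$ and apply comparison for the plain $p$-Laplacian on the subset $D=\{u>\Phi\}$ (where $u>0$ automatically); your variant is in fact slightly cleaner since it avoids invoking comparison for the discontinuous operator.
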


We also deal with the analysis of asymptotic behaviour as $p$ diverges.
Recently, motivated by game theory (``Tug of-war games''), in \cite{JPR} it is studied the following variational problem
\begin{equation*}
  \left\{
  \begin{array}{rclcl}
    \displaystyle  \Delta_p \,u_p(x)& = & f(x) & \mbox{in}& \Omega \\
    u_p(x) & = & g(x) & \mbox{on} & \partial \Omega
  \end{array}
  \right.
\end{equation*}
with a forcing term $f\geq0$ and a continuous boundary data. In this context, $\{u_p\}_{p\geq 2}$ converges, up to a subsequence, to a limiting function $u_{\infty}$, which fulfils the following problem in the viscosity sense
\begin{equation}\label{eqlimp-Lap2}
  \left\{
  \begin{array}{rclcl}
    \min\left\{ \Delta_{\infty} \, u_{\infty}(x), |\nabla u_{\infty}(x)|- \chi_{\{f>0\}} (x) \right\} & = & 0 & \mbox{in}&  \Omega\\
    u_{\infty}(x) & = & g(x) & \mbox{on} & \partial \Omega,
  \end{array}
  \right.
\end{equation}
where $\Delta_\infty u(x) \defeq \nabla u(x)^TD^2u(x) \cdot \nabla u(x)$
is the \textit{$\infty-$Laplace operator}. (cf. \cite{ACJ} for a survey).
Such limit problems are known as problems with \textit{Gradient constraint}.
Gradient constraint problems like
\begin{equation}\label{eq:gen_grad_constraint}
  \min\{\Delta_\infty u(x), |\nabla u(x)|-h(x) \}=0,
\end{equation}
where $h\ge 0$, appeared in \cite{J}. By considering solutions to
$$
   F_{\varepsilon}[u] \defeq \min\{\Delta_\infty u, |\nabla u|-\epsilon\}=0 \quad
   \text{(resp. of its pair)} \quad F^{\varepsilon}[u] \defeq \max\{\Delta_\infty u, \epsilon-|\nabla u|\}=0
$$
Jensen provides a mechanism to obtain solutions of the infinity-Laplace equation $-\Delta_\infty u=0$ via an approximation procedure. In this context, he proved uniqueness for the infinity-Laplace equation by first showing that it holds for the approximating equations and then sending $\epsilon \to 0$.
A similar strategy was used in the anisotropic counterpart in \cite{ll}, and a variant of \eqref{eq:gen_grad_constraint} appears in the so called $\infty$-eigenvalue problem, see e.g. \cite{jlm}.

We highlight that, in general, the uniqueness of solutions to \eqref{eq:gen_grad_constraint}
is an easy task if $h$ is a continuous function and strictly positive everywhere.
Moreover, uniqueness is known to hold if $h\equiv 0$, see \cite{J}.
Nevertheless, the case $h\ge 0$ yields significant obstacles.
Such a situation resembles the one that holds for the infinity Poisson equation $-\Delta_\infty u=h$,
where the uniqueness is known to hold if $h>0$ or $h\equiv
0$, and the case $h\ge 0$ is an open problem. In this direction, \cite[Theorem 4.1]{JPR} proved uniqueness for \eqref{eq:gen_grad_constraint} in the special case $h=\chi_D$ under the mild topological condition $\overline D=\overline{D^{\circ}}$ on the set $D \subset \R^N$. Furthermore, they show counterexamples where the uniqueness fails if such topological condition is not satisfied, see \cite[Section 4.1]{JPR}. Finally, from a regularity viewpoint, \cite{JPR} also establishes that viscosity solutions to \eqref{eq:gen_grad_constraint} are Lipschitz continuous.

Hence, in our case a natural question arises:
What is the expected behaviour for family of solutions and their free boundaries as $p\to\infty$? 
This question is one of our motivations in order to study existence, uniqueness, regularity and further 
properties for solutions of gradient constraint type models like \eqref{eq:gen_grad_constraint}.

In our next result, we establish existence and regularity of limit solutions. We will assume in this limit procedure that the boundary datum $g$ is a fixed Lipschitz function.

\begin{theorem}[{\bf Limiting problem}]\label{MThmLim1} Let $(u_p)_{p\geq 2}$ be the family of weak solutions to
\eqref{Eqp-Lapla}. Then, up to a subsequence, $u_p \to u_{\infty}$ uniformly in $\overline{\Omega}$.
Furthermore, such a limit fulfils in the viscosity sense
\begin{equation}\label{EqLim}
\left\{
\begin{array}{rcrcl}
  \max\left\{-\Delta_{\infty} u_{\infty}, \,\, -|\nabla u_{\infty}| + \chi_{\{u_{\infty}>0\}}\right\} & = & 0 & \text{in} & \Omega \cap \{u_{\infty} \geq 0\} \\
  u_{\infty} & = & g & \text{on} & \partial \Omega.
\end{array}
\right.
\end{equation}
Finally, $u_{\infty}$ is a Lipschitz continuous function with
$$
    [u_{\infty}]_{\text{Lip}(\overline{\Omega})} \leq \mathfrak{C}(N)\max\left\{1, [g]_{\text{Lip}(\partial \Omega)}\right\}.
$$
\end{theorem}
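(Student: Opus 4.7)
My plan is a compactness-plus-stability argument with four steps. Step (i): establish a uniform-in-$p$ $L^\infty$ bound on $u_p$ via comparison with a Lipschitz extension of $g$ plus a simple radial barrier absorbing the forcing term $\lambda_0\chi_{\{u_p>0\}}$. Step (ii): obtain a uniform-in-$p$ Lipschitz estimate $[u_p]_{\Lip(\overline\Omega)}\leq \mathfrak{C}(N)\max\{1,[g]_{\Lip(\partial\Omega)}\}$. Step (iii): invoke Arzel\`a--Ascoli to extract a subsequence $u_{p_k}\to u_\infty$ converging uniformly on $\overline\Omega$, the Lipschitz constant passing to the limit. Step (iv): identify $u_\infty$ as a viscosity solution of \eqref{EqLim} via the standard stability procedure as $p\to\infty$, in the spirit of \cite{J} and \cite{JPR}. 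The boundary condition $u_\infty=g$ on $\partial\Omega$ follows at once from uniform convergence.

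For the Lipschitz estimate in step (ii), I would compare $u_p$ at each $x_0\in\overline\Omega$ with the ansatz $v(x)=u_p(x_0)+L\,|x-x_0|+B\,|x-x_0|^{p/(p-1)}$, where $L=\max\{1,[g]_{\Lip(\partial\Omega)}\}$ and $B=B(N,\|\lambda_0\|_\infty)$ is chosen, independently of $p$, so that $-\Delta_p v+\|\lambda_0\|_\infty\geq 0$. A direct computation gives $B$ of order $\|\lambda_0\|_\infty^{1/(p-1)}$, hence uniformly bounded in $p$; since $r^{p/(p-1)}\to r$ uniformly for $r\in[0,\mathrm{diam}(\Omega)]$, the barrier comparison yields $u_p(x)-u_p(x_0)\leq \mathfrak{C}(N)L\,|x-x_0|$ with constant independent of $p$. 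Symmetrizing in $x$ and $x_0$ closes the Lipschitz bound.

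For step (iv), I would factor the $p$-Laplace operator as $\Delta_p u_p=|\nabla u_p|^{p-2}\Delta u_p+(p-2)|\nabla u_p|^{p-4}\Delta_\infty u_p$ and pass to the limit at points where smooth test functions touch $u_\infty$. If $\varphi$ touches $u_\infty$ from above at $x_0\in\Omega\cap\{u_\infty>0\}$, take touching points $x_{p_k}\to x_0$ for $u_{p_k}$; at $x_{p_k}$ one has $|\nabla\varphi|^{p_k-2}\Delta\varphi+(p_k-2)|\nabla\varphi|^{p_k-4}\Delta_\infty\varphi\geq \lambda_0$. If $|\nabla\varphi(x_0)|<1$, both terms on the left tend to $0$ while the right-hand side stays bounded below by $\inf_\Omega\lambda_0>0$, a contradiction; hence $|\nabla\varphi(x_0)|\geq 1$. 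Dividing the same inequality by $(p_k-2)|\nabla\varphi|^{p_k-4}$ and sending $p_k\to\infty$ gives $\Delta_\infty\varphi(x_0)\geq 0$, that is, $-\Delta_\infty\varphi(x_0)\leq 0$. The supersolution inequality is dual: at a lower-touching point, either $|\nabla\varphi(x_0)|\leq 1$ (so the gradient piece of the $\max$ is already nonnegative) or an analogous normalization yields $-\Delta_\infty\varphi(x_0)\geq 0$.

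The main obstacle is the Lipschitz estimate in step (ii) with the \emph{sharp} constant $\mathfrak{C}(N)\max\{1,[g]_{\Lip(\partial\Omega)}\}$: one must absorb the contribution of $\lambda_0$ entirely into the ``$1$'' coming from the gradient-constraint structure of the limit, rather than letting the forcing term produce a $p$-dependent bound that blows up. A minor additional technicality in step (iv) is the case $\nabla\varphi(x_0)=0$ at a touching point, handled by the usual convention for the $\infty$-Laplacian on test functions with vanishing gradient, together with the observation that both pieces of the $\max$ are then automatically of the correct sign.
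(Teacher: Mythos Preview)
Your stability argument in step~(iv) is correct and is exactly what the paper does: factor $\Delta_p$ as $|\nabla\varphi|^{p-2}\Delta\varphi+(p-2)|\nabla\varphi|^{p-4}\Delta_\infty\varphi$, divide by $(p-2)|\nabla\varphi|^{p-4}$, and let $p\to\infty$ along the perturbed touching points. The paper also splits off the case $u_\infty(x_0)=0$ and argues along subsequences $u_{p_k}(x_{p_k})>0$ versus $\le 0$, which you only allude to, but this is a routine completion.

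The genuine gap is in step~(ii). Your barrier $v(x)=u_p(x_0)+L\,|x-x_0|+B\,|x-x_0|^{p/(p-1)}$ is \emph{not} a supersolution of $-\Delta_p v+\|\lambda_0\|_\infty\ge 0$ at interior points $x_0$ when $N\ge 2$. A radial computation gives, with $r=|x-x_0|$ and $v'(r)=L+\tfrac{Bp}{p-1}r^{1/(p-1)}$,
\[
\Delta_p v=\frac{N-1}{r}\,(v')^{p-1}+(p-1)(v')^{p-2}v'',
\]
and the first term alone contributes $\tfrac{N-1}{r}L^{p-1}\to+\infty$ as $r\to 0$; hence $\Delta_p v$ is not bounded above by $\|\lambda_0\|_\infty$ near $x_0$, no matter how $B$ is chosen. (The cone $|x-x_0|$ is $p$-\emph{sub}harmonic for $N\ge 2$, not $p$-superharmonic, so it cannot serve as an upper barrier; this special comparison-with-cones property is particular to $p=\infty$.) Since the $p$-Laplacian is nonlinear, the $Br^{p/(p-1)}$ correction does not simply add to the cone's contribution either. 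Consequently your comparison argument for the interior Lipschitz bound breaks down. The paper avoids this entirely: it uses the variational characterization of $u_p$ as a minimizer of $\mathfrak{J}_p$ to get the energy estimate $\int_\Omega|\nabla u_p|^p\le |\Omega|\,\|\nabla g\|_\infty^p+C$, then invokes Morrey's inequality (with the explicit constant $\mathfrak{C}(N,p)\to 2\mathfrak{c}(N)$) to obtain uniform $C^{0,1-N/p}$ bounds; the Lipschitz bound with the stated constant $\mathfrak{C}(N)\max\{1,[g]_{\Lip}\}$ then emerges only in the limit $p\to\infty$. If you wish to keep a barrier approach, you would need to restrict to boundary points $x_0\in\partial\Omega$ (where the singularity sits outside $\Omega$) and then supply a separate interior estimate---but at that point the energy method is cleaner.
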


Notice that \eqref{EqLim} can be written as a fully non-linear second order operator as follows
$$
\begin{array}{rcl}
  F_\infty: \R \times \R^N \times \text{Sym}(N) & \longrightarrow & \R \\
  (s, \xi, X) & \mapsto & \max\left\{-\xi^T X \xi, -|\xi|+ \chi_{\{s>0\}}\right\},
\end{array}
$$
which is non-decreasing in $s$. Moreover, $F_\infty$ is a degenerate elliptic operator in the sense that
$$
   F_\infty(s, \xi, X) \leq F_\infty(s, \xi, Y) \quad \text{whenever} \quad Y\leq X.
$$
Nevertheless, $F_\infty$ is not in the framework of \cite[Theorem 3.3]{CIL}. Then, to prove uniqueness of limit solutions becomes a non-trivial task. We overcome such difficulty by using ideas from \cite[Section 4]{JPR}
and show that solutions to the limit problem are unique.

\begin{theorem}[{\bf Uniqueness}] \label{teo.unicidad}
There is a unique viscosity solution to \eqref{EqLim}. Moreover, a comparison principle holds, i.e.,
if $g_1 \leq g_2$ on $\partial \Omega$ then the corresponding solutions $u_{\infty}^1$
and $u_{\infty}^2$ verify $u_{\infty}^1 \leq u_{\infty}^2$ in $\Omega$.
\end{theorem}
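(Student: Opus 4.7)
The plan is to reduce both claims to the comparison principle, since uniqueness follows immediately by applying comparison twice with $g_1 = g_2 = g$. Thus the entire argument reduces to showing: if $g_1 \leq g_2$ on $\partial\Omega$ and $u_\infty^1, u_\infty^2$ are the corresponding viscosity solutions of \eqref{EqLim}, then $u_\infty^1 \leq u_\infty^2$ in $\Omega$. I would argue by contradiction, supposing $M := \max_{\overline{\Omega}}(u_\infty^1 - u_\infty^2) > 0$ is attained at some interior point $x_0 \in \Omega$. Since $u_\infty^2(x_0) \geq 0$, this forces $u_\infty^1(x_0) > 0$ and, by continuity of $u_\infty^1$, there is a neighborhood $U \subset \Omega$ of $x_0$ on which $u_\infty^1 > 0$. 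On $U$ the subsolution inequality collapses to $\max\{-\Delta_\infty u_\infty^1, -|\nabla u_\infty^1| + 1\} \leq 0$, yielding in particular the viscosity lower bound $|\nabla u_\infty^1| \geq 1$.

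The main difficulty is that $F_\infty$ lies outside the standard framework of \cite[Theorem 3.3]{CIL} because $\chi_{\{u > 0\}}$ is discontinuous in the unknown. My plan is to adapt the strategy of \cite[Section 4]{JPR}, taking advantage of the observation that the positivity set $D = \{u_\infty^1 > 0\}$ is automatically \emph{open} (by continuity of $u_\infty^1$), so the topological condition $\overline{D} = \overline{D^\circ}$ needed there holds for free. I would then run a classical doubling of variables with penalty $\Phi_j(x,y) = u_\infty^1(x) - u_\infty^2(y) - (j/2)|x - y|^2$, obtaining approximate maximizers $(x_j, y_j) \to (x_0, x_0)$ with common gradient $p_j = j(x_j - y_j)$ and matrices $X_j \leq Y_j$ produced by the Crandall--Ishii lemma.

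A case analysis on which branch of the max is active for each solution, and on whether $u_\infty^2(y_j)$ remains bounded away from zero, then yields the contradiction. In the interior regime where both $u_\infty^1(x_j)$ and $u_\infty^2(y_j)$ stay positive, the problem reduces to comparison for the pure gradient-constrained operator $\max\{-\Delta_\infty u, -|\nabla u| + 1\} = 0$, which follows from manipulating $p_j$ and $X_j \leq Y_j$ in the standard way. The step I expect to be the main obstacle is the borderline situation in which $u_\infty^2(y_j) \to 0$ while $u_\infty^1(x_j) \geq M > 0$: there the supersolution inequality at $y_j$ involves the upper semicontinuous envelope $\chi^*_{\{u_\infty^2 > 0\}}(y_j)$, which equals $1$ precisely when $y_j \in \overline{\{u_\infty^2 > 0\}}$, and it must be combined with the constraint $|p_j| \geq 1$ coming from the subsolution side for $u_\infty^1$ together with the strong non-degeneracy from Theorem \ref{LGR} (passed to the limit $p \to \infty$) in order to rule out the degenerate alternative and derive a contradiction. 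Once every case is excluded, comparison --- and hence uniqueness --- is established.
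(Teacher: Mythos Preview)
Your approach has a genuine gap at the step you call ``follows from manipulating $p_j$ and $X_j \leq Y_j$ in the standard way.'' This is exactly where the Crandall--Ishii machinery \emph{fails} for infinity-Laplacian operators. With $p_j = j(x_j-y_j)$ and $X_j \leq Y_j$, the subsolution side gives $p_j^T X_j p_j \geq 0$ and (once the gradient branch is excluded via $|p_j|\geq 1$) the supersolution side gives $p_j^T Y_j p_j \leq 0$; combining with $X_j \leq Y_j$ yields only $0 \leq p_j^T X_j p_j \leq p_j^T Y_j p_j \leq 0$, i.e.\ $0=0$, and no contradiction. This is the well-known obstruction that forced Jensen to develop his approximation scheme for $-\Delta_\infty$, and it is precisely why $F_\infty$ is outside the scope of \cite[Theorem~3.3]{CIL}; the discontinuity of $\chi_{\{u>0\}}$ is a \emph{second} difficulty, not the only one. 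Two further problems: the assertion ``$u_\infty^2(x_0)\geq 0$'' is unjustified (solutions may be negative, cf.\ the one-dimensional example at the end of the paper), and invoking Theorem~\ref{LGR} or its $p\to\infty$ limit is circular, since those results apply to limits of $u_p$, whereas uniqueness must be proved for arbitrary viscosity solutions.

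The paper's proof takes a completely different, constructive route following \cite[Section~4]{JPR}. It builds a distinguished function $v$ from $g$: first the $\infty$-harmonic extension $\mathfrak h$, then the solution $z$ of $\max\{-\Delta_\infty z,\,1-|\nabla z|\}=0$ with data $g$ (for which comparison is already known), and finally $v=z$ on $\{z\geq 0\}$ while on $\{z<0\}$ one replaces $z$ by the $\infty$-harmonic function with the same boundary values. Since each auxiliary problem enjoys comparison, so does $v$. To show that any viscosity solution $u_\infty$ equals $v$, the paper regularizes $u_\infty$ by sup-convolution $(u_\infty)_\delta$ (semiconvex, still satisfying $\Delta_\infty(u_\infty)_\delta\geq 0$ and $|\nabla(u_\infty)_\delta|\geq 1$ where positive) and runs a discrete gradient-ascent argument based on the increasing-slope estimate for $\infty$-subharmonic functions: one produces a sequence $x_k$ along which $(u_\infty)_\delta$ grows at rate at least $1$ per unit distance, while $z$ is $1$-Lipschitz in the relevant region, so any interior gap $(u_\infty)_\delta - z>0$ is transported to the boundary, a contradiction. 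On $\{z<0\}$ one finishes with Jensen's uniqueness for $-\Delta_\infty$. This bypasses doubling of variables entirely.
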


Next, we will turn our attention to the study of several geometric and analytical properties for
limit solutions and their free boundaries. This analysis has been
motivated by the analysis of the asymptotic behaviour of several variational problems (see for example \cite{daSRS1,  JPR, RT, RTU, RW}).
We have a sharp lower control on how limit solutions detach from their free boundaries.

\begin{theorem}[{\bf Linear growth for limit solutions}]\label{MThmLim2} Let $u_{\infty}$ be a uniform limit to solutions $u_p$ of \eqref{Eqp-Lapla} and $\Omega^{\prime} \Subset \Omega$. Then, for any
$x_0 \in \partial \{u_{\infty}>0\} \cap \Omega^{\prime}$ and any $0<r \ll 1$, the following estimate holds:
\begin{equation}\label{estim.22}
  \displaystyle \sup_{B_r(x_0)} u_{\infty}(x) \geq r.
\end{equation}
\end{theorem}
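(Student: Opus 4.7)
The natural plan is to transfer the strong non-degeneracy for $u_p$ (Theorem \ref{LGR}) to the limit through the uniform convergence $u_p \to u_\infty$ provided by Theorem \ref{MThmLim1}, taking advantage of the fact that the exponent $p/(p-1)$ and (with the right form of the constant) the prefactor $\mathfrak{C}_0$ both tend to $1$ as $p \to \infty$.

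Concretely, fix $x_0 \in \partial \{u_{\infty}>0\} \cap \Omega^{\prime}$ and $0<r \ll 1$ so that $\overline{B_r(x_0)} \subset \Omega^{\prime\prime} \Subset \Omega$. Since $x_0$ lies on the free boundary of $u_\infty$, for every $\varepsilon \in (0,r)$ one can choose $y_\varepsilon \in B_\varepsilon(x_0)$ with $u_\infty(y_\varepsilon) > 0$. By the uniform convergence $u_p \to u_\infty$ in $\overline{\Omega}$, we have $u_p(y_\varepsilon) > 0$ for all $p$ sufficiently large, and in particular $y_\varepsilon \in \overline{\{u_p>0\}}\cap\Omega^{\prime\prime}$. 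Applying Theorem \ref{LGR} at $y_\varepsilon$ with radius $s := r - \varepsilon$ yields
\[
\sup_{\partial B_{s}(y_\varepsilon)} u_p \;\geq\; \mathfrak{C}_0\!\left(N,p,\inf_\Omega \lambda_0\right) s^{\,p/(p-1)}.
\]
Since $\partial B_{s}(y_\varepsilon) \subset \overline{B_r(x_0)}$, we obtain
\[
\sup_{\overline{B_r(x_0)}} u_p \;\geq\; \mathfrak{C}_0\!\left(N,p,\inf_\Omega \lambda_0\right) (r-\varepsilon)^{p/(p-1)}.
\]

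Now I pass to the limit $p \to \infty$. The uniform convergence gives $\sup_{\overline{B_r(x_0)}} u_p \to \sup_{\overline{B_r(x_0)}} u_\infty$. On the right-hand side, $(r-\varepsilon)^{p/(p-1)} \to r-\varepsilon$ (assuming $r-\varepsilon \leq 1$). The key point is the $p$-dependence of the non-degeneracy constant: by inspecting the proof of Theorem \ref{LGR}, $\mathfrak{C}_0$ takes the shape of a power with exponent $1/(p-1)$ applied to a quantity depending on $N$ and $\inf_\Omega \lambda_0$ (together with an algebraic factor in $p$), so that $\mathfrak{C}_0(N,p,\inf_\Omega \lambda_0) \to 1$ as $p \to \infty$. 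Consequently,
\[
\sup_{\overline{B_r(x_0)}} u_\infty \;\geq\; r - \varepsilon,
\]
and letting $\varepsilon \to 0^+$ and using continuity of $u_\infty$ yields \eqref{estim.22}.

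The main technical obstacle is the careful bookkeeping of the $p$-dependence of $\mathfrak{C}_0(N,p,\inf_\Omega \lambda_0)$: one must verify that the constant produced in Theorem \ref{LGR} does not degenerate (tending to $0$ or blowing up) as $p \to \infty$. This is natural since the usual barrier-comparison argument for the strong non-degeneracy with source $\lambda_0 \chi_{\{u>0\}}$ produces a constant of the form $\bigl(c(N)\,\inf_\Omega \lambda_0\bigr)^{1/(p-1)}$ times a harmless factor, but it must be extracted explicitly from the proof. Once that is in hand, the limiting argument above is essentially a clean diagonal passage using the uniform convergence on compacts $\Omega^{\prime\prime} \Subset \Omega$.
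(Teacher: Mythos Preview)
Your proposal is correct and follows essentially the same route as the paper: apply the strong non-degeneracy estimate (Theorem~\ref{LGR}) at nearby points lying in $\overline{\{u_p>0\}}$, use the explicit form $\mathfrak{C}_0=\frac{p-1}{p}\bigl(\inf_\Omega\lambda_0/N\bigr)^{1/(p-1)}\to 1$, and pass to the limit via uniform convergence. Your $\varepsilon$-argument to locate a point $y_\varepsilon$ with $u_\infty(y_\varepsilon)>0$ (hence $u_p(y_\varepsilon)>0$ for large $p$) is in fact a cleaner justification than the paper's brief assertion that such approximating points $x_p\in\overline{\{u_p>0\}}$ exist.
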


Our main motivation for considering \eqref{EqLim} comes from its connection to modern game theory. Recently, in \cite{PSSW} the authors introduced a two player random turn game called ``Tug-of-war'', and showed that, as the ``step size'' converges to zero, the value functions of this game converge to the unique viscosity solution of the infinity-Laplace equation $-\Delta_\infty u=0$.
We define and study a variant of the Tug-of-war game, that we call {\it Pay or Leave Tug-of-War}, which was inspired by the one in \cite{JPR}.
In our game, one of the players decide to play the usual Tug-of-war or to pass the turn to the other who decides to end the game immediately (and get $0$ as final pay-off) or move and pay $\eps$ (which is the step size).
It is then shown that the value functions of this new game, namely $u^{\varepsilon} $, fulfil a Dynamic Programming Principle (DPP) given by
\[
\begin{split}
u^{\varepsilon}(x) = \min \Bigg\{ \frac12  \left(\sup_{y\in  B_\eps (x)}
u(y) + \inf_{y\in  B_\eps (x)} u^{\varepsilon}(y) \right) ;
\max \Bigg\{0;
 \sup_{y\in  B_\eps (x)} u^{\varepsilon}(y) - \eps \Bigg\}\Bigg\}.
\end{split}
\]
Moreover, we show that the sequence $u^{\varepsilon} $ converge and the corresponding limit is a viscosity solution to \eqref{EqLim}. Therefore, besides its own interest, the game-theoretic scheme provides an alternative mechanism to prove the existence of a viscosity solution to \eqref{EqLim}.

\begin{theorem}
\label{corofin}
Let $u^{\varepsilon}$ be the value functions of the game previously described. Then,
it holds that
\[
\begin{split}
u^{\eps}\to u \qquad
\textrm{uniformly in}\quad\overline{\Omega},
\end{split}
\]
being $u$ the unique viscosity solution to equation~\eqref{EqLim}.
\end{theorem}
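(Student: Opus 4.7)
The plan is to follow the Barles–Souganidis paradigm for two-player stochastic games: first obtain existence, uniform bounds, and a uniform modulus of continuity for $u^{\varepsilon}$ from game-theoretic arguments; next extract a uniformly convergent subsequence via an Arzelà–Ascoli-type argument; then identify the limit as a viscosity solution of \eqref{EqLim}; and finally invoke the uniqueness Theorem~\ref{teo.unicidad} to upgrade subsequential convergence to full convergence.

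For the first step, the DPP operator
\[
T[v](x) \defeq \min\bigg\{\tfrac{1}{2}\bigl(\sup_{B_{\varepsilon}(x)} v + \inf_{B_{\varepsilon}(x)} v\bigr);\ \max\{0;\ \sup_{B_{\varepsilon}(x)} v - \varepsilon\}\bigg\}
\]
is monotone, and combined with a fixed-point argument (or directly with a martingale/optional-stopping analysis of the game), this yields existence and uniqueness of a bounded $u^{\varepsilon}$ matching $g$ on the usual boundary strip of width $\varepsilon$. Comparing with explicit strategies (e.g.\ the ``pay $\varepsilon$ and head toward a target'' strategy for one player, and the Tug-of-War cone strategy for the other) produces the a priori bounds $\inf g \le u^{\varepsilon} \le \sup g$ on $\overline{\Omega}$ and the asymptotic equicontinuity estimate
\[
|u^{\varepsilon}(x) - u^{\varepsilon}(y)| \le \mathfrak{C}(N)\max\{1, [g]_{\Lip(\partial\Omega)}\}|x - y| + o_{\varepsilon}(1),
\]
consistent with the Lipschitz bound in Theorem~\ref{MThmLim1}. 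A version of Arzelà–Ascoli for asymptotically equicontinuous sequences then extracts $u^{\varepsilon_j}$ converging uniformly on $\overline\Omega$ to a Lipschitz function $u$.

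The heart of the argument is verifying that $u$ is a viscosity solution of \eqref{EqLim}. Given a $C^{2}$ test function $\varphi$ touching $u$ strictly from below at $x_{0}\in\Omega\cap\{u\ge 0\}$, uniform convergence yields local maximum points $x_{\varepsilon}$ of $u^{\varepsilon}-\varphi$ with $x_{\varepsilon}\to x_{0}$. Plugging into the DPP and using the standard expansions
\[
\tfrac{1}{2}\bigl(\sup_{B_{\varepsilon}(x_\varepsilon)} \varphi + \inf_{B_{\varepsilon}(x_\varepsilon)} \varphi\bigr) = \varphi(x_\varepsilon) + \tfrac{\varepsilon^{2}}{2|\nabla\varphi(x_\varepsilon)|^{2}}\Delta_{\infty}\varphi(x_\varepsilon) + o(\varepsilon^{2}),
\]
\[
\sup_{B_{\varepsilon}(x_\varepsilon)} \varphi = \varphi(x_\varepsilon) + \varepsilon|\nabla\varphi(x_\varepsilon)| + O(\varepsilon^{2}),
\]
(with the usual perturbation fix when $\nabla\varphi(x_{0})=0$) and rescaling the branches of the $\min$ by $\varepsilon^{2}$ and $\varepsilon$ respectively, passage to the limit yields the supersolution inequality $\max\{-\Delta_{\infty}\varphi(x_{0}),\,-|\nabla\varphi(x_{0})|+\chi_{\{u>0\}}(x_{0})\}\ge 0$; the specular argument with $\varphi$ touching from above yields the subsolution inequality. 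The main obstacle here is handling the jump of $\chi_{\{u>0\}}$: one must treat separately the strict positivity set (where $\chi\equiv 1$ in a full neighborhood and the analysis is stable) and points of the free boundary/zero set, relaxing to the appropriate upper or lower semicontinuous envelope of the reaction term, in the spirit of \cite[Section 4]{JPR}.

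Once $u$ is known to be a viscosity solution to \eqref{EqLim} with boundary datum $g$, Theorem~\ref{teo.unicidad} forces every subsequential uniform limit to coincide with the same function $u$. A standard diagonal argument then upgrades subsequential convergence to convergence of the whole family, giving $u^{\varepsilon}\to u$ uniformly on $\overline{\Omega}$, as claimed.
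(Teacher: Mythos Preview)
Your outline is correct and matches the paper's approach almost step for step: existence of the DPP solution via monotone iteration and martingale arguments, uniform bounds and asymptotic equicontinuity, Arzel\`a--Ascoli extraction, identification of the limit as a viscosity solution via the asymptotic expansions (with the case split according to the sign of $u$ to handle the discontinuity of $\chi_{\{u>0\}}$), and finally the uniqueness theorem to pass from subsequential to full convergence. Two minor remarks: the correct lower bound is $\min\{0,\inf g\}$ rather than $\inf g$ (Player~I may terminate with payoff $0$), and the paper obtains the asymptotic equicontinuity not from explicit cone strategies but from an oscillation/contradiction argument in the style of \cite{JPR}; neither point affects the validity of your scheme.
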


Notice that we have been able to obtain a game approximation for a free boundary problem
that involves the set where the solution is positive, $\{u>0\}$. This task involves the
following difficulty, if one tries to play with a rule of the form "one player sells the turn when the expected
payoff is positive", then the value of the game will not be well defined since this rule
is an anticipating strategy (the player needs to see the future in order to decide whet he is going
to play). We overcome this difficulty by letting the other player the chance to stop
the game (and obtain 0 as final payoff in this case) or buy the turn (when the first player gives this
option). In this way we obtain a set of rules that are non-anticipating and give a DPP that
can be seen as a discretization of the limit PDE.

Finally, we include several other quantitative/qualitative properties for free
boundaries of limit solutions, among which we include a result on convergence of the free boundaries
$$
  \partial \{u_p > 0\} \to \partial \{u_{\infty} > 0\}\quad \mbox{as} \quad  p\to \infty,
$$
in the sense of the Hausdorff distance (see Theorem \ref{MThmLim5}).
Moreover, we deal with regularity of certain limit free boundary points under an appropriate geometric condition (see Theorem \ref{RegFB}). At the end we collect some examples in order to illustrate some features of the limit problem.

\section{Preliminaries}\label{Section2}

\begin{definition}[{\bf Weak solution}] $u \in W^{1, p}_{\text{loc}}(\Omega)$ is a weak supersolution (resp. subsolution) to
\begin{equation}\label{eqweaksol}
     -\Delta_p u = \Psi(x, u) \quad \text{in }  \Omega,
\end{equation}
if for all $0\leq \varphi \in C^1_0(\Omega)$ it holds
$$
  \displaystyle \int_{\Omega} |\nabla u|^{p-2}\nabla u\cdot \nabla \varphi(x)\,dx \geq \int_{\Omega} \Psi(x, u)\varphi(x)\,dx \quad \left(\text{resp.}\,\,  \leq \int_{\Omega} \Psi(x, u)\,dx\right).
$$
Finally, $u$ is a weak solution to \eqref{eqweaksol} when it is simultaneously a super-solution and a  sub-solution.
\end{definition}

Since we are assuming that $p$ is large, then \eqref{Eqp-Lapla} is not singular at points where the gradient vanishes. Consequently, the mapping
$$
   x \mapsto \Delta_p \phi(x) = |\nabla \phi(x)|^{p-2}\Delta \phi(x) + (p-2)|\nabla \phi(x)|^{p-4}\Delta_{\infty} \phi(x)
$$
is well-defined and continuous for all $\phi \in C^2(\Omega)$.

Taking into account that the limiting solutions need not be smooth and the fact that the
infinity-Laplace operator is not in divergence form, we must use the appropriate notion of weak
solutions.
Next, we introduce the notion of viscosity solution to \eqref{Eqp-Lapla}. We refer to the survey \cite{CIL} for the general theory of viscosity solutions.

\begin{definition}[{\bf Viscosity solution}]
  An upper (resp. lower) semi-continuous function $u: \Omega \to \R$ is called a viscosity subsolution
  (resp. supersolution) to \eqref{Eqp-Lapla} if, whenever $x_0 \in \Omega$ and $\phi \in C^2(\Omega)$ are such that $u-\phi$ has a strict local maximum (resp. minimum) at $x_0$, then
$$
   -\Delta_p \phi(x_0) + \lambda_0(x_0)\chi_{\{\phi>0\}}(x_0)\leq 0 \quad (\text{resp.} \,\,\,\geq 0).
$$
Finally, $u \in C(\Omega)$ is a viscosity solution to \eqref{Eqp-Lapla} if it is
simultaneously a viscosity subsolution and a viscosity supersolution.
\end{definition}

Analogously we state the definition of viscosity solution to \eqref{EqLim}.

\begin{definition}\label{def:viscosolweak}
An upper semi-continuous (resp. lower semi-continuous) function $u\colon\Om\to\R$ is a
viscosity subsolution (resp. supersolution) to \eqref{EqLim} in $\Om$ if, whenever
$ x_0 \in\Om$ and $\varphi\in C^2(\Om)$ are such that
$u-\varphi$ has a strict local maximum (resp. minimum) at $ x_0$, then
\begin{equation}
\label{subineqs}
\max\{-\Delta_\infty \vp(x), \chi_{\{u\geq 0\}}(x_0)-\abs{\nabla \vp(x_0)}\}\le 0
\end{equation}
respectively
\begin{equation}\label{subineqs2}
\max\{-\Delta_\infty \vp(x), \chi_{\{u> 0\}}(x_0)-\abs{\nabla \vp(x_0)}\}\geq 0.
\end{equation}

Finally, a continuous function $u\colon\Om\to\R$ is a \emph{viscosity
solution} to \eqref{EqLim} in $\Om$ if it is both a viscosity
subsolution and a viscosity supersolution.
\end{definition}

Remark that, since \eqref{subineqs} does not depend on $\phi(x_0)$, we can assume that $\phi$ satisfies $u(x_0) = \phi(x_0)$ and $u(x)<\phi(x)$, when $x \neq x_0$. Analogously, in \eqref{subineqs2} we can assume that $u(x_0) = \phi(x_0)$ and
$u(x)>\phi(x)$, when $x \neq x_0$.
Also we remark that \eqref{subineqs} is equivalent to
$$
   -\Delta_{\infty} \phi(x_0) \leq 0 \quad \text{and} \quad -|\nabla \phi(x_0)|+1.\chi_{\{u>0\}}(x_0) \leq 0;
$$
and that \eqref{subineqs2} is equivalent to
$$
   -\Delta_{\infty} \psi(x_0) \geq 0 \quad \text{or} \quad -|\nabla \psi(x_0)|+1.\chi_{\{u\geq 0\}}(x_0) \geq 0.
$$

The following lemma gives a relation between weak and viscosity sub and super-solutions to \eqref{Eqp-Lapla}.

\begin{lemma}\label{EquivSols} A continuous weak sub-solution (resp. super-solution) $u \in W_{\text{loc}}^{1,p}(\Omega)$ to \eqref{Eqp-Lapla} is a viscosity sub-solution (resp. super-solution) to
$$
  -\left[ |\nabla u(x)|^{p-2} \Delta u(x) + (p-2)|\nabla u(x)|^{p-4}\Delta_{\infty} u (x)
  \right] = -\lambda_0(x)\chi_{\{u>0\}}(x) \quad \text{in} \quad \Omega.
$$
\end{lemma}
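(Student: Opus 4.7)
The plan is a proof by contradiction in the spirit of Juutinen--Lindqvist--Manfredi, using $(u-\psi)_+$ as a test function and exploiting the strict monotonicity of the $p$-Laplacian. I will write it for the subsolution case; the supersolution case is symmetric.

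\textbf{Step 1 (Contrapositive setup).} Suppose $u\in W^{1,p}_{\mathrm{loc}}(\Omega)\cap C(\Omega)$ is a weak subsolution but fails to be a viscosity subsolution at some $x_0$. Then there exist $\phi\in C^2(\Omega)$ with $u-\phi$ attaining a strict local maximum at $x_0$, $u(x_0)=\phi(x_0)$, and
$$
-|\nabla\phi(x_0)|^{p-2}\Delta\phi(x_0)-(p-2)|\nabla\phi(x_0)|^{p-4}\Delta_\infty\phi(x_0)+\lambda_0(x_0)\chi_{\{u>0\}}(x_0)>0.
$$
The first task is to extend this strict inequality to a full ball $B_r(x_0)$. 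Since $p\ge 2$, $-\Delta_p\phi$ is continuous. The discontinuous term is handled case by case: if $u(x_0)>0$, continuity of $u$ gives $\chi_{\{u>0\}}\equiv 1$ on a neighborhood; if $u(x_0)\le 0$ then $\chi_{\{u>0\}}(x_0)=0$, so the pointwise inequality forces $-\Delta_p\phi(x_0)>0$, and continuity plus $\lambda_0\chi_{\{u>0\}}\ge 0$ yields the full inequality in a small ball.

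\textbf{Step 2 (Preparing the test function).} Let $m=\min_{\partial B_r}(\phi-u)>0$ (strict maximum) and set $\psi=\phi-m/2$. Then $\psi$ still satisfies $-\Delta_p\psi+\lambda_0\chi_{\{u>0\}}>0$ pointwise on $B_r$, but now $\psi(x_0)<u(x_0)$ while $\psi>u$ on $\partial B_r$. Define $D\defeq\{x\in B_r:u(x)>\psi(x)\}$, which is open, nonempty, compactly contained in $B_r$, with $u=\psi$ on $\partial D$. The function $w\defeq(u-\psi)_+\in W^{1,p}_0(D)$ (extended by zero) is a nonnegative admissible test function.

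\textbf{Step 3 (Two inequalities and contradiction).} Using $w$ in the weak subsolution inequality gives
$$
\int_D |\nabla u|^{p-2}\nabla u\cdot\nabla(u-\psi)\,dx\le -\int_D \lambda_0\chi_{\{u>0\}}(u-\psi)\,dx.
$$
On the other hand, multiplying the pointwise inequality for $\psi$ by $(u-\psi)\ge 0$ on $D$, integrating, and integrating by parts (the boundary term vanishes because $u=\psi$ on $\partial D$), one obtains
$$
\int_D |\nabla\psi|^{p-2}\nabla\psi\cdot\nabla(u-\psi)\,dx+\int_D \lambda_0\chi_{\{u>0\}}(u-\psi)\,dx>0.
$$
Subtracting these inequalities,
$$
\int_D \bigl(|\nabla u|^{p-2}\nabla u-|\nabla\psi|^{p-2}\nabla\psi\bigr)\cdot\nabla(u-\psi)\,dx<0,
$$
contradicting the well-known monotonicity inequality $(|\xi|^{p-2}\xi-|\eta|^{p-2}\eta)\cdot(\xi-\eta)\ge 0$ valid for all $\xi,\eta\in\R^N$.

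\textbf{Main obstacle.} The only subtle point is Step 1: propagating the strict pointwise inequality to a neighborhood in the presence of the discontinuous free-boundary term $\chi_{\{u>0\}}$. This is resolved by the dichotomy on the sign of $u(x_0)$ together with lower semicontinuity of $\chi_{\{u>0\}}$ (which holds because $u$ is continuous, so $\{u>0\}$ is open). The remainder is a textbook application of monotonicity of the $p$-Laplacian.
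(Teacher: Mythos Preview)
Your argument is essentially the same Juutinen--Lindqvist--Manfredi contradiction scheme the paper uses: shift the test function by a small constant, feed $(\cdot)_+$ into the weak formulation, integrate by parts against the smooth barrier, and conclude via the strict monotonicity of $\xi\mapsto|\xi|^{p-2}\xi$. The paper writes out the supersolution case while you write out the subsolution case; otherwise the structure is identical. Your Step~1 is in fact more careful than the paper's, which simply says ``by continuity'' without the case split on the sign of $u(x_0)$.

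One caveat on ``the supersolution case is symmetric'': it is not \emph{literally} symmetric at the level of your Step~1 dichotomy. In the super direction the reversed inequality reads $-\Delta_p\phi(x_0)+\lambda_0(x_0)\chi_{\{u>0\}}(x_0)<0$, so when $u(x_0)\le 0$ you get $-\Delta_p\phi(x_0)<0$; but now adding the nonnegative term $\lambda_0\chi_{\{u>0\}}\ge 0$ works \emph{against} you and need not preserve strict negativity on a ball (nearby points may well have $u>0$). The paper's device for this is to carry $\chi_{\{\phi>0\}}$ rather than $\chi_{\{u>0\}}$ through the barrier inequality, and only after subtracting use that $\phi\le u$ on $B_r(x_0)$ (since $\phi$ touches from below) to get $\chi_{\{\phi>0\}}\le\chi_{\{u>0\}}$, so the residual term $\lambda_0(\chi_{\{\phi>0\}}-\chi_{\{u>0\}})(\Psi-u)$ has the correct sign. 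With that small twist your supersolution half goes through and the two proofs coincide.
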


\begin{proof} Let us proceed for the case of super-solutions. Fix $x_0 \in \Omega$ and $\phi \in C^2(\Omega)$ such that $\phi$ touches $u$ by bellow, i.e., $u(x_0) = \phi(x_0)$ and $u(x)> \phi(x)$ for $x \neq x_0$. Our goal is to show that
$$
  -\left[|\nabla \phi(x_0)|^{p-2}\Delta \phi(x_0) + (p-2)|\nabla \phi(x_0)|^{p-4}\Delta_{\infty} \phi(x_0)\right] + \lambda_0(x_0)\chi_{\{\phi>0\}}(x_0) \geq 0.
$$
Let us suppose, for sake of contradiction, that the inequality does not hold. Then, by continuity there exists  $r>0$ small enough such that
$$
   -\left[ |\nabla \phi(x)|^{p-2}\Delta \phi(x) + (p-2)|\nabla \phi(x)|^{p-4}\Delta_{\infty} \phi(x)\right] +\lambda_0(x)\chi_{\{\phi>0\}}(x) < 0,
$$
provided that $x \in B_r(x_0)$. Now, we consider
$$
   \Psi(x) \defeq \phi(x)+ \frac{1}{1000}\mathfrak{i}, \quad \text{ where } \quad \mathfrak{i} \defeq \inf_{\partial B_r(x_0)} (u(x)-\phi(x)).
$$
Notice that $\Psi$ verifies $\Psi < u$ on $\partial B_r(x_0)$, $\Psi(x_0)> u(x_0)$ and
\begin{equation}\label{EqPsi}
 -\Delta_p \Psi(x) + \lambda_0(x)\chi_{\{\phi>0\}}(x) <0.
\end{equation}
By extending by zero outside  $B_r(x_0)$, we may use $(\Psi-u)_{+}$ as a test function in \eqref{Eqp-Lapla}. Moreover, since $u$ is a weak super-solution, we obtain
\begin{equation}\label{Eq3.4}
  \displaystyle \int_{\{\Psi>u\}} |\nabla u|^{p-2}\nabla u \cdot \nabla (\Psi-u) dx \geq  -\int_{\{\Psi>u\}} \lambda_0(x)\chi_{\{u>0\}}(x_0)(x)(\Psi-u) dx.
\end{equation}
On the other hand, multiplying \eqref{EqPsi} by $\Psi- u$ and integrating by parts we get
\begin{equation}\label{Eq3.5}
  \displaystyle \int_{\{\Psi>u\}} |\nabla \Psi|^{p-2}\nabla \Psi \cdot \nabla (\Psi-u) dx <  -\int_{\{\psi>u\}} \lambda_0(x)\chi_{\{\phi>0\}}(x)(\Psi-u) dx.
\end{equation}
Next, subtracting \eqref{Eq3.4} from \eqref{Eq3.5} we obtain
$$
\displaystyle  \int\limits_{\{\Psi>u\}} \left(|\nabla \Psi|^{p-2}\nabla \Psi - |\nabla u|^{p-2}\nabla u\right) \cdot \nabla (\Psi-u) dx <  \int\limits_{\{\psi>u\}} \lambda_0(x)\left(\chi_{\{\phi>0\}}(x)-\chi_{\{u>0\}}(x)\right)(\Psi-u)dx <0.
$$
Finally, since the left hand side is bounded by below by $ 2^{-p}\int_{\{\Psi>u\}} |\nabla \Psi- \nabla u|^pdx \geq 0,$ this forces $\Psi \leq u$ in $B_r(x_0)$. However, this contradicts the fact that $\Psi(x_0)>u(x_0)$
and proves the result.

Similarly, one can prove that a continuous weak sub-solution is a viscosity sub-solution.
\end{proof}

\begin{theorem}[{\bf Morrey's inequality}]\label{MorIneq} Let $N<p\leq \infty$. Then for $u \in W^{1, p}(\Omega)$, there exists a constant $\mathfrak{C}(N, p)>0$ such that
$$
  \|u\|_{C^{0, 1-\frac{N}{p}}(\Omega)} \leq \mathfrak{C}(N, p)\|\nabla u\|_{L^{p}(\Omega)}.
$$
\end{theorem}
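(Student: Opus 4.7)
The strategy is the classical one following Morrey (see, e.g., Evans' textbook): a pointwise bound obtained by integrating the gradient along rays emanating from a base point, followed by Hölder's inequality and a triangle-inequality argument between arbitrary pairs of points. First I would reduce, by mollification and a standard density argument, to the case $u \in C^1(\Om)\cap W^{1,p}(\Om)$, proving the inequality for smooth $u$ and then passing to the limit.

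For $x \in \Om$ and $B_r(x) \Subset \Om$, the fundamental theorem of calculus along the segment from $x$ to $y \in B_r(x)$ gives
$$
u(y) - u(x) = \int_0^{|y-x|} \nabla u(x + t\omega)\cdot\omega \, dt, \qquad \omega = \frac{y-x}{|y-x|}.
$$
Integrating in $y$ over $B_r(x)$ and switching to polar coordinates, one arrives at the classical averaged estimate
$$
\frac{1}{|B_r(x)|}\int_{B_r(x)} |u(y)-u(x)|\, dy \le \frac{C(N)}{r^N}\int_{B_r(x)} \frac{|\nabla u(z)|}{|x-z|^{N-1}}\, dz.
$$
I would then apply Hölder's inequality on the right with conjugate exponents $p$ and $p' = p/(p-1)$. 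The singular integral $\int_{B_r(x)} |x-z|^{-(N-1)p'} dz$ is finite precisely when $(N-1)p' < N$, i.e. when $p > N$ --- this is the crucial place where the hypothesis enters --- and equals $C(N,p)\, r^{N-(N-1)p'}$. Combining yields
$$
\frac{1}{|B_r(x)|}\int_{B_r(x)} |u(y)-u(x)|\, dy \le C(N,p)\, r^{1-N/p}\, \|\nabla u\|_{L^p(B_r(x))}.
$$

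To pass from this averaged bound to Hölder continuity with exponent $\alpha = 1 - N/p$, for any pair $x_1, x_2 \in \Om$ with $r = |x_1 - x_2|$ sufficiently small I would select a ball $B$ of radius comparable to $r$ that contains both points and lies inside $\Om$. Letting $(u)_B = |B|^{-1}\int_B u$ and applying the previous estimate at each of $x_1, x_2$,
$$
|u(x_1) - u(x_2)| \le |u(x_1) - (u)_B| + |u(x_2) - (u)_B| \le C(N,p)\, r^{1-N/p}\, \|\nabla u\|_{L^p(\Om)}.
$$
The $L^\infty$ portion of the $C^{0,\alpha}$ norm is then obtained by combining this Hölder bound with a standard Poincaré-type inequality on the bounded regular domain $\Om$. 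The endpoint $p = \infty$ is immediate since $W^{1,\infty}$ coincides with the space of Lipschitz functions. I do not expect a genuine obstacle since the proof is essentially textbook; the only step requiring care is ensuring that the ray-integration estimate can be applied uniformly for balls sliding inside $\Om$, which is precisely where the hypothesis that $\Om$ is a bounded regular domain is implicitly invoked.
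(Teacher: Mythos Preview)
The paper does not actually prove this theorem: Morrey's inequality is stated as a classical result in the preliminaries section, with no proof given, followed only by a remark recording the explicit form of the constant
$$
\mathfrak{C}(N,p) = \frac{2\mathfrak{c}(N)}{|\partial B_1|^{1/p}}\left(\frac{p-1}{p-N}\right)^{(p-1)/p}
$$
to emphasize that it remains bounded as $p\to\infty$. Your sketch is the standard textbook argument (essentially Evans), and it is correct for the H\"older seminorm; indeed, carrying out the H\"older step explicitly on the Riesz potential term $\int_{B_r(x)}|x-z|^{-(N-1)p'}dz$ produces precisely the factor $\big(\frac{p-1}{p-N}\big)^{(p-1)/p}$ quoted in the paper, so your approach also recovers the explicit constant the authors need.

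One small caveat: the inequality as literally written, with the full $C^{0,\alpha}$ norm on the left and only $\|\nabla u\|_{L^p}$ on the right, fails for constants, so what is really meant (and what the paper actually uses downstream in Lemma~4.1) is the H\"older seminorm estimate. Your attempt to recover the $L^\infty$ part via a Poincar\'e inequality cannot fully succeed for that reason; but this is an imprecision in the statement, not a defect in your argument.
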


We must highlight that the dependence of $\mathfrak{C}$ on $p$ does not deteriorate as $p \to \infty$. In fact,
$$
  \mathfrak{C}(N, p) \defeq \frac{2\mathfrak{c}(N)}{|\partial B_1|^{\frac{1}{p}}}\left(\frac{p-1}{p-N}\right)^{\frac{p-1}{p}},
$$
where $\mathfrak{c}(N)>0$ is a dimensional constant.

\section{Non-degeneracy of solutions}\label{SecNond}

This section is devoted to establish a weak geometrical property which plays a key role in the description of how solutions leave their free boundaries. We show \emph{non-degeneracy} of solutions.

\begin{proof}[{\bf Proof of Theorem \ref{LGR}}]
Due to the continuity of solutions, it is enough to prove such a estimate just at points $x_0 \in \{u>0\} \cap \Omega^{\prime}$.
Let us define the scaled function
$$
  u_r(x) \defeq \frac{u(x_0+rx)}{r^{\frac{p}{p-1}}}.
$$
and the auxiliary barrier $$\displaystyle \Psi (x) \defeq \mathfrak{C}_0 |x|^{\frac{p}{p-1}}
\qquad \mbox{with} \quad
  \displaystyle \mathfrak{C}_0 \defeq \frac{p-1}{p}\left(\frac{ \inf_{\Omega}\lambda_0(x)}{ N}\right)^{\frac{1}{p-1}}.
$$
It is easy to check that
$$
     -\Delta_p \Psi + \hat{\lambda}_0\left( x  \right).\chi_{\{\Psi>0\}}(x) \geq 0 \geq -\Delta_p u_r + \hat{\lambda}_0\left( x  \right). \chi_{\{u_r>0\}}(x) \quad \text{in} \quad B_1,
$$
in the weak sense, where $\hat\lambda_0(x) \defeq \lambda_0(x_0 + rx)$.
Now, if $u_r \leq \Psi$ on the whole boundary of $B_1$, then the Comparison Principle yields that
$$
   u_r \leq \Psi \quad \mbox{in} \quad B_1,
$$
which contradicts the assumption that $u_r(0)>0$. Therefore, there exists a point $y \in \partial B_1$ such that
$$
      u_r(y) > \Psi(y) = \mathfrak{C}_0,
$$
The proof finishes by scaling back $u_r$.
\end{proof}

As a by-product of Theorem \ref{LGR} we obtain a sharp growth estimate near the free boundary:

\begin{corollary}[{\bf Sharp growth}]\label{CorNonDeg}
Let $u$ be a non-negative, bounded weak solution to \eqref{Eqp-Lapla} in $\Omega$ and   $\Omega^{\prime} \Subset \Omega$. Given $x_0 \in \{u>0\} \cap \Omega^{\prime}$, there exists a universal constant $\mathfrak{C}_{\sharp}>0$ such that
$$
  u(x_0) \geq \mathfrak{C}_{\sharp}\dist(x_0, \partial \{u>0\})^{\frac{p}{p-1}}.
$$
\end{corollary}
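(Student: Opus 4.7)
The plan is to combine the non-degeneracy of Theorem \ref{LGR} with the classical Harnack inequality, applied inside the ball where $u$ is guaranteed to be strictly positive. Set $d \defeq \dist(x_0, \partial\{u > 0\})$. I may reduce to the case where $d$ is small (say $d < \min\{1, \tfrac{1}{2}\dist(\Omega', \partial\Omega)\}$), since otherwise the conclusion is immediate by the boundedness of $u$ and an appropriate choice of $\mathfrak{C}_{\sharp}$. By definition of $d$, the inclusion $B_d(x_0) \subset \{u > 0\}$ holds, so $u$ is a non-negative weak solution of $-\Delta_p u = -\lambda_0(x)$ in $B_d(x_0)$, with $\|\lambda_0\|_{\infty}$ bounded.

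First, I would invoke Theorem \ref{LGR} at the point $x_0 \in \{u>0\} \subset \overline{\{u>0\}} \cap \Omega'$ with radius $d/2$, yielding
$$
\sup_{\partial B_{d/2}(x_0)} u \geq \mathfrak{C}_0 \left(\tfrac{d}{2}\right)^{p/(p-1)}.
$$
Next, the Harnack inequality for non-negative solutions of $p$-Laplace type equations with bounded right-hand side (Serrin, Trudinger, DiBenedetto, Tolksdorf) supplies a universal constant $\mathfrak{H} = \mathfrak{H}(N, p, \|\lambda_0\|_{\infty})$ with
$$
\sup_{B_{d/2}(x_0)} u \leq \mathfrak{H}\bigl( u(x_0) + d^{p/(p-1)}\bigr).
$$
Combining these two displays, using that $\sup_{\partial B_{d/2}(x_0)} u \leq \sup_{B_{d/2}(x_0)} u$, and absorbing the $d^{p/(p-1)}$ forcing term on the right produces the bound $u(x_0) \geq \mathfrak{C}_{\sharp}\, d^{p/(p-1)}$ for a universal constant $\mathfrak{C}_{\sharp} > 0$, which is exactly the statement to be proved.

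The main obstacle I anticipate is arranging the constants so the non-degeneracy lower bound strictly dominates the forcing contribution from the Harnack estimate. If this cannot be achieved directly, one either iterates at a smaller radius, or exploits the fact that $u$ is $p$-subharmonic in $B_d(x_0)$ (since $\Delta_p u = \lambda_0 \geq 0$ there) in order to apply a sharper Harnack-type inequality with no forcing term on the right-hand side; the conclusion then follows in either case.
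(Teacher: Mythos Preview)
Your strategy of combining Theorem~\ref{LGR} with the inhomogeneous Harnack inequality is natural and genuinely different from the paper's contradiction argument, but the obstacle you flag is real and neither of your proposed remedies removes it. Writing out the two estimates gives
\[
\mathfrak{C}_0\left(\frac{d}{2}\right)^{\frac{p}{p-1}}\le \sup_{B_{d/2}(x_0)}u\le \mathfrak{H}\!\left(u(x_0)+\|\lambda_0\|_\infty^{\frac{1}{p-1}}\,d^{\frac{p}{p-1}}\right),
\]
so that $u(x_0)\ge d^{p/(p-1)}\bigl(\mathfrak{C}_0\,\mathfrak{H}^{-1}2^{-p/(p-1)}-\|\lambda_0\|_\infty^{1/(p-1)}\bigr)$. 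Since $\mathfrak{C}_0=\tfrac{p-1}{p}(\inf_\Omega\lambda_0/N)^{1/(p-1)}$ while $\mathfrak{H}=\mathfrak{H}(N,p)$ is the (typically large) Harnack constant, there is no reason for this bracket to be positive. Your first fix does not help: replacing $d/2$ by any $\theta d$ rescales both competing terms by the \emph{same} power $\theta^{p/(p-1)}$, so the sign of the bracket is unchanged. Your second fix fails for a structural reason: in $B_d(x_0)$ one has $\Delta_p u=\lambda_0\ge 0$, so $u$ is a nonnegative $p$-\emph{sub}solution, and for such functions the only one-sided estimate available is the local maximum principle, which bounds $u(x_0)$ from above; the weak Harnack inequality that would control $\inf u$ from below applies to \emph{super}solutions, which $u$ is not here.

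The paper instead argues by contradiction and rescaling: assuming $x_k\in\{u>0\}\cap\Omega'$ with $u(x_k)\le k^{-1}d_k^{p/(p-1)}$ and $d_k\to 0$, it sets $v_k(y)=u(x_k+d_ky)/d_k^{p/(p-1)}$, so that $v_k(0)\le 1/k$ while $v_k$ solves the same equation in $B_1$, and then invokes local H\"older regularity of weak solutions to force $\sup_{B_{1/4}}v_k\to 0$, contradicting Theorem~\ref{LGR} applied to $v_k$. The mechanism is that the free small parameter $1/k$, rather than a fixed ratio of structural constants, is what is meant to close the argument.
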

\begin{proof} We argue by contradiction.
Then, there exists a sequence $(x_k)_{k \in \mathbb{N}} \in \{u>0\} \cap \Omega^{\prime}$ with
$$
   d_k \defeq \dist(x_k, \partial \{u>0\} \cap \Omega^{\prime}) \to 0 \quad \text{as} \quad k \to \infty \quad \text{and} \quad u(x_k) \leq  k^{-1} d_k^{\frac{p}{p-1}}.
$$
Now, let us define the auxiliary function $v_k:B_1 \to \R$ by
$$
   v_k(y) \defeq \frac{u(x_k+d_ky)}{d_k^{\frac{p}{p-1}}}.
$$
It is easy to check that
\begin{enumerate}
  \item $-\Delta_p v_k + \lambda_0(x_k+d_ky)\chi_{\{v_k>0\}} = 0$ in $B_1$ in the weak sense.
  \item $v_k(y) \leq \mathfrak{C}(N, p).d_k^{\alpha} + \frac{1}{k} \,\, \forall\,\, y \in B_{\frac{1}{4}}$ due to local H\"{o}lder regularity of weak solutions, see \cite{DB0}.
\end{enumerate}
From the Non-degeneracy Theorem \ref{LGR} and the last sentence we obtain that
\begin{equation}\label{estim.44}
  \displaystyle 0<\mathfrak{C}_0 \left(\frac{1}{4}\right)^{\frac{p}{p-1}} \leq \sup_{B_{\frac{1}{4}}} v_k(y) \leq \mathfrak{C}(N, p)d_k^{\alpha}+\frac{1}{k} \to 0 \quad \text{as} \quad k \to \infty.
\end{equation}
This is a contradiction that concludes the proof.
\end{proof}

\section{The limit problem}

This section is devoted to prove Theorems \ref{MThmLim1} and \ref{MThmLim2} concerning the limit as $p\to \infty$. First, we will prove the existence of a uniform limit for Theorem \ref{MThmLim1} as $p\to\infty$. Remind that since the boundary datum $g$ is assumed to be Lipschitz continuous we can extend it to a Lipschitz function (that we will still call $g$) to the whole $\Omega$.

\begin{lemma}\label{Lemma2.4} Assume $\max\{2, N\}<p < \infty$ and let $u_p \in W^{1, p}(\Omega)$ be a weak solution to \eqref{Eqp-Lapla}. Then,
$$
\|\nabla u_p\|_{L^p(\Omega)} \leq C_1.
$$
Additionally, $u_p \in C^{0, \alpha}(\Omega)$, where $\alpha = 1- \frac{N}{p}$ with the following estimate
  $$
  \frac{|u_p(x)-u_p(y)|}{|x-y|^{\alpha}} \leq C_2,
  $$
where $C_1, C_2>0$ are constants depending on $N$, $ \|\lambda_0\|_{L^{\infty}(\Omega)}$,
$\|g\|_{L^{\infty}(\Omega)}$, $\|\nabla g\|_{L^{\infty}(\Omega)}$.
\end{lemma}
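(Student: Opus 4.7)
The plan is to combine an a priori $L^\infty$ bound for $u_p$ with an energy estimate obtained by testing the weak formulation against $u_p - g$, and then invoke Morrey's embedding (Theorem \ref{MorIneq}) for the H\"older bound. The delicate issue throughout is to ensure that the resulting constants do not degenerate as $p \to \infty$; this is what forces us to work in the regime $p > \max\{2, N\}$ and to keep track of every $p$-dependent factor.

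First I would establish the bound $\|u_p\|_{L^\infty(\Omega)} \leq \|g\|_{L^\infty(\partial\Omega)}$. Setting $M \defeq \|g\|_{L^\infty(\partial\Omega)}$, the function $(u_p - M)^+$ lies in $W^{1,p}_0(\Omega)$ and, used as a test function in the weak formulation, yields
$$
\int_{\{u_p > M\}} |\nabla u_p|^p \, dx = -\int_\Omega \lambda_0 \chi_{\{u_p > 0\}} (u_p - M)^+ \, dx.
$$
The left-hand side is non-negative while the right-hand side is non-positive, so both vanish and $u_p \leq M$ almost everywhere. A symmetric argument with $(u_p + M)^-$, whose support is contained in $\{u_p < -M\} \subset \{\chi_{\{u_p>0\}} = 0\}$, yields $u_p \geq -M$.

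Next, since $g$ is Lipschitz its extension (still denoted $g$) lies in $W^{1,\infty}(\Omega)$ and $u_p - g \in W^{1,p}_0(\Omega)$. Using $u_p - g$ as test function and rearranging,
$$
\int_\Omega |\nabla u_p|^p \, dx = \int_\Omega |\nabla u_p|^{p-2} \nabla u_p \cdot \nabla g \, dx - \int_\Omega \lambda_0 \chi_{\{u_p > 0\}} (u_p - g) \, dx.
$$
Bounding the last integrand by $2\|\lambda_0\|_\infty \|g\|_\infty$ via Step~1, and treating the first integral by H\"older followed by Young's inequality $ab^{p-1} \leq \frac{1}{p} a^p + \frac{p-1}{p} b^p$ (with $a=\|\nabla g\|_{L^p}$, $b=\|\nabla u_p\|_{L^p}$), one absorbs the dominant gradient term and is left with
$$
\|\nabla u_p\|_{L^p(\Omega)}^p \leq \|\nabla g\|_{L^\infty(\Omega)}^p |\Omega| + 2p\, \|\lambda_0\|_\infty \|g\|_\infty |\Omega|.
$$
Taking $p$-th roots and observing that $p^{1/p}, |\Omega|^{1/p} \to 1$ as $p \to \infty$, the right-hand side remains bounded by a constant $C_1$ depending only on $N$, $\|\lambda_0\|_\infty$, $\|g\|_\infty$ and $\|\nabla g\|_\infty$ (and the fixed domain $\Omega$).

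Finally, applying Morrey's inequality (Theorem \ref{MorIneq}) to $u_p - g \in W^{1,p}_0(\Omega)$ and adding the Lipschitz contribution of $g$ gives
$$
[u_p]_{C^{0, 1-N/p}(\overline{\Omega})} \leq \mathfrak{C}(N,p)\bigl(\|\nabla u_p\|_{L^p(\Omega)} + \|\nabla g\|_{L^p(\Omega)}\bigr) + \|\nabla g\|_{L^\infty(\Omega)} \operatorname{diam}(\Omega)^{N/p},
$$
and the explicit expression for $\mathfrak{C}(N,p)$ recorded after Theorem \ref{MorIneq} is uniformly bounded as $p\to\infty$, producing the desired constant $C_2$. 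The main obstacle is precisely this uniformity in $p$: at every step one has to choose the form of Young's inequality and of Morrey's inequality so that every $p$-dependent factor stays bounded in the limit $p\to\infty$, which is exactly what makes these estimates useful as input for the asymptotic analysis carried out in Theorem \ref{MThmLim1}.
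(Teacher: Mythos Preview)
Your proof is correct and arrives at the same estimate, but via a different mechanism than the paper. The paper exploits the \emph{variational} characterization of $u_p$ as the minimizer of
\[
\mathfrak{J}_p[v]=\int_\Omega \frac{|\nabla v|^p}{p}\,dx+\int_{\{v>0\}}\lambda_0 v\,dx
\]
over $\mathbb{K}$, and simply compares $\mathfrak{J}_p[u_p]\le \mathfrak{J}_p[g]$; together with the maximum-principle bound $\|u_p\|_{L^\infty}\le\|g\|_{L^\infty}$ (which the paper just asserts, while you prove it carefully via truncation), this yields the gradient bound in one line with no need for Young's inequality. You instead work from the \emph{Euler--Lagrange} identity, testing with $u_p-g$ and then absorbing the cross term $\int|\nabla u_p|^{p-2}\nabla u_p\cdot\nabla g$ by H\"older and Young. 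Both routes give the same final inequality $\|\nabla u_p\|_{L^p}^p\le \|\nabla g\|_{L^\infty}^p|\Omega|+Cp\|\lambda_0\|_\infty\|g\|_\infty|\Omega|$, and both handle Morrey identically. The paper's minimality argument is shorter; your PDE argument has the minor advantage of applying to any weak solution (not only minimizers), and it makes the $L^\infty$ bound explicit rather than leaving it as a background fact.
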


\begin{proof}
The unique weak solution $u_p\in W^{1,p} (\Omega)\cap C(\overline\Omega)$ to  $\Delta_p u_p= \lambda_0
\chi_{\{u_p>0\}}$ with fixed Lipschitz continuous boundary values $g$, can be characterized as being the
minimizer for the functional
$$
    \mathfrak{J}_p[u]   = \int_{\Omega} \frac{|\nabla u|^p}{p} \, dx + \int_{\{u>0\}} \lambda_0 u \, dx
$$
in the set of functions $\mathbb{K} = \{ u \in W^{1,p} (\Omega) \ : \ u =g \text{ on } \partial \Omega\}$.
Using $g$ as test function and the fact that $\|u_p\|_{L^{\infty}(\Omega)}\leq \|g\|_{L^{\infty}(\Omega)}$ we obtain
$$
\begin{array}{rcl}
  \displaystyle \int_{\Omega} |\nabla u_p|^p \, dx & = & \displaystyle
  \int_{\Omega} |\nabla g|^p \, dx + \int_{\{g>0\}} \lambda_0 g \, dx - \int_{\Omega} \lambda_0(u_p)_{+}\, dx \\[10pt]
   & \leq  & \displaystyle C \|\nabla g\|^{p}_{L^\infty (\Omega)} + C\|\lambda_0\|_{L^{\infty}(\Omega)}\|g\|_{L^{\infty}(\Omega)}.
\end{array}
$$
Therefore,
$$
\|\nabla u_p\|_{L^p(\Omega)} \leq C_1.
$$
Next, for $p>N$ by Morrey's estimates we get
$$
   \frac{|u_p(x)-u_p(y)|}{|x-y|^{1- \frac{N}{p}}} \leq \mathfrak{C} \|\nabla u_p\|_{L^p(\Omega)}.
$$
\end{proof}

Next, we show that any family of weak solutions to \eqref{Eqp-Lapla} is pre-compact and therefore we get the existence of a uniform limit (as stated in Theorem \ref{MThmLim1}).

\begin{lemma}[{\bf Existence of limit solutions}]\label{LemExistSol} Let $\{u_p\}_{p>2}$ be a sequence of weak solutions to \eqref{Eqp-Lapla}. Then, there exists a subsequence $p_j \to \infty$ and a limit function $u_{\infty}$ such that
$$
   \displaystyle \lim_{p_j \to \infty} u_{p_j}(x) = u_{\infty}(x)
$$
uniformly in $\Omega$. Moreover, $u_{\infty}$ is Lipschitz continuous with
$$
    [u_{\infty}]_{\text{Lip}(\overline{\Omega})} \leq \limsup_{p_j \to \infty} \mathfrak{C}(N, p_j, \Omega)\|\nabla u_{p_j}\|_{L^{p_j}(\Omega)} \leq \mathfrak{C}(N)\max\left\{1, [g]_{\text{Lip}(\partial \Omega)}\right\}.
$$
\end{lemma}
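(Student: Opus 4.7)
The plan is twofold: extract a uniformly convergent subsequence via Arzel\`a--Ascoli, and then quantitatively pass to the limit in Morrey's inequality in order to identify the sharp Lipschitz bound. First, fixing any exponent $p_0>N$, Lemma \ref{Lemma2.4} provides, for every $p\geq p_0$, the $L^\infty$ bound $\|u_p\|_{L^\infty(\Omega)}\leq \|g\|_{L^\infty(\Omega)}$ together with a $C^{0,1-N/p}$ estimate whose constant $C_2$ is independent of $p$. For any fixed $\beta\in(0,1)$ and all $p$ large enough that $1-N/p>\beta$, I would use $|x-y|^{1-N/p}\leq \mathrm{diam}(\Omega)^{1-N/p-\beta}\,|x-y|^\beta$ to conclude that the family $\{u_p\}$ is uniformly bounded in $C^{0,\beta}(\overline{\Omega})$. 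Arzel\`a--Ascoli then delivers a subsequence $u_{p_j}\to u_\infty$ uniformly on $\overline{\Omega}$.

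Next, for the quantitative Lipschitz control I would apply Theorem \ref{MorIneq} to each $u_{p_j}$, obtaining
\[
|u_{p_j}(x)-u_{p_j}(y)|\;\leq\; \mathfrak{C}(N,p_j)\,\|\nabla u_{p_j}\|_{L^{p_j}(\Omega)}\,|x-y|^{1-N/p_j},\qquad x,y\in\overline{\Omega}.
\]
Fixing $x\ne y$ and letting $j\to\infty$, uniform convergence of the left-hand side together with $\lim_{j\to\infty}|x-y|^{1-N/p_j}=|x-y|$ yields
\[
|u_\infty(x)-u_\infty(y)|\;\leq\; \Bigl(\limsup_{j\to\infty}\mathfrak{C}(N,p_j)\,\|\nabla u_{p_j}\|_{L^{p_j}(\Omega)}\Bigr)\,|x-y|.
\]

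It remains to identify the constant. Using the explicit form of $\mathfrak{C}(N,p)$ recorded right after Theorem \ref{MorIneq}, the factors $|\partial B_1|^{-1/p}$ and $((p-1)/(p-N))^{(p-1)/p}$ both tend to $1$ as $p\to\infty$, hence $\mathfrak{C}(N,p)\to 2\mathfrak{c}(N)$. For the gradient norms, I would exploit the variational characterization $\mathfrak{J}_p[u_p]\leq \mathfrak{J}_p[g]$ (with $g$ extended to a Lipschitz function on $\overline{\Omega}$) combined with $\|u_p\|_{L^\infty}\leq\|g\|_{L^\infty}$ to bound
\[
\int_\Omega |\nabla u_p|^p\,dx\;\leq\; \int_\Omega |\nabla g|^p\,dx+p\,\|\lambda_0\|_{L^\infty(\Omega)}\|g\|_{L^\infty(\Omega)}|\Omega|.
\]
Taking $p$-th roots and invoking the classical fact $\|h\|_{L^p(\Omega)}\to \|h\|_{L^\infty(\Omega)}$ for bounded $h$, together with the general identity $\lim_p(a_p+b_p)^{1/p}=\max\{\lim a_p^{1/p},\lim b_p^{1/p}\}$, the right-hand side is controlled by $\max\{1,\,[g]_{\mathrm{Lip}(\overline{\Omega})}\}$, which in turn is bounded by a dimensional multiple of $\max\{1,\,[g]_{\mathrm{Lip}(\partial\Omega)}\}$ after a standard Lipschitz extension of $g$. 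Combining these contributions yields the claim with $\mathfrak{C}(N)=2\mathfrak{c}(N)$ (absorbing the extension constant).

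The main obstacle I anticipate is tracking the constants carefully in the double limit $p\to\infty$: one must simultaneously certify that the Morrey constant $\mathfrak{C}(N,p)$ stays bounded rather than blowing up, and that the gradient energy $\|\nabla u_p\|_{L^p}$ does not pick up a spurious $p$-dependent contribution from the forcing term. The interplay between the two competing contributions $\int_\Omega|\nabla g|^p\,dx$ and $p\,\|\lambda_0\|_\infty\|g\|_\infty|\Omega|$ is precisely what produces, through the identity above for $\lim (a_p+b_p)^{1/p}$, the $\max\{1,[g]_{\mathrm{Lip}}\}$ structure in the final estimate.
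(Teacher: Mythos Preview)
Your proposal is correct and follows essentially the same route as the paper: Arzel\`a--Ascoli compactness via the uniform H\"older estimates of Lemma~\ref{Lemma2.4}, followed by passing to the limit in Morrey's inequality to obtain the Lipschitz bound. The paper's own proof is in fact much terser (two sentences), whereas you have carefully spelled out the constant tracking---in particular the explicit limit $\mathfrak{C}(N,p)\to 2\mathfrak{c}(N)$ and the handling of the extra factor of $p$ coming from the minimality inequality $\mathfrak{J}_p[u_p]\le\mathfrak{J}_p[g]$---which the paper leaves implicit.
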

\begin{proof} Existence of a uniform limit,
$u_{\infty}$, is a direct consequence of our estimates in Lemma \ref{Lemma2.4} using with an Arzel\`{a}-Ascoli compactness criteria. Finally, the last statement holds by passing to the limit in the H\"{o}lder's estimates from Lemma \ref{Lemma2.4}.
\end{proof}

Next, we will show that any uniform limit, $u_\infty$, is a viscosity solution to the limit equation.

\begin{proof}[{\bf Proof of Theorem \ref{MThmLim1}}]
Notice that from the uniform convergence, it holds that $u_{\infty} = g$ on $\partial \Omega$. Next, we  prove that the limit function $u_{\infty}$  is a viscosity solution to
$$
  \max\left\{-\Delta_{\infty} u_{\infty}(x), -|\nabla u_{\infty}(x)|+ \chi_{\{u_{\infty}>0\}}(x)\right\} = 0 \quad \text{in} \quad \Omega.
$$
First, let us prove that $u_{\infty}$ is a viscosity supersolution.
To this end, fix $x_0 \in \{u_{\infty}>0\} \cap \Omega$ and let  $\phi \in C^2(\Omega)$ be a test function such that $u_{\infty}(x_0) = \phi(x_0)$ and the inequality $u_{\infty}(x) > \phi(x)$ holds for all $x \neq x_0$. We want to show that
$$
  - \Delta_{\infty} \phi(x_0) \geq 0 \quad \text{or} \quad -|\nabla \phi(x_0)|+ \chi_{\{u_{\infty}\geq0\}}(x_0) \geq 0.
$$
Notice that if $-|\nabla \phi(x_0)|+ \chi_{\{u_{\infty}\geq0\}}(x_0) \geq 0$ there is nothing to prove. Hence, we may assume that
\begin{equation}\label{eq5.1}
  -|\nabla \phi(x_0)|+\chi_{\{u_{\infty}\geq0\}}(x_0)<0.
\end{equation}
Since, up to a subsequence,  $u_p \to u_{\infty}$ uniformly, there exists a sequence $x_p \to x_0$ such that $x_{p} \to x_0$ such that $u_{p}-\phi$ has a local minimum at $x_{p}$. Since $u_{p}$ is a weak supersolution (and then a viscosity supersolution by Lemma \ref{EquivSols}) to \eqref{Eqp-Lapla} we get
$$
  -\left[|\nabla \phi(x_{p})|^{p-2}\Delta \phi(x_{p}) + (p-2)|\nabla \phi(x_{p})|^{p-4}\Delta_{\infty} \phi(x_{p})\right] \geq -\lambda_0(x_p)\chi_{\{\phi\geq 0\}}(x_p).
$$
Now, dividing both sides by $(p-2)|\nabla \phi(x_{p})|^{p-4}$ (which is not  zero for $p\gg1$ due to \eqref{eq5.1}) we get
$$
  - \Delta_{\infty} \phi(x_{p}) \geq  \frac{|\nabla \phi(x_{p})|^2 \Delta \phi(x_{p})}{p-2} -\left( \frac{\sqrt[p-4]{\lambda_0(x_p)\chi_{\{\phi\geq 0\}}(x_p)}}{|\nabla \phi(x_{p})|}\right)^{p-4}.
$$
Passing the limit as $p \to \infty$ in the above inequality we conclude that
$$
- \Delta_{\infty} \phi(x_0) \geq 0,
$$
which proves that $u_{\infty}$ is a viscosity supersolution.

Now, let us show that $u_{\infty}$ is a viscosity subsolution. To this end, fix $x_0 \in \{u_{\infty}>0\} \cap \Omega$ and a test function $\phi \in C^2(\Omega)$ such that $u_{\infty}(x_0) = \phi(x_0)$ and the inequality $u_{\infty}(x) < \phi(x)$ holds for $x \neq x_0$. We want to prove that
\begin{equation}\label{eq5.2}
  - \Delta_{\infty} \phi(x_0) \leq 0 \quad \text{and} \quad -|\nabla \phi(x_0)|+\chi_{\{u_{\infty}\geq0\}}(x_0) \leq 0.
\end{equation}
One more time, there exists a sequence $x_{p} \to x_0$ such that $u_{p}-\phi$ has a local maximum at $x_{p}$ and since $u_{p}$ is a weak sub-solution (resp. viscosity sub-solution) to \eqref{Eqp-Lapla}, we have that
$$
 - \frac{|\nabla \phi(x_{p})|^2 \Delta \phi(x_{p})}{p-2} - \Delta_{\infty} \phi(x_{p}) \leq  -\left( \frac{\sqrt[p-4]{\lambda_0(x_p)\chi_{\{u_{\infty}\geq0\}}(x_p)}}{|\nabla \phi(x_{p})|}\right)^{p-4} \leq 0.
$$
Thus, letting $p \to \infty$ we obtain $- \Delta_{\infty} \phi(x_0) \leq 0$. Furthermore, if  $-|\nabla \phi(x_0)|+ 1.\chi_{\{u_{\infty}\geq0\}}(x_0) > 0$, as $p \to \infty$, then the right hand side diverges to $-\infty$, giving a contradiction.
Therefore \eqref{eq5.2} holds.

Next, let us establish the limit equation in the null set. To this end, fix $x_0 \in \Omega \cap \{u_{\infty} = 0\}$ and $\phi \in C^2(\Omega)$ such that $u_{\infty}(x_0) = \phi(x_0)=0$ and $u_{\infty}(x) < \phi(x)$ holds for $x \neq x_0$. As before, there exists a sequence $x_{p} \to x_0$ such that $u_{p}-\phi$ has a local minimum at $x_{p}$.
We consider two cases:

\begin{enumerate}
  \item[Case 1:] $u_{p_k}(x_{p_k}) \leq 0$ for a subsequence $(p_k)_{k\geq 1}$.
In this case, since $u_{p_k}$ is a weak super-solution (resp. viscosity super-solution) to \eqref{Eqp-Lapla}, we obtain after passing to the limit as $p_k \to \infty$ that $-\Delta_{\infty} \phi(x_0) \geq 0$.

  \item[Case 2:] $u_{p_k}(x_{p_k}) > 0$ for a subsequence $(p_k)_{k\geq 1}$.
In this case, since $u_{p_k}$ is a weak super-solution (resp. viscosity super-solution) to \eqref{Eqp-Lapla}, we have that
$$
   - \Delta_{p_k} \phi(x_{p_k}) \geq \lambda_0(x_{p_k}).
$$
As in the first part of this proof, we obtain after passing the limit as $p_k \to \infty$ that
$$
  -\Delta_{\infty} \phi(x_0) \geq 0 \quad \text{or} -|\nabla \phi(x_0)| + 1\geq 0
$$
\end{enumerate}

In both cases, we conclude that
$$
  \max\left\{-\Delta_{\infty} \phi(x_0), -|\nabla \phi(x_0)|+ \chi_{\{u\geq 0\}}\right\} \geq 0,
$$
which assures that $u_{\infty}$ is a viscosity super-solution to \eqref{EqLim} in its null set.

Now, fix $x_0 \in \Omega \cap \{u_{\infty} = 0\}$ and $\phi \in C^2(\Omega)$ such that $u_{\infty}(x_0) = \phi(x_0)=0$ and $u_{\infty}(x) > \phi(x)$ holds for $x \neq x_0$. One more time, there exists a sequence $x_{p} \to x_0$ such that $u_{p}-\phi$ has a local maximum at $x_{p}$. As before, let us consider two possibilities:

\begin{enumerate}
  \item[Case 1:] $u_{p_k}(x_{p_k}) \leq 0$ for a subsequence $(p_k)_{k\geq 1}$.
In this case, since $u_{p_k}$ is a weak sub-solution (resp. viscosity sub-solution) to \eqref{Eqp-Lapla}, we obtain
$-\Delta_{\infty} \phi(x_0) \leq 0$. Moreover, we also have $-|\nabla \phi(x_0)|+ \chi_{\{u> 0\}}
= -|\nabla \phi(x_0)| \leq 0$.

  \item[Case 2:] $u_{p_k}(x_{p_k}) > 0$ for an infinity subsequence $(p_k)_{k\geq 1}$.
In this case, since $u_{p_k}$ is a weak sub-solution (resp. viscosity sub-solution) to \eqref{Eqp-Lapla}, we have that
$$
   - \Delta_{p_k} \phi(x_{p_k}) \leq \lambda_0(x_{p_k}).
$$
Once again, we obtain after passing to the limit as $p_k \to \infty$,
$$
  -\Delta_{\infty} \phi(x_0) \leq 0 \quad \text{and} -|\nabla \phi(x_0)|+ 1\leq 0
$$
\end{enumerate}

Therefore, in any of the two cases, we conclude that
$$
  \max\left\{-\Delta_{\infty} \phi(x_0), -|\nabla \phi(x_0)|+ \chi_{\{u> 0\}}\right\} \leq 0,
$$
which shows that $u_{\infty}$ is a viscosity sub-solution to \eqref{EqLim} in its null set.

Finally, to prove that $u_{\infty}$ is $\infty-$harmonic in its negativity set is a standard task and the reasoning is similar to one employed in \cite[Theorem 1]{RT}, \cite[page 384]{RTU} and \cite[Theorem 1.1]{RW}. We omit the details here.
\end{proof}

\begin{proof}[{\bf Proof of Theorem \ref{MThmLim2}}]
 Any sequence of weak solutions $(u_p)_{p\geq 2}$ converge, up to a subsequence, to a limit, $u_{\infty}$,  uniformly in $\Omega$. From Theorem \ref{LGR} we have that
$$
   \displaystyle \sup_{B_r(x_0)} \,u(x) \geq  \mathfrak{C}_{0} r^{\frac{p}{p-1}} \qquad \text{with} \quad  \displaystyle \mathfrak{C}_{0} \defeq \frac{p-1}{p}\left(\frac{ \inf_{\Omega}\lambda_0(x)}{ N}\right)^{\frac{1}{p-1}}.
$$
As before for $\hat{x} \in \overline{\{u_{\infty}>0\}} \cap \Omega^{\prime}$ there exist $x_p \to \hat{x}$ with $x_p \in \overline{\{u_p>0\}} \cap \Omega^{\prime}$. Hence we get,
$$
 \displaystyle \sup_{B_r(x_0)} u_{\infty}(x)  = \lim_{p \to \infty} \sup_{B_r(x_p)} u_p(x) \geq  r.
$$
\end{proof}

As a by-product of previous estimates we prove that any limit solution to \eqref{EqLim} is, near the free boundary,  ``trapped'' between the graph of two multiples of $\dist(\cdot, \partial \{u>0\})$.

\begin{corollary} \label{coro62}
 Let $u_{\infty}$ be a uniform limit to normalized solutions $u_p$ of \eqref{Eqp-Lapla} and $\Omega^{\prime} \Subset \Omega$. Then, for any $x_0 \in \{u_{\infty}>0\} \cap \Omega^{\prime}$ the following estimate holds:
$$
 \mathfrak{C}_1(N)\dist(x_0, \partial \{u>0\}) \leq  u_{\infty}(x_0) \leq \mathfrak{C}_2(N)\dist(x_0, \partial \{u>0\}).
$$
\end{corollary}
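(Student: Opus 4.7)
The statement splits into two one-sided bounds that I would treat separately, piggy-backing on the two results that precede it: the Lipschitz regularity of $u_\infty$ from Theorem~\ref{MThmLim1} and the sharp non-degeneracy of Theorem~\ref{MThmLim2}. Throughout I would use the fact that, since $u_\infty$ is continuous and non-negative on $\{u_\infty>0\}$, it must vanish on $\partial\{u_\infty>0\}$: any boundary point is a limit both of points where $u_\infty>0$ and of points where $u_\infty\le 0$, so by continuity $u_\infty=0$ there.

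\emph{Upper bound.} This is immediate from the Lipschitz regularity. Given $x_0 \in \{u_\infty>0\}\cap\Omega'$, pick $y_0\in\partial\{u_\infty>0\}$ realizing $|x_0-y_0|=\dist(x_0,\partial\{u_\infty>0\})$. Then
\[
u_\infty(x_0) \;=\; u_\infty(x_0)-u_\infty(y_0) \;\leq\; [u_\infty]_{\Lip(\overline\Omega)}\,\dist\bigl(x_0,\partial\{u_\infty>0\}\bigr),
\]
which gives the right-hand inequality with $\mathfrak{C}_2 = [u_\infty]_{\Lip(\overline\Omega)}$.

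\emph{Lower bound.} For this I would first observe that the proof of Theorem~\ref{MThmLim2} extends verbatim from free-boundary points to any $\hat x \in \overline{\{u_\infty>0\}}\cap\Omega'$: it only needs a sequence $x_p\in\overline{\{u_p>0\}}$ with $x_p\to\hat x$, available from the uniform convergence. Applying this extended estimate at $x_0$ with radius $d := \dist(x_0,\partial\{u_\infty>0\})$ gives $z_0\in\overline{B_d(x_0)}$ with $u_\infty(z_0)\geq d$. Coupling with the Lipschitz bound again,
\[
u_\infty(x_0) \;\geq\; u_\infty(z_0) - [u_\infty]_{\Lip(\overline\Omega)}|z_0-x_0| \;\geq\; \bigl(1-[u_\infty]_{\Lip(\overline\Omega)}\bigr)\, d,
\]
which produces a constant $\mathfrak{C}_1>0$ whenever the Lipschitz seminorm is strictly less than one. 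The main obstacle is exactly this last point: to obtain a dimensional $\mathfrak{C}_1(N)$ when $[u_\infty]_{\Lip}\ge 1$, the naive coupling above degenerates. I would handle this either (i) by passing to the limit in the sharp growth of Corollary~\ref{CorNonDeg} along a sequence $x_p\to x_0$, tracking how the rescaled constant $\mathfrak{C}_\sharp^{(p-1)/p}$ and the exponent $p/(p-1)$ behave as $p\to\infty$; or (ii) by exploiting the gradient constraint directly: in $\{u_\infty>0\}$, $u_\infty$ is a viscosity subsolution of $|\nabla u|\ge 1$, so upside-down cones of slope strictly less than one cannot touch $u_\infty$ from above at interior points of the positive set, and a cone-comparison with apex at the nearest free-boundary point $y_0$ forces $u_\infty(x_0)\ge (1-\varepsilon)d$ for every $\varepsilon>0$. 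Approach (ii) is the more robust one and is where the delicate viscosity-argument bookkeeping (treating the corner of the cone at its apex via a smooth approximation and applying the super/sub-solution tests away from the corner) carries all the weight of the proof.
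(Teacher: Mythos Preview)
Your upper bound is correct and matches the paper's one-line appeal to Lipschitz regularity.

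The lower bound, however, has a genuine gap in approach (ii), which you single out as the robust route. The subsolution half of \eqref{EqLim} says that any $C^2$ function $\phi$ touching $u_\infty$ \emph{from above} at a point of $\{u_\infty>0\}$ must have $|\nabla\phi|\ge 1$. This indeed forbids cones of slope $<1$ from touching from above, but that information goes the wrong way: it yields outward growth estimates of the type $\sup_{\partial B_r(x_0)}u_\infty\ge u_\infty(x_0)+r$ (for $B_r(x_0)\subset\{u_\infty>0\}$), not a lower bound on $u_\infty(x_0)$ in terms of $d$. To force $u_\infty(x_0)\ge(1-\varepsilon)d$ by comparison with a cone of slope $1-\varepsilon$ and apex $y_0$, that cone would have to touch $u_\infty$ \emph{from below} at an interior point, and the supersolution test in \eqref{EqLim} only gives the disjunction ``$-\Delta_\infty\phi\ge 0$ or $|\nabla\phi|\le 1$''; since the cone already has $|\nabla\phi|=1-\varepsilon<1$ (and is $\infty$-harmonic away from its apex), no contradiction arises. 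So the cone-comparison step does not close.

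Approach (i) is viable and is in spirit what the paper does, though executed differently. Rather than passing to the limit in the non-explicit constant $\mathfrak{C}_\sharp$ of Corollary~\ref{CorNonDeg}, the paper reruns that corollary's contradiction/rescaling argument directly at the limit level: assuming a sequence $x_k\in\{u_\infty>0\}\cap\Omega'$ with $u_\infty(x_k)\le k^{-1}d_k$ and $d_k\to 0$, it sets $v_k(y)=u_\infty(x_k+d_ky)/d_k$, uses the uniform H\"older bounds inherited from the $u_p$'s to show $\sup_{B_{1/2}}v_k\to 0$, and contradicts this with Theorem~\ref{MThmLim2} applied at $x_k$ (whose proof, though stated for free-boundary points, works on all of $\overline{\{u_\infty>0\}}$, as you noted). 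If you prefer to push your version of (i) through, you must verify that $\mathfrak{C}_\sharp=\mathfrak{C}_\sharp(p)$ stays bounded away from zero as $p\to\infty$ (the proof of Corollary~\ref{CorNonDeg} is by contradiction, so this is not automatic) and record the easy one-sided stability $\liminf_{p}\dist(x_0,\partial\{u_p>0\})\ge d$.
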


\begin{proof}
The upper bound for $u_\infty(x_0)$ is a consequence of Lipschitz regularity. For the remaining inequality, let us suppose, for sake of contradiction that such lower bound does not hold. Then, it exists
a sequence $x_k \in \{u_{\infty}>0\} \cap \Omega^{\prime}$ such that
$$
d_k \defeq \dist(x_k, \partial \{u_{\infty}>0\} \cap \Omega^{\prime}) \to 0 \quad \text{as} \quad k \to \infty \quad \text{and} \quad u(x_k) \leq k^{-1}d_k.
$$
Now, define the auxiliary function $v_k:B_1 \to \R$ by
$$
   v_k(y) \defeq \frac{u_{\infty}(x_k+d_ky)}{d_k}.
$$
From uniform convergence $v_{k, p} \to v_k$ locally uniformly as $k \to \infty$, where
$$
   v_{k, p}(y) \defeq \frac{u_{p}(x_k+d_ky)}{d_k^{\frac{p}{p-1}}}.
$$
Thus,
$$
   -\Delta_p v_{k, p}(y) + \lambda_0(x_k+d_ky)\chi_{\{v_{k, p}>0\}}(y)=0 \quad \text{in} \quad B_1
$$
in the weak sense. From classical regularity estimates $v_{k, p}$ is H\"{o}lder continuous (see Theorem \ref{MorIneq}). After passing to the limit as $k \to \infty$ we infer that $v_k$ is $\alpha-$H\"{o}lder continuous for any $\alpha \in (0, 1)$.
Hence,
$$
  v_k(y) \leq C|y-0|^{\alpha}+v_k(0) \leq \max\{1, C\}\left(d_k^{\alpha} + \frac{1}{k}\right)\,\,\, \forall\,\, y \in B_1.
$$
Finally, from Theorem \ref{MThmLim2}, we obtain
$$
  \displaystyle 0<\frac{1}{2} \leq \frac{1}{d_k}\sup_{B_{\frac{d_k}{2}}(x_k)} u_{\infty}(x) = \sup_{B_{\frac{1}{2}}} v_k(z) \leq \max\{1, C\}\left(d_k^{\alpha} + \frac{1}{k}\right) \to 0 \quad \text{as} \quad k \to \infty.
$$
This contradiction finishes the proof.
\end{proof}

\section{Uniqueness for the limit problem}\label{Uniqu}

Our main goal throughout this section is to show uniqueness of viscosity solutions to
\begin{equation} \label{pppp}
\left\{
  \begin{array}{rclcl}
    \max\left\{ -\Delta_{\infty} \, u_{\infty}(x), \chi_{\{u_{\infty}>0\}}(x) -|\nabla u_{\infty}(x)| \right\} & = & 0 & \mbox{in}&  \Omega\\
    u_{\infty}(x) & = & g(x) & \mbox{on} & \partial \Omega.
  \end{array}
\right.
\end{equation}
Remind that existence of a solution $u_\infty$ was obtained as the uniform limit (along subsequences) of solutions to $p-$Laplacian problems \eqref{Eqp-Lapla}, see Theorem \ref{MThmLim1} for more details. Next, we will deliver the proof of Theorem \ref{teo.unicidad}, which is based on \cite[Section 4]{JPR}. For this reason, we will only include some details.

\begin{proof}[{\bf Proof of Theorem \ref{teo.unicidad}}]
To prove such a result we first construct a function $v$ and then show that any possible viscosity solution to \eqref{pppp} coincides with $v$. To construct such an special $v$ we first consider $\mathfrak{h}$ the unique (see \cite{J}) viscosity solution to
\begin{equation} \label{pppp.h}
\left\{
  \begin{array}{rclcl}
   -\Delta_{\infty} \, \mathfrak{h} (x) & = & 0 & \mbox{in}&  \Omega\\
    \mathfrak{h}(x) & = & g(x) & \mbox{on} & \partial \Omega.
  \end{array}
\right.
\end{equation}
Then, let $z$ be the unique viscosity solution to
\begin{equation} \label{pppp.z}
\left\{
  \begin{array}{rclcl}
    \max\left\{ -\Delta_{\infty} \, z (x), 1 -|\nabla z (x)| \right\} & = & 0 & \mbox{in}&  \Omega\\
    z (x) & = & g(x) & \mbox{on} & \partial \Omega.
  \end{array}
\right.
\end{equation}
Remark that for this problem we have uniqueness, as well as validity of a comparison principle, see
\cite[Theorem 4.5]{JPR}. Hence, we have
$$
z(x) \leq u_\infty (x) \leq \mathfrak{h}(x) \qquad \forall \,\, x \in \Omega.
$$
Moreover, from \cite[Theorem 4.2]{JPR}, we have
$$
z(x) = u_\infty (x) = \mathfrak{h}(x) \qquad \text{in} \quad  \{ x\in \Omega \, : \, \nabla \mathfrak{h} (x) \geq 1 \}.
$$
Now, we modify $z$ in the set $\{x\in \Omega \, : \, z (x) < 0 \}$ to obtain
the function $v$ as follows: let $w$ be the solution to
\begin{equation} \label{pppp.w}
\left\{
  \begin{array}{rclcl}
   -\Delta_{\infty} \, w (x) & = & 0 & \mbox{in}& \{x\in \Omega \, : \, z (x) < 0 \} \\
    w(x) & = & z(x) & \mbox{on} & \partial \{x\in \Omega \, : \, z (x) < 0 \}.
  \end{array}
\right.
\end{equation}
and then we set
\begin{equation}\label{eq.v}
v(x) = \left\{
\begin{array}{ll}
z(x) \qquad & \mbox{for } \{x\in \Omega \, : \, z (x) \geq  0 \}, \\
w(x) \qquad & \mbox{for } \{x\in \Omega \, : \, z (x) <  0 \}.
\end{array}
\right.
\end{equation}
Remark that this function $v$ is uniquely determined by the boundary datum $g$ since all
the involved PDE problems have uniqueness. Moreover, since we have a comparison
principle for the involved PDE problems then we have a comparison principle for $v$, that is,
if $g_1 \leq g_2$ on $\partial \Omega$, then the corresponding functions $v_1$ and $v_2$ verify
$$
v_1 (x) \leq v_2 (x) , \qquad \mbox{in } \Omega.
$$

Now our aim is to show that
$$
   u_\infty = v, \qquad \mbox{ in } \Omega.
$$
Firstly, let us show that $u_\infty =z=v$ in the set $ \{x\in \Omega \, : \, z (x) \geq  0 \}$. To this end, we observe that in the set $ \{ x\in \Omega \, : \, \nabla \mathfrak{h} (x) \geq 1 \}$ we have $
z(x) = u_\infty (x) = \mathfrak{h}(x) $. Hence, we have to deal with $ \{x\in \Omega \, : \, z (x) \geq  0 \mbox{ and } \nabla \mathfrak{h} (x) < 1 \}$.
Now, as in \cite[Theorem 4.2]{JPR}, we argue by contradiction and suppose that there is $\hat{x} \in  \{x\in \Omega \, : \, z (x) \geq  0 \mbox{ and } \nabla \mathfrak{h}(x) < 1 \}$ such that
$u_\infty(\hat x)-z(\hat x)>0$. If $u_\infty$ were smooth, we would have $|\nabla u_\infty(\hat
x)|\geq 1$ by the second part of the equation, and from
$\Delta_\infty u_\infty\ge 0$ it would follow that $t\mapsto |\nabla u_\infty(\gamma(t))|$
is non-decreasing along the curve $\gamma$ for which $\gamma(0)=\hat x$
and $\dot{\gamma}(t)=\nabla u_\infty(\gamma(t))$. Using this information and the fact that $|z(x)-z(y)|\le
|x-y|$ in $\{\nabla \mathfrak{h} < 1\}$, we could then follow $\gamma$ up to the boundary
to find a point $y$ where $u_\infty(y)>z(y)$; but this is a contradiction since
$u_\infty$ and $z$ coincide on $\partial \Omega$.

To overcome the lack of smoothness to $u_\infty$ and to justify rigorously the steps outlined above, we use an approximation procedure with the sup-convolution. Let $\delta>0$ and
\[
(u_\infty)_\delta (x)=\sup_{y\in\Omega} \left\{
u_\infty(y)-\frac1{2\delta} |x-y|^2\right\}
\]
be the standard sup-convolution of $u_\infty$. Observe that since $u_\infty$ is bounded in $\Omega$,
we in fact have
\[
(u_\infty)_\delta(x)=\sup_{y\in B_{R(\delta)}(x)} \left\{
u_\infty (y)-\frac1{2\delta} |x-y|^2\right\}
\]
with $R(\delta)=2\sqrt{\delta\|u_\infty \|_{L^\infty(\Omega)}}$. We
assume that $\delta>0$ is small.
In what follows we will use the notation
\begin{equation*}
 L(f, x) := \lim_{r \to +0} \mathrm{Lip}(f, B_r(x))
\end{equation*}
for the point-wise Lipschitz constant of a function $f$.
Next we observe that since $u_\infty$ is a solution to
\eqref{pppp}, it follows that $\Delta_\infty
(u_\infty)_\delta\ge 0$ and $|\nabla (u_\infty)_\delta|-\chi_{(u_\infty)_\delta > 0}\ge 0$.
In particular, since $(u_\infty)_\delta$ is semi-convex, there
exists $x_0$ such that $$(u_\infty)_\delta(x_0)-z(x_0) > \sup\limits_{x\in
\partial\Omega} ((u_\infty)_\delta-z),$$
and
$$
|\nabla (u_\infty)_\delta(x_0)|=L(u_\delta,x_0)\ge
1.
$$
 Now let $r_0=\frac12\mbox{dist}(x_0,\partial\Omega)$ and
let $x_1\in \partial B_{r_0}(x_0)$ be a point such that
\[
\max_{y\in \overline B_{r_0}(x_0)} (u_\infty)_\delta(y)=(u_\infty)_\delta(x_1).
\]
Since $\Delta_\infty (u_\infty)_\delta\ge 0$, the increasing slope estimate, see \cite{ceg}, implies
\[
1\le L((u_\infty)_\delta,x_0) \le L((u_\infty)_\delta,x_1)\quad \text{and}\quad (u_\infty)_\delta(x_1)
\ge (u_\infty)_\delta(x_0)+ |x_0-x_1|.
\]
By defining $r_1=\frac12\dist(x_1,\partial\Omega)$,
choosing $x_2\in \partial B_{r_1}(x_1)$ so that
$$\max\limits_{y\in \overline B_{r_1}(x_1)}
(u_\infty)_\delta(y)=(u_\infty)_\delta(x_2),$$ and using the increasing slope
estimate again yields
\[
1\le L((u_\infty)_\delta,x_0) \le L((u_\infty)_\delta,x_1)\le L((u_\infty)_\delta,x_2)
\]
and
\[
(u_\infty)_\delta(x_2) \ge  (u_\infty)_\delta(x_1)+ |x_1-x_2| \ge
(u_\infty)_\delta(x_0)+ |x_0-x_1|+ |x_1-x_2|.
\]
Repeating this construction we obtain a sequence $(x_k)$ such that
$x_k\to a\in \partial  \{x\in \Omega \, : \, z (x) \geq  0 \mbox{ and }
\nabla \mathfrak{h} (x) < 1 \} \cap \partial \Omega $
as $k\to\infty$ and
\[
(u_\infty)_\delta(x_k)\ge (u_\infty)_\delta(x_0)+\sum_{j=0}^{k-1} |x_j-x_{j+1}|
\quad\text{for $k=1,2,\ldots$}
\]
On the other hand, since $|z(x)-z(y)|\le |x-y|$ whenever the line segment
$[x,y]$ is contained in $\{\nabla \mathfrak{h} \leq 1\}$ (see \cite{cgw}),
we have
\[
z(x_k) \le z(x_0)+\sum_{j=0}^{k-1} |x_j-x_{j+1}|.
\]
Thus, by continuity,
\[
(u_\infty)_\delta(a)-z(a) =\lim_{k\to\infty} (u_\infty)_\delta(x_k)-z(x_k) \ge (u_\infty)_\delta(x_0)-z(x_0) >
\sup\limits_{x\in \partial\Omega} ((u_\infty)_\delta-z),
\]
which is clearly a contradiction. Therefore, we conclude that $u_\infty =z=v$ in the set $ \{x\in \Omega \, : \, z (x) \geq  0 \}$.

To extend the equality $u_\infty = v$ to the set $ \{x\in \Omega \, : \, z (x) <  0 \}$
we just observe that $-\Delta_\infty v = 0$ there and also that $-\Delta_\infty u_\infty =0$ since
$u_\infty \leq 0$ on the boundary of $ \{x\in \Omega \, : \, z (x) <  0 \}$ and then $u_\infty \leq 0$
in the set $ \{x\in \Omega \, : \, z (x) <  0 \}$ (notice that if $u_\infty =0$ there then trivially
$-\Delta_\infty u_\infty =0$). Therefore, we conclude that
$$ u_\infty = v$$
in the whole $\Omega$.
\end{proof}

\begin{remark} {\rm From the previous proof we have that the positivity sets of $u_\infty$ and $z$ coincide. The function $z$ can be computed as follows (see \cite[Section 2.2]{JPR}):
Since $\mathfrak{h}$ is everywhere differentiable, see \cite{es}, and $|\nabla \mathfrak{h}(x)|$ equals to the point-wise Lipschitz constant of $\mathfrak{h}$,
\begin{equation*}
 L(\mathfrak{h}, x) \defeq \lim_{r \to +0} \mathrm{Lip}(\mathfrak{h}, B_r(x))
\end{equation*}
for every $x\in\Omega$, using that the map $x\mapsto L(h,x)$ is upper semicontinuous, see e.g.\ \cite{ceg},
we have that the set
$$
  V \defeq \{x \in \Omega\colon |\nabla \mathfrak{h}(x)| < 1\}
$$
is an open subset of $\Omega$. Now, define the ''patched function``
$z \colon\overline\Omega \to\R$ by first setting
$$
z=\mathfrak{h}\quad \mbox{ in }\overline\Omega\setminus V,
$$
and then, for each connected component $U$ of $V$ and $x\in
U$, we let
$$
 z(x)
  = \sup_{y \in \partial U}
\left(
 \mathfrak{h}(y) - d_{U} (x, y)
\right),
$$
where $d_{U} (x, y)$ stands for the (interior) distance between $x$ and $y$ in $U$.
}
\end{remark}

\section{Games: Pay or Leave Tug-of-War}\label{sec:games}

In this section, we consider a variant of the Tug-of-War games introduced in \cite{PSSW} and \cite{JPR}. Let us describe the two-player zero-sum game that we call \textit{Pay or Leave Tug-of-War}.

Let $\Om$ be a bounded open set and $\eps>0$. A token is placed at $x_0\in\Om$.
Player~II, the player seeking to minimize the final payoff, can either pass the turn to Player~I
or decide to toss a fair coin and play Tug-of-War. In this case, the winner of the coin toss gets to move the token to any $x_1\in B_\eps (x_0)$. If Player II pass the turn to Player~I, then she can
either move the game token to any $x_1\in B_\eps (x_0)$ with the price $-\eps$ or decide to end the game immediately with no payoff for either of the players. After the first round, the game continues from $x_1$ according to the same rules.

This procedure yields a possibly infinite sequence of game states $x_0,x_1,\ldots$ where every $x_k$ is a random variable. If the game is not ended by the rules described above, the game
ends when the token leaves $\Omega$, at this point the token will be in the boundary strip of
width $\eps$ given by
$$
\Gamma_\eps=
\{x\in \R^n \setminus \Om\,:\,\dist(x,\partial \Om)< \eps\}.
$$
We denote by $x_\tau \in \Gamma_\eps$ the first point in the sequence of game states that lies in $\Gamma_\eps$, so that $\tau$ refers to the first time we hit $\Gamma_{\eps}$.

At this time the game ends with the terminal payoff given by $F(x_\tau)$, where
$F:\Gamma_\eps
\to
\R$ is a given Borel measurable continuous \emph{payoff function}. Player I earns $F(x_\tau)$ while Player II earns $-F(x_\tau)$.

A strategy $S_\I$ for Player I is a function defined on the partial histories that gives the next game position $S_\I{\left(x_0,x_1,\ldots,x_k\right)}=x_{k+1}\in  B_\eps(x_k)$
if Player I gets to move the token. Similarly Player II plays according to
a strategy $S_\II$. In addition, we define a decision variable for Player II,
which tells when Player II decides to pass a turn
\[
\begin{split}
\theta_{\II}(x_0,\ldots,x_k)=
\begin{cases}
1,&\trm{Player
II pass a turn,}\\
0,& \trm{otherwise},
\end{cases}
\end{split}
\]
and one for Player I which tells when Player I decides to end the game immediately
\[
\begin{split}
\theta_{\I}(x_0,\ldots,x_k)=
\begin{cases}
1,&\trm{Player
I ends the game,}\\
0,& \trm{otherwise}.
\end{cases}
\end{split}
\]

Given the sequence $x_0,\ldots,x_k$ with $x_k\in\Om$ the game will end immediately when
\[
\theta_{\I}(x_0,\ldots,x_k)=\theta_{\II}(x_0,\ldots,x_k)=1.
\]
Otherwise, the one step transition probabilities will be
$$
\begin{array}{rcl}
  \pi_{S_\I,S_\II,\theta_{\I},\theta_{\II}}(x_0,\ldots,x_k,{A}) & = &
  \displaystyle \big(1-\theta_{\II}(x_0,\ldots,x_k)\big)\frac{1}{2}\Big(
\delta_{S_\I(x_0,\ldots,x_k)}({A})+\delta_{S_\II(x_0,\ldots,x_k)}({A})\Big) \\[10pt]
   & & + \theta_{\II}(x_0,\ldots,x_k)(1-\theta_{\I}(x_0,\ldots,x_k))\delta_{S_\I(x_0,\ldots,x_k)}(A).
\end{array}
$$
By using the Kolmogorov's extension theorem and the one step transition probabilities, we can build a
probability measure $\mathbb{P}^{x_0}_{S_\I,S_\II,\theta_\I,\theta_\II}$ on the
game sequences. The expected payoff, when starting from $x_0$ and
using the strategies $S_\I,S_\II,\theta_\I,\theta_\II$, is
\begin{equation}
\label{eq:defi-expectation}
\begin{split}
&\mathbb{E}_{S_{\I},S_\II,\theta_\I,\theta_\II}^{x_0}\left[F(x_\tau)-\eps\sum_{k=0}^{\tau-1}
\theta_{\II}(x_0,\ldots,x_k)(1-\theta_{\I}(x_0,\ldots,x_k))\right]\\
&=\int_{H^\infty} \Big(F(x_\tau)-\eps
\sum_{k=0}^{\tau-1} \theta_{\II}(x_0,\ldots,x_k)(1-\theta_{\I}(x_0,\ldots,x_k))\Big) \ud
\mathbb{P}^{x_0}_{S_\I,S_\II,\theta_\I,\theta_\II},
\end{split}
\end{equation}
where $F\colon\Gamma_\eps\to\R$ is a given continuous function
prescribing the terminal payoff extended as $F\equiv 0$ in $\Om$.

The \emph{value of the game for Player I} is given by
\[
u_\I(x_0)=\sup_{S_{\I},\theta_\I}\inf_{S_\II,\theta_\II}\,
\mathbb{E}_{S_{\I},S_\II,\theta_\I,\theta_\II}^{x_0}\left[F(x_\tau)-\eps\sum_{i=0}^{\tau-1}
\theta_{\II}(x_0,\ldots,x_k)(1-\theta_{\I}(x_0,\ldots,x_k))\right]
\]
while the \emph{value of the game for Player II} is given by
\[
u_\II(x_0)=\inf_{S_\II,\theta_\II}\sup_{S_{\I},\theta_\I}\,
\mathbb{E}_{S_{\I},S_\II,\theta_\I,\theta_\II}^{x_0}\left[F(x_\tau)-\eps\sum_{i=0}^{\tau-1}
\theta_{\II}(x_0,\ldots,x_k)(1-\theta_{\I}(x_0,\ldots,x_k))\right].
\]
Intuitively, the values $u_\I(x_0)$ and $u_\II(x_0)$ are the best
expected outcomes
 each player can  guarantee when the game starts at
$x_0$. Observe that if the game does not end almost surely, then
the expectation \eqref{eq:defi-expectation} is undefined. In this
case, we define $\mathbb{E}_{S_{\I},S_\II,\theta_\I,\theta_\II}^{x_0}$ to take
value $-\infty$ when evaluating $u_\I(x_0)$ and $+\infty$
when evaluating $u_\II(x_0)$.
If $u_\I= u_\II$, we say that the game has a
value.

\subsection{The game value function and its Dynamic Programming Principle}\label{sec:games.6}

In this section, we prove that the game has a value, i.e., $u \defeq u_\I= u_\II$ and that such a
value function satisfies the Dynamic Programming Principle (DPP) given by
\[
\begin{split}
u(x) = \min \Bigg\{ \frac12  \left(\sup_{y\in  B_\eps (x)}
u(y) + \inf_{y\in  B_\eps (x)} u(y) \right) ;
\max \Bigg\{0;
 \sup_{y\in  B_\eps (x)} u(y) - \eps \Bigg\}\Bigg\}
\end{split}
\]
for $x\in\Om$ and $u(x)=F(x)$ for $x\in\Gamma_\eps$.

Let us see intuitively why this holds.
At each step, with the token in a given $x\in\Omega$, we have that Player II chooses whether to play Tug-of-War or to pass the turn to Player I.
In the first case with probability $\frac12$, Player I gets to move and will try to maximize the
expected outcome; and with probability $\frac12$, Player II gets to move and will try to minimize the
expected outcome. In this case the expected payoff will be
\[
\frac12 \sup_{y\in  B_\eps (x)}u(y) +\frac12 \inf_{y\in  B_\eps (x)} u(y).
\]
On the other hand, if Player II pass the turn to Player I, she will have two options.
To end the game immediately obtaining 0 or to move trying to maximize the
expected outcome by paying $\eps$. Player I will prefer the option that gives the greater payoff, that is, the expected payoff is given by
\[
\max \Bigg\{0;\sup_{y\in  B_\eps (x)} u(y) - \eps \Bigg\}
\]
Finally, Player II will decide between the two possible payoff mentioned here, preferring the one with the minimum payoff.

To prove that the DPP holds for our game, we borrow some ideas from \cite{BPR} and \cite{JPR}.
We choose a path that allows us to make the presentation self-contain.

We define $\Om_\eps=\Om \cup \Gamma_\eps$ and
$u_n:\Om_\eps\to\R$ a sequence of functions.
We define the sequence inductively, let $u_n=g$ on $\Gamma_\eps$, $$u_0=\max_{\Gamma_\eps}F$$ on $\Om$ and
\[
\begin{split}
u_{n+1}(x) = \min \Bigg\{ \frac12  \left(\sup_{B_\eps (x)}
u_n + \inf_{B_\eps (x)} u_n \right) ;
\max \Bigg\{0;
 \sup_{B_\eps (x)} u_n - \eps \Bigg\}\Bigg\}
\end{split}
\]
on $\Om$ for all $n\in\N$.

Let us observe that $u_0\geq u_1$ and, in addition, if $u_{n-1}\geq u_n$, by the recursive definition, we have $u_n\geq u_{n+1}$.
Then, by induction, we obtain that the sequence of functions is a decreasing sequence.
By the definition we have that the sequence is bounded by below by $\min\big\{0,\min_{\Gamma_\eps}F\big\}$.
Hence, $u_n$ converge point-wise to a bounded Borel function $u$.

We want to prove that the limit $u$ satisfy the dynamic programming principle.
We can attempt to do that by passing to the limit in the recursive formula.
Since $u_n$ is a decreasing sequence that converge point-wisely to $u$, we can show that
\[
\inf_{B_\eps (x)} u_n \to \inf_{B_\eps (x)} u.
\]
Although, this convergence is not immediate for the supremum.
This is why, in order to be able to pass to the limit in the recursive formula, we want to show that the sequence converges uniformly. To this end, let us prove an auxiliary lemma.

\begin{lemma}
\label{lem:lambda}
Let $x\in\Omega$, $n\in\N$ and fix $\lambda_1$, $\lambda_2$ and $\delta$ such that
\[
u_n(x)-u_{n+1}(x)\geq \lambda_1,\quad \|u_{n-1}-u_n\|_\infty\leq \lambda_2
\]
and $\delta >0$. Then there exists $y\in B_\eps(x)$ such that
\[
\lambda_2-2\lambda_1+\delta +u_{n-1}(y) \geq \sup_{B_\eps (x)}u_n,
\]
\end{lemma}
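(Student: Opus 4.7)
The plan is to choose $y \in B_\eps(x)$ with $u_{n-1}(y) > M_{n-1}(x) - \delta$, where I write $M_k(z) \defeq \sup_{B_\eps(z)} u_k$ and $m_k(z) \defeq \inf_{B_\eps(z)} u_k$. Since $u_n \leq u_{n-1} \leq u_n + \lambda_2$ pointwise, both $M_{n-1}(x) - M_n(x)$ and $m_{n-1}(x) - m_n(x)$ lie in $[0,\lambda_2]$. With this choice of $y$ the desired inequality reduces to the key estimate
\[
M_{n-1}(x) - M_n(x) \;\geq\; 2\lambda_1 - \lambda_2,
\]
since then $u_{n-1}(y) + \lambda_2 - 2\lambda_1 + \delta > M_{n-1}(x) + \lambda_2 - 2\lambda_1 \geq M_n(x) = \sup_{B_\eps(x)} u_n$.

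A preliminary observation is that $\lambda_1 \leq \lambda_2$. Denoting by $T$ the DPP operator sending $u_k$ to $u_{k+1}$, one checks that whenever $u \leq v \leq u + c$ for a constant $c \geq 0$, then $T(u) \leq T(v) \leq T(u) + c$; this is immediate from the monotonicity of $\sup$, $\inf$, and the map $s \mapsto \max\{0, s - \eps\}$. Applied with $u = u_n$, $v = u_{n-1}$ and $c = \lambda_2$, this yields $\|u_n - u_{n+1}\|_\infty \leq \lambda_2$, and therefore $\lambda_1 \leq u_n(x) - u_{n+1}(x) \leq \lambda_2$.

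To establish the key estimate I split according to which branch of the minimum realizes $u_{n+1}(x)$, abbreviating $A_k(z) \defeq \tfrac12(M_{k-1}(z) + m_{k-1}(z))$ and $B_k(z) \defeq \max\{0, M_{k-1}(z) - \eps\}$ so that $u_k(z) = \min\{A_k(z), B_k(z)\}$. If $u_{n+1}(x) = A_{n+1}(x)$ (averaging branch), then $u_n(x) \leq A_n(x)$ gives
\[
\lambda_1 \;\leq\; A_n(x) - A_{n+1}(x) \;=\; \tfrac12\big[(M_{n-1} - M_n) + (m_{n-1} - m_n)\big](x),
\]
and combining with $m_{n-1}(x) - m_n(x) \leq \lambda_2$ yields the bound directly. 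If instead $u_{n+1}(x) = B_{n+1}(x)$ (non-averaging branch), the inequalities $u_n(x) > u_{n+1}(x) \geq 0$ force $M_{n-1}(x) > \eps$ and $u_n(x) \leq M_{n-1}(x) - \eps$; splitting on whether $B_{n+1}(x) > 0$ or $B_{n+1}(x) = 0$ (in which case $M_n(x) \leq \eps$), a direct computation from $\lambda_1 \leq u_n(x) - u_{n+1}(x)$ yields $M_{n-1}(x) - M_n(x) \geq \lambda_1$ in both subcases, and the preliminary $\lambda_1 \leq \lambda_2$ then upgrades this to $M_{n-1} - M_n \geq 2\lambda_1 - \lambda_2$.

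The main obstacle is precisely the non-averaging branch: the direct comparison there produces only a first-order drop $M_{n-1} - M_n \geq \lambda_1$, and the non-expansivity of $T$ is the essential extra input needed to close the gap to $2\lambda_1 - \lambda_2$. Once that ingredient is available, the remaining estimates are routine manipulations of the recursive formula and the uniform bound $\|u_{n-1} - u_n\|_\infty \leq \lambda_2$.
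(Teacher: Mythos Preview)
Your proof is correct (with the harmless caveat that the non-averaging branch implicitly assumes $\lambda_1>0$; when $\lambda_1\le 0$ the key estimate $M_{n-1}-M_n\ge 2\lambda_1-\lambda_2$ is immediate from $M_{n-1}\ge M_n$ and $\lambda_2\ge 0$). The target inequality, the choice of $y$ realizing $\sup_{B_\eps(x)}u_{n-1}$ up to $\delta$, and the reduction to the key estimate $M_{n-1}(x)-M_n(x)\ge 2\lambda_1-\lambda_2$ are exactly what the paper does.

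Where your argument differs is in how this key estimate is obtained. The paper proceeds uniformly, without case analysis, by applying the elementary inequalities
\[
\min\{a,b\}-\min\{c,d\}\le\max\{a-c,b-d\},\qquad \max\{a,b\}-\max\{c,d\}\le\max\{a-c,b-d\}
\]
to $u_n(x)-u_{n+1}(x)$ written out via the recursion; this yields directly
\[
\lambda_1\le \tfrac12\big(M_{n-1}-M_n\big)+\tfrac12\max\{m_{n-1}-m_n,\;M_{n-1}-M_n\}\le \tfrac12\big(M_{n-1}-M_n\big)+\tfrac12\lambda_2,
\]
from which the key estimate follows. Your route instead splits on which branch of the $\min$ realizes $u_{n+1}(x)$, obtaining the sharp bound in the averaging branch and only $M_{n-1}-M_n\ge\lambda_1$ in the non-averaging branch, and then invokes the non-expansivity of the DPP operator $T$ (so that $\lambda_1\le\lambda_2$) to close the gap. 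The paper's version is shorter and avoids the auxiliary non-expansivity fact; your version has the virtue of isolating non-expansivity of $T$ as a stand-alone property, which is conceptually pleasant and in fact underlies the paper's later observation that $\|u_n-u_{n+1}\|_\infty$ is non-increasing in $n$.
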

\begin{proof}
Given $\lambda_1\leq u_n(x)-u_{n+1}(x)$, by the recursive definition, we have
\begin{align*}
\lambda_1\leq&
\min \Bigg\{ \frac12  \Bigg(\sup_{ B_\eps (x)}
u_{n-1}+  \inf_{ B_\eps (x)} u_{n-1} \Bigg);
\max \Bigg\{0;
 \sup_{B_\eps (x)} u_{n-1}- \eps \Bigg\}\Bigg\}\\
&-
\min \Bigg\{ \frac12  \Bigg(\sup_{B_\eps (x)}u_n+ \inf_{B_\eps (x)} u_n \Bigg) ;
\max \Bigg\{0;
 \sup_{B_\eps (x)} u_n- \eps \Bigg\}\Bigg\}.
\end{align*}
From the standard inequalities
$$
\min\{a,b\}-\min\{c,d\}\leq \max\{a-c,b-d\} \quad \text{and} \quad
\max\{a,b\}-\max\{c,d\}\leq \max\{a-c,b-d\},
$$
we get
$$
\lambda_1\leq
 \max \Bigg\{
\frac12  \Bigg(\sup_{B_\eps (x)}u_{n-1}
+  \inf_{B_\eps (x)} u_{n-1} \Bigg)
-\frac12  \Bigg(\sup_{B_\eps (x)}u_n
+ \inf_{B_\eps (x)} u_n \Bigg);0;
\sup_{B_\eps (x)}u_{n-1}
-\sup_{B_\eps (x)}u_n
\Bigg\}.\\
$$
Since $u_{n-1}\geq u_n$ we can avoid the term 0 in the RHS, we obtain
$$
\lambda_1\leq
\frac12 \Bigg(
\sup_{B_\eps (x)}u_{n-1}
-\sup_{B_\eps (x)}u_n
\Bigg)
+\frac12 \max \Bigg\{
\inf_{B_\eps (x)} u_{n-1}
- \inf_{B_\eps (x)} u_n;
\sup_{B_\eps (x)}u_{n-1}
-\sup_{B_\eps (x)}u_n
\Bigg\}.
$$
We bound the difference between the suprema and infima using the inequality
$\|u_{n-1}-u_n\|_\infty\leq \lambda_2$, we obtain
\[
2\lambda_1\leq\Bigg(\sup_{B_\eps (x)}u_{n-1}
-\sup_{B_\eps (x)}u_n \Bigg)+\lambda_2,
\]
that is,
\[
2\lambda_1-\lambda_2 +\sup_{B_\eps (x)}u_n \leq \sup_{B_\eps (x)}u_{n-1}.
\]

Finally, we can choose $y\in B_\eps(x)$ such that
\[
u_{n-1}(y)+\delta\geq \sup_{B_\eps (x)}u_{n-1}
\]
which gives the desired inequality.
\end{proof}

\begin{proposition}
\label{prop:unifconv}
The sequence $u_n$ converges uniformly and the limit $u$ is a solution to the DPP.
\end{proposition}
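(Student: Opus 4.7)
The plan is to first establish uniform convergence of $u_n$ by iterating Lemma~\ref{lem:lambda}, and then pass to the limit in the recursive formula to conclude that $u$ solves the DPP. Set $M_n := \|u_n - u_{n+1}\|_{L^\infty(\Omega_\eps)}$. Because the update $T$ defined by $u_{n+1} = T(u_n)$ is assembled from $\sup$, $\inf$, $\min$, $\max$ and an affine combination---all $1$-Lipschitz in the supremum norm---the operator $T$ is $1$-Lipschitz in sup-norm, so $(M_n)$ is non-increasing; let $\lambda := \lim_n M_n \geq 0$.

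To show $\lambda = 0$, I would argue by contradiction. Starting from $x_0 \in \Omega$ with $u_n(x_0) - u_{n+1}(x_0) \geq M_n - \eta$ (for $\eta > 0$ small), Lemma~\ref{lem:lambda} produces $x_1 \in B_\eps(x_0)$ with $u_{n-1}(x_1) - u_n(x_1) \geq 2(M_n - \eta) - M_{n-1} - \delta$. Iterating $k$ times yields a chain $x_0, \ldots, x_k$ with $x_j \in B_\eps(x_{j-1})$ and
\begin{equation*}
u_{n-k}(x_k) - u_{n-k+1}(x_k) \;\geq\; 2^k(M_n - \eta) \;-\; \sum_{j=1}^k 2^{k-j} M_{n-j} \;-\; (2^k - 1)\delta.
\end{equation*}
If at some step $x_j$ enters $\Gamma_\eps$, then the left-hand side vanishes (both $u_{n-j}$ and $u_{n-j+1}$ coincide there with the boundary data), and using $M_{n-j} \leq \lambda + \eta$ for $n$ sufficiently large the estimate rearranges to $M_n - \eta \leq (\lambda + \eta + \delta)(1 - 2^{-k})$. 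Letting $\eta, \delta \to 0$ and then $k \to \infty$ gives $\lambda \leq \lambda(1 - 2^{-k}) \to 0$, contradicting $\lambda > 0$. The alternative scenario---a chain confined to $\Omega$ forever---is incompatible with the uniform $L^\infty$ bound on $u_n$ whenever $M_n$ strictly exceeds $\lambda$, since then the right-hand side above grows like $2^k(M_n - \lambda)$. Hence $M_n \downarrow 0$, and the telescoping estimate $\|u_n - u\|_{L^\infty(\Omega_\eps)} \leq \sum_{m \geq n} M_m$ delivers uniform convergence.

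With uniform convergence in hand, $|\sup_{B_\eps(x)} u_n - \sup_{B_\eps(x)} u| \leq \|u_n - u\|_{L^\infty(\Omega_\eps)} \to 0$ uniformly in $x$, and likewise for the infimum (as already noted in the excerpt via monotone convergence). Since $\min$, $\max$ and the affine combination are continuous, passing $n \to \infty$ in
\begin{equation*}
u_{n+1}(x) \;=\; \min\left\{\tfrac12\bigl(\sup_{B_\eps(x)} u_n + \inf_{B_\eps(x)} u_n\bigr),\; \max\bigl\{0,\; \sup_{B_\eps(x)} u_n - \eps\bigr\}\right\}
\end{equation*}
shows that $u$ is a solution to the DPP.

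The main obstacle is the quantitative decay of $(M_n)$: Lemma~\ref{lem:lambda} by itself only delivers non-increasingness, and converting the iterated doubling estimate into genuine decay to zero requires a careful interplay between the boundary behaviour of the chain $(x_j)$ (on $\Gamma_\eps$ all drops vanish identically) and the uniform $L^\infty$ bound on $u_n$, with the auxiliary parameters $\eta$ and $\delta$ calibrated against the iteration depth $k$ so that the exponential amplification built into the iterated inequality cannot be absorbed.
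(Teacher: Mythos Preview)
Your overall plan---iterate Lemma~\ref{lem:lambda} to derive a contradiction from $\lambda>0$---matches the paper's, but the quantity you track through the iteration cannot produce the contradiction you claim.

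At step $k$ you bound the \emph{drop at a single point}, $u_{n-k}(x_k)-u_{n-k+1}(x_k)$, from below by
\[
2^{k}(M_n-\eta)-\sum_{j=1}^{k}2^{k-j}M_{n-j}-(2^{k}-1)\delta.
\]
You assert this ``grows like $2^{k}(M_n-\lambda)$'' when the chain stays in $\Omega$. But to invoke $M_{n-j}\le\lambda+\eta$ for all $1\le j\le k$ you must take $n-k$ large (depending on $\eta$), and then $M_n-\lambda<\eta$ as well. Expanding, the right-hand side is at most $2^{k}(M_n-\lambda)-2^{k}\eta+(\lambda+\eta+\delta)$, and since $M_n-\lambda<\eta$ the coefficient of $2^{k}$ is negative: the bound tends to $-\infty$, not $+\infty$. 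Equivalently, since $M_{n-j}\ge M_n$ (the sequence $M_m$ is non-increasing), one always has $\sum_{j=1}^{k}2^{k-j}M_{n-j}\ge(2^{k}-1)M_n$, so your lower bound never exceeds $M_n-2^{k}\eta$, which is perfectly compatible with the trivial upper bound $M_{n-k}$ on the drop. No contradiction arises.

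The paper avoids this by extracting from Lemma~\ref{lem:lambda} the \emph{other} inequality, $u_{n_0+k-l}(x_{l-1})\le u_{n_0+k-l-1}(x_l)+C_l\delta-M$, which links values at \emph{different} points and \emph{different} levels. Telescoping over $l=1,\dots,k-1$ yields
\[
u_{n_0+k}(x_0)-u_{n_0}(x_{k-1})\le (3\cdot 2^{k}-2k-3)\delta-kM,
\]
and now the uniform $L^\infty$ bound on the sequence $(u_m)$ clashes with the right-hand side once one fixes $k$ large and then takes $\delta\ll 2^{-k}$. The crucial gain is the \emph{linear} term $-kM$, not an exponential one.

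A secondary gap: even granting $M_n\downarrow 0$, the estimate $\|u_n-u\|_\infty\le\sum_{m\ge n}M_m$ is only useful if $\sum M_m<\infty$, which does not follow from $M_n\to0$. You need to combine $M_n\to0$ with the monotonicity of $(u_n)$ to conclude uniform convergence.
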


\begin{proof}
We want to show that the convergence is uniform. Suppose not. Observe that if
$||u_n-u_{n+1}||_\infty\to 0$ we can extract a uniformly Cauchy subsequence, thus this
subsequence converges uniformly to a limit $u$. This implies that the $u_n$ converge
uniformly to $u$, because of the monotonicity. By the recursive definition we have
$\|u_n-u_{n+1}\|_\infty\geq \|u_{n-1}-u_n \|_\infty\geq 0$.
Then, as we are assuming the convergence
is not uniform, we have
\[
\|u_n-u_{n+1}\|_\infty\to M \quad \text{and} \quad \|u_n-u_{n+1}\|_\infty\geq M
\]
for some $M>0$.

Given $\delta>0$, let $n_0\in\N$ such that for all $n\geq n_0$,
\[
\|u_n-u_{n+1}\|_\infty\leq M+\delta.
\]
We fix $k\in\N$. Let $x_0\in\Om$ such that
\[
M-\delta<u_{n_0+k-1}(x_0)-u_{n_0+k}(x_0).
\]

Now we apply Lemma~\ref{lem:lambda} for $n=n_0+k-1$, $\lambda_1=M-\delta$ and $\lambda_2=M+\delta$, we get
\begin{align*}
u_{n_0+k-1}(x_0),u_{n_0+k-1}(x_1)&\leq \sup_{B_\eps(x_0)}u_{n_0+k-1}\\
&\leq u_{n_0+k-2}(x_1)+\lambda_2-2\lambda_1+\delta\\
&\leq u_{n_0+k-2}(x_1)+4\delta-M
\end{align*}
for some $x_1\in B_\eps(x_0)$.
If we repeat the argument for $x_1$, but now with $\lambda_1=t\delta-M$, we obtain
\[
u_{n_0+k-2}(x_1),u_{n_0+k-2}(x_2)\leq u_{n_0+k-3}(x_2)+(2t+2)\delta-M.
\]
Inductively, we obtain a sequence $x_l$, $1\leq l \leq k-1$ such that
\[
u_{n_0+k-l}(x_{l-1}),u_{n_0+k-l}(x_l)\leq u_{n_0+k-l-1}(x_l)+(3\times 2^l-2)\delta-M.
\]
If we add the inequalities
\[
u_{n_0+k-l}(x_{l-1})\leq u_{n_0+k-l-1}(x_l)+(3\times 2^l-2)\delta-M
\]
for $1\leq l \leq k-1$ and $u_{n_0+k}(x_0)\leq u_{n_0+k-1}(x_0)+\delta-M$, we get
\[
u_{n_0+k}(x_0)-u_{n_0}(x_{k-1})\leq (3\times 2^k-2k-3)\delta-kM.
\]
Which is a contradiction since $u_n$ is bounded but we can make the RHS as small as we want by choosing a big value for $k$ and a small one for $\delta$.
\end{proof}

Now, we are ready to prove one of the main results of this section.

\begin{theorem}[{\bf Dynamic Programming Principle}]
\label{teo:DPP}
The game has a value $u=u_\I= u_\II$ and it satisfies
\[
\begin{split}
u(x) = \min \Bigg\{ \frac12  \left( \sup_{y\in  B_\eps (x)}
u(y) + \inf_{y\in  B_\eps (x)} u(y)\right) ;
\max \Bigg\{0;
 \sup_{y\in  B_\eps (x)} u(y) - \eps \Bigg\}\Bigg\}
\end{split}
\]
for $x\in \Om$ and  $u(x)=F(x)$ in $\Gamma_\eps$.
\end{theorem}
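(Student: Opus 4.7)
The recursive identity defining $u_{n+1}$ passes to the limit along the uniform convergence established in Proposition~\ref{prop:unifconv}, so the limit $u$ already fulfils the displayed DPP. It therefore remains to identify this $u$ with both $u_\I$ and $u_\II$. Since $u_\I\le u_\II$ holds by definition of the game values, the task reduces to proving $u_\II(x_0)\le u(x_0)\le u_\I(x_0)$ for every $x_0\in\Om$.

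For the upper bound $u_\II(x_0)\le u(x_0)$, fix $\eta>0$ and design a near-optimal strategy $(S_\II^*,\theta_\II^*)$ for Player~II. With the notation
\[
A(x) \defeq \tfrac12\Bigl(\sup_{B_\eps(x)}u+\inf_{B_\eps(x)}u\Bigr), \qquad B(x) \defeq \max\Bigl\{0,\sup_{B_\eps(x)}u-\eps\Bigr\},
\]
set $\theta_\II^*(x_0,\ldots,x_k)=0$ when $A(x_k)\le B(x_k)$ and $\theta_\II^*=1$ otherwise; in the Tug-of-War branch choose $S_\II^*(x_0,\ldots,x_k)\in B_\eps(x_k)$ satisfying $u(S_\II^*)\le\inf_{B_\eps(x_k)}u+\eta\,2^{-(k+1)}$. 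Against an arbitrary strategy $(S_\I,\theta_\I)$ of Player~I, form the process
\[
M_k \defeq u(x_k)-\eps\sum_{j=0}^{k-1}\theta_\II^*(x_0,\ldots,x_j)\bigl(1-\theta_\I(x_0,\ldots,x_j)\bigr)+\eta\, 2^{-k},
\]
extended past the game end by the terminal value $F(x_\tau)-\eps\sum\theta_\II^*(1-\theta_\I)+\eta\,2^{-\tau}$. A case analysis based on the DPP shows that $\{M_{k\wedge\tau}\}$ is a supermartingale: in the Tug-of-War branch one has $u(x_k)=A(x_k)$ and $\mathbb{E}[u(x_{k+1})\mid\mathcal F_k]\le\tfrac12\sup_{B_\eps(x_k)}u+\tfrac12(\inf_{B_\eps(x_k)}u+\eta\,2^{-(k+1)})\le u(x_k)+\eta\,2^{-(k+2)}$; in the pass branch $u(x_k)=B(x_k)\ge 0$, and either Player~I ends the game (so $F(x_\tau)=0$ and $M_\tau-M_k=-u(x_k)\le 0$) or Player~I moves and $u(x_{k+1})\le\sup_{B_\eps(x_k)}u\le B(x_k)+\eps=u(x_k)+\eps$, which exactly compensates the added $\eps$ charge.

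Applying optional stopping at $\tau$, and using that $u\equiv F$ on $\Gamma_\eps$ and $u(x_\tau)=F(x_\tau)=0$ whenever the game is ended voluntarily inside $\Om$, we deduce
\[
\mathbb{E}_{S_\I,S_\II^*,\theta_\I,\theta_\II^*}^{x_0}\!\Bigl[F(x_\tau)-\eps\sum_{j=0}^{\tau-1}\theta_\II^*(1-\theta_\I)\Bigr]\le M_0 = u(x_0)+\eta.
\]
Taking the supremum over $(S_\I,\theta_\I)$ and letting $\eta\to 0$ delivers $u_\II(x_0)\le u(x_0)$. The reverse bound follows from a symmetric construction: Player~I selects near-supremum points in the Tug-of-War branch and sets $\theta_\I^*=1$ precisely when $\sup_{B_\eps(x_k)}u<\eps$; the analogous process $\tilde M_k\defeq u(x_k)-\eps\sum\theta_\II(1-\theta_\I^*)-\eta\,2^{-k}$ is then a submartingale, and optional stopping gives $u(x_0)\le u_\I(x_0)+\eta$.

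The principal technical point is the validity of optional stopping, which hinges on almost sure termination of the game under the chosen strategies. Whenever Player~II plays Tug-of-War infinitely often the symmetric coin-toss dynamics drive the token out of the bounded set $\Om$ in finite expected time, as in the classical framework of \cite{PSSW}. On branches where Player~II always passes and Player~I perpetually moves, the cumulative charge $-\eps k$ diverges, so that the corresponding expectation is $-\infty$ and the Player~II inequality is trivial; for the symmetric argument, Player~I's strategy mandates termination as soon as $\sup_{B_\eps(x_k)}u<\eps$, ruling out non-terminating branches under her strategy. Once these measurability and termination checks are in place, combining the two inequalities with $u_\I\le u_\II$ yields $u=u_\I=u_\II$, completing the proof.
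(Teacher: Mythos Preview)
Your supermartingale/submartingale skeleton is reasonable and the reduction to $u_\II\le u\le u_\I$ is the same as in the paper, but the termination discussion is where your argument breaks down, and this is precisely where the paper does the real work.

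For $u_\II\le u$: note that by the convention in the definition of $u_\II$, a run that fails to terminate contributes $+\infty$ to the inner $\sup_{S_\I,\theta_\I}$, so it does \emph{not} help you at all; your remark that ``the cumulative charge $-\eps k$ diverges so the expectation is $-\infty$'' has the sign backwards. Nor is it true that ``symmetric coin-toss dynamics drive the token out'': under Tug-of-War both players choose destinations, and if Player~I cycles the token back toward Player~II's near-inf point nothing forces exit from $\Omega$. The paper avoids this by basing Player~II's strategy on the approximants $u_{n-k}$ for the first $n$ rounds only, and \emph{then} switching to a crude terminating strategy (pull in a fixed direction). This finite-horizon trick is the reason the paper works with $u_n$ rather than $u$ itself, and it is missing from your argument.

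For $u\le u_\I$: the gap is more serious. Player~I only gets the option $\theta_\I$ when Player~II passes the turn; if Player~II always chooses Tug-of-War, your rule ``set $\theta_\I^*=1$ when $\sup_{B_\eps(x_k)}u<\eps$'' is never invoked, and there is no termination mechanism left. Moreover, near-sup moves by Player~I during Tug-of-War do not by themselves guarantee exit. The paper solves this with a genuinely different and non-trivial device: the \emph{backtracking strategy}. One fixes $\delta_0=\tfrac12\min\{\delta(x_0),\eps\}$ with $\delta(x)=\sup_{B_\eps(x)}u-u(x)$, distinguishes the set $X_0=\{\delta>\delta_0\}$, and has Player~I step toward near-sup points inside $X_0$ but \emph{backtrack along the previous path} outside $X_0$. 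The resulting process $M_k$ (which involves $u(y_k)-\delta_0 d_k$, with $y_k$ the last visit to $X_0$ and $d_k$ a step-count) is shown to be a bounded submartingale that increases by at least $\delta_0$ whenever Player~I moves, which is what forces a.s.\ termination. Your ``symmetric construction'' sentence does not come close to this; without the backtracking idea the submartingale you write down is unbounded and optional stopping cannot be applied.
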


\begin{proof}
By definition, $u_\I\le u_\II$.
We will show that $u_\II\leq u$ and $u\leq u_\I$ for the $u$ constructed in Proposition~\ref{prop:unifconv}.
This, together with the fact that $u$ satisfy the DPP will complete the proof.
For the first inequality we will use the constructed sequence of function $u_n$ as in \cite{BPR}.
For the second inequality we will use an argument similar to one in \cite{JPR}.

We want to show that $u_\II\leq u$.
Given $\eta>0$ let $n>0$ be such that $u_n(x_0)<u(x_0)+\frac{\eta}{2}$.
We build an strategy ($S^0_\II, \theta^0_\II$) for Player II, in the firsts $n$ moves, given $x_{k-1}$ she will choose to
play Tug-of-War or pass the turn depending whether
$$
\frac12 \left(\inf_{B_\eps(x_{k-1})} u_{n-k}+\sup_{B_\eps(x_{k-1})} u_{n-k}\right) \quad \text{or}\quad
\max\Bigg\{0;\sup_{B_\eps(x_{k-1})} u_{n-k}-\eps\Bigg\}
$$
is larger.
When playing Tug-of-War she will move to a point that almost minimize $u_{n-k}$, that is, she chooses $x_k\in B_\eps(x_{k-1})$ such that
\[
u_{n-k}(x_k)<\inf_{B_\eps(x_{k-1})}u_{n-k}+\frac{\eta}{2n}.
\]
After the first $n$ moves she will choose to play Tug-of-War following a strategy that ends the game almost surely (for example pointing in a fix direction).

We have
\[
\begin{split}
&\mathbb{E}_{S^0_\I, S_\II}^{x_0}[u_{n-k}(x_k)+\frac{(n-k)\eta}{2n}|\,x_0,\ldots,x_{k-1}]
\\
&\leq\min \left\{\frac12 \Bigg( \inf_{B_\eps(x_{k-1})} u_{n-k}+\sup_{B_\eps(x_{k-1})} u_{n-k} + \frac{\eta}{n}\Bigg);\max\Bigg\{0;\sup_{B_\eps(x_{k-1})} u_{n-k}-\eps\Bigg\}\right\}+\frac{(n-k)\eta}{2n}
\\
&\leq u_{n-k+1}(x_{k-1})+\frac{(n-k+1)\eta}{2n},
\end{split}
\]
where we have estimated the strategy of Player I by $\sup$ and
used the construction for the $u_k$'s.
Thus
\[
M_k=
\left\{ \begin{array}{ll}
\displaystyle
u_{n-k}(x_k)+\frac{(n-k)\eta}{2n} \qquad & \mbox{
for $0\leq k\leq n$}, \\[10pt]
\displaystyle \sup_{\Gamma_{\varepsilon}} F \qquad & \mbox{ for $k>n$},
\end{array}
\right.
\]
is a supermartingale.

Now we have
\begin{equation}
\label{eq:martargument}
\begin{split}
u_\II(x_0)
&=\inf_{S_\II,\theta_\II}\sup_{S_{\I},\theta_\I}\,
\mathbb{E}_{S_{\I},S_\II,\theta_\I,\theta_\II}^{x_0}\left[F(x_\tau)-\eps\sum_{i=0}^{\tau-1}
\theta_{\II}(x_0,\ldots,x_i)(1-\theta_{\I}(x_0,\ldots,x_i))\right]\\
&\leq\sup_{S_{\I},\theta_\I}\,
\mathbb{E}_{S_{\I},S^0_\II,\theta_\I,\theta^0_\II}^{x_0}\left[F(x_\tau)-\eps\sum_{i=0}^{\tau-1}
\theta_{\II}(x_0,\ldots,x_i)(1-\theta_{\I}(x_0,\ldots,x_i))\right]\\
&\leq \inf_{S_\II} \liminf_{k\to\infty}\mathbb{E}_{S_{\I},S^0_\II,\theta_\I,\theta^0_\II}^{x_0}[M_{\tau\wedge k}]\\
&\leq\inf_{S_\II}  \mathbb{E}_{S_{\I},S^0_\II,\theta_\I,\theta^0_\II}^{x_0}[M_0]=u_n(x_0)+\frac{\eta}{2}<u(x_0)+\eta,
\end{split}
\end{equation}
where $\tau\wedge k \defeq \min\{\tau,k\}$, and we used the optional stopping theorem for $M_{k}$.
Since $\eta$ is arbitrary this proves the claim.

Now, we will show that $u\leq u_\I$.
We want to find a strategy ($S^0_\I, \theta^0_\I$) for Player I, that ensures a payoff close to $u$.
He has to maximize the expected payoff and, at the same time, make sure that the game ends almost sure.
This is done by using the backtracking strategy (cf. \cite[Theorem 2.2]{PSSW} for more details).

To that end, we define
\[
\delta(x)=\sup_{B_\eps(x)} u -u(x).
\]
Fix $\eta>0$ and a starting point $x_0\in \Om$, and set $\delta_0
=\min\{\delta(x_0),\eps\}/2$. We suppose for now that $\delta_0>0$, and
define
\[
\begin{split}
X_0=\Big\{x\in \Om\,:\, \delta(x)> \delta_0\Big\}.
\end{split}
\]
We consider a strategy $S_I^0$ for Player I that distinguishes between the cases $x_k\in X_0$
and $x_k\notin X_0$.
To that end, we define
\[
m_k=
\begin{cases}
u(x_k)-\eta 2^{-k} &\text{ if } x_k\in X_0\\
u(y_k)-\delta_0 d_k-\eta 2^{-k} &\text{ if } x_k \notin X_0
\end{cases}
\]
and
\[
M_k=m_k-\eps\sum_{i=0}^{k-1}
\theta_{\II}(x_0,\ldots,x_i)(1-\theta_{\I}(x_0,\ldots,x_i))
\]
where $y_k$ denotes the last game position in $X_0$ up to time $k$, and $d_k$ is the distance, measured in
number of steps, from $x_k$ to $y_k$ along the graph spanned by the previous points
$y_k=x_{k-j},x_{k-j+1},\ldots,x_k$ that were used to get from $y_k$ to $x_k$.

In what follows we define a strategy for Player I and prove that $M_k$ is a submartingale.
Observe that $M_{k+1}-m_{k+1}=M_k-m_k$ or $M_{k+1}-m_{k+1}=M_k-m_k-\eps$, so to prove the desired submartingale property we will mostly make computations in terms of $m_k$.

First, if $x_k\in X_0$, then Player I chooses to step to a point
$x_{k+1}$ satisfying
\[
u(x_{k+1})\ge \sup_{B_{\eps}(x_k)} u-\eta_{k+1} 2^{-(k+1)},
\]
where $\eta_{k+1}\in (0,\eta]$ is small enough to guarantee that $x_{k+1}\in X_0$.
Let us remark that
\begin{equation}
\label{eq:up-more-than-down}
u(x)-\inf_{B_\eps(x)}u \leq  \sup_{B_\eps(x)}u-u(x)=\delta(x)
\end{equation}
and hence
$$
\begin{array}{rcl}
  \delta(x_k)-\eta_{k+1} 2^{-(k+1)} & \leq & \displaystyle \sup_{B_\eps(x_k)} u -u(x_k)-\eta_{k+1} 2^{-(k+1)}
   \leq  u(x_{k+1})-u(x_k) \\
   & \leq & \displaystyle u(x_{k+1})-\inf_{B_\eps(x_{k+1})}u  \leq  \delta(x_{k+1}).
\end{array}
$$
Therefore, we can guarantee that $x_{k+1}\in X_0$ by choosing $\eta_{k+1}$ such that
\[
\delta_0<\delta(x_k)-\eta_{k+1} 2^{-(k+1)}.
\]

Thus if $x_k\in X_0$ and Player I gets to choose the next position, it holds that
 \[
 \begin{split}
 m_{k+1}&\ge u(x_k)+\delta(x_k)-\eta_{k+1} 2^{-(k+1)}-\eta 2^{-(k+1)}\\
 &\ge u(x_k)+\delta(x_k)-\eta 2^{-k}\\
 &=m_k+\delta(x_k).
 \end{split}
 \]

When Tug-of-War is played, if Player II wins the toss and moves from $x_k\in X_0$ to $x_{k+1}\in X_0$,  it
holds, in view of \eqref{eq:up-more-than-down}, that
\[
\begin{split}
m_{k+1}\ge u(x_k)-\delta(x_k)-\eta 2^{-(k+1)}>m_k-\delta(x_k). 
\end{split}
\]

If Player II wins the toss and she moves
to a point $x_{k+1}\notin X_0$ (whether  $x_k\in X_0$ or not), it holds that
\begin{equation}
\label{eq:ineqxk+1notinX_0}
\begin{split}
m_{k+1}&= u(y_k)-d_{k+1} \delta_0-\eta 2^{-(k+1)}\\
&\ge u(y_k)-d_{k} \delta_0-\delta_0-\eta 2^{-k}\\
&=m_k-\delta_0 .
\end{split}
\end{equation}

When Player II pass the turn to Player I, he can choose to end the game immediately or to move by paying $\eps$. If $\delta(x_k)\geq \eps$ he will choose to play, we get $M_{k+1}\geq M_k+\delta(x_k)-\eps\geq M_k$.
If $\eps>\delta(x_k)$, the DPP implies that $0\geq u(x_k)$ and hence he can finish the game immediately earning more than $m_k$.

In the case $x_k\notin X_0$, the strategy for Player I is to backtrack to $y_k$,
that is, if he wins the coin toss, he moves the token to one of the points
$x_{k-j},x_{k-j+1},\ldots,x_{k-1}$ closer to $y_k$ so that $d_{k+1}= d_k-1$.

Thus if Player I wins and
$x_k\notin X_0$ (whether $x_{k+1}\in X_0$ or not),
\[
\begin{split}
m_{k+1}\ge \delta_0+m_k.
\end{split}
\]

When Tug-of-War is played, if Player II wins the coin toss and moves from $x_k\notin X_0$ to $x_{k+1}\in X_0$, then
\[
\begin{split}
m_{k+1}=u(x_{k+1})-\eta 2^{-(k+1)}\ge -\delta(x_k)+u(x_k)-\eta 2^{-k}\ge -\delta_0+m_k
\end{split}
\]
where the first inequality is due to \eqref{eq:up-more-than-down}, and the second follows from the fact
$m_k=u(y_k)-d_k\delta_0-\eta 2^{-k}\le u(x_k)-\eta 2^{-k}$.
The same was obtained in \eqref{eq:ineqxk+1notinX_0} when $x_{k+1}\notin X_0$.

It remains to analyse what happens when Player II pass the turn to Player I in this case.
Since $\delta(x_k)\leq \eps/2<\eps$, we have $0\geq u(x_k)$ and as before he can finish the game immediately earning more than $m_k$.

Taking into account all the different cases, we see that $M_k$ is a submartingale. We can also see that when the game ends Player I ensures a payoff of al least $M_k$. Let us observe that $m_k$ is also a submartingale, and it is bounded. Since Player I can assure that $m_{k+1}\ge m_k+\delta_0$ if he gets to move the token, the game must terminate almost surely. this is because, there are arbitrary long
sequences of moves made by Player I (if he does not end the game immediately).
Indeed, if Player II pass a turn, then Player I gets to move, and otherwise this is a
consequence of the zero-one law.

We can now conclude the proof with an inequality analogous to that in \eqref{eq:martargument}.

Finally, let us remove the assumption that $\delta(x_0)>0$. If $\delta(x_0)=0$ for $x_0\in \Omega$, when Tug-of-War is played, Player I adopts a strategy of pulling towards a boundary point
until the game token reaches a point $x_0'$ such that $\delta(x_0')>0$ or $x_0'$ is outside
$\Om$. It holds that $u(x_0)= u(x_0')$, because by \eqref{eq:up-more-than-down}. If Player II passes the turn, Player I end the game immediately earning 0 (recall that $\delta(x)=0$ implies $0\geq u(x)$ because of the DPP).
\end{proof}

\subsection{Game value convergence}
\setcounter{equation}{0}

In this subsection we study the behaviour of the game values as $\eps \to 0$. In the previous sections we have analyse the game for a fix value of $\eps$, here we will consider the game value for different values of $\eps$. For this purpose, we will refer to the game value as $u^\eps$, emphasizing its dependence on $\eps$.
We want to prove that
\[
u^\eps\to u
\]
uniformly on $\ol\Omega$ as $\eps\to0$, and that $u$ is a viscosity solution to
\begin{equation}\label{eq:u>0}
\left\{
\begin{array}{rclcl}
     \max\{-\Delta_\infty u, \chi_{\{u>0\}}-\abs{\nabla u}\} & = & 0 & \mbox{in} & \Omega \\
     u(x) & = & F(x) & \mbox{on} & \partial \Omega,
\end{array}
\right.
\end{equation}

 To this end, we would like to apply the following Arzel\`{a}-Ascoli type lemma. We refer to the interested reader to \cite[Lemma 4.2]{MPR1} for a proof.

\begin{lemma}\label{lem.ascoli.arzela} Let $\{u^\eps : \overline{\Om}
\to \R,\ \eps>0\}$ be a set of functions such that
\begin{enumerate}
\item there exists $C>0$ such that $\abs{u^\eps(x)}<C$ for
    every $\eps>0$ and every $x \in \ol \Om$,
\item \label{cond:2} given $\eta>0$ there are constants
    $r_0$ and $\eps_0$ such that for every $\eps < \eps_0$
    and any $x, y \in \overline{\Om}$ with $|x - y | < r_0 $
    it holds
$$
|u^\eps (x) - u^\eps (y)| < \eta.
$$
\end{enumerate}
Then, there exists a uniformly continuous function $u:
\overline{\Om} \to \R$ and a subsequence still denoted by
$\{u^\eps\}$ such that
\[
\begin{split}
u^{\eps}\to u \qquad\textrm{ uniformly in}\quad\overline{\Om},
\end{split}
\]
as $\eps\to 0$.
\end{lemma}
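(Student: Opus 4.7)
The plan is to follow the classical Arzelà--Ascoli strategy, adapted to the fact that the modulus of continuity in hypothesis (2) is asymptotic in $\eps$ (valid only for $\eps<\eps_0$). First, I would fix a countable dense subset $\{x_k\}_{k\in\N}$ of the compact set $\ol\Om$. By hypothesis (1), for each $k$ the numerical sequence $\{u^\eps(x_k)\}_\eps$ is bounded in $\R$, so a standard Cantor diagonal extraction produces a single subsequence (still denoted $\{u^\eps\}$) along which $u^\eps(x_k)\to\ell_k$ for every $k\in\N$.

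Next, I would show that the assignment $x_k\mapsto\ell_k$ is uniformly continuous on the dense set. Given $\eta>0$, hypothesis (2) provides $r_0,\eps_0>0$ such that $|u^\eps(x)-u^\eps(y)|<\eta$ whenever $\eps<\eps_0$ and $|x-y|<r_0$. Applying this to any pair $x_i,x_j$ with $|x_i-x_j|<r_0$ and passing to the limit along the extracted subsequence yields $|\ell_i-\ell_j|\le\eta$. This uniform modulus on a dense set allows me to extend $\{\ell_k\}$ uniquely to a uniformly continuous function $u\colon\ol\Om\to\R$ by the standard density argument.

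Finally, I would establish uniform convergence via an $\eta/3$ triangle-inequality argument. For $\eta>0$, let $r_0,\eps_0$ correspond to $\eta/3$ in hypothesis (2), and use compactness of $\ol\Om$ to select finitely many centers $x_{k_1},\dots,x_{k_N}$ from the dense set whose $r_0$-balls cover $\ol\Om$. Choose $\eps_1\le\eps_0$ small enough that, for all $\eps<\eps_1$, $|u^\eps(x_{k_i})-u(x_{k_i})|<\eta/3$ holds simultaneously for every $i\le N$ (possible since $N$ is finite). Then for any $x\in\ol\Om$, picking $x_{k_i}$ with $|x-x_{k_i}|<r_0$ and combining with the uniform continuity of $u$ (applied through the dense approximation),
$$
|u^\eps(x)-u(x)|\le |u^\eps(x)-u^\eps(x_{k_i})|+|u^\eps(x_{k_i})-u(x_{k_i})|+|u(x_{k_i})-u(x)| < \eta.
$$

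The main subtlety is the interplay between the asymptotic equicontinuity in (2) and the diagonal extraction: one must verify that the limit function genuinely inherits a uniform modulus of continuity from inequalities that hold only once $\eps$ is sufficiently small, and one must choose the cover $\{x_{k_i}\}$ before fixing how small $\eps$ must be, so that the simultaneous smallness at the $N$ centers is achievable along the already-extracted subsequence. Beyond this bookkeeping the argument is routine, and no additional structure on $\Om$ beyond compactness of $\ol\Om$ is required.
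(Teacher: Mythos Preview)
Your argument is correct and is precisely the classical Arzel\`a--Ascoli proof adapted to asymptotic equicontinuity. Note that the paper does not actually give its own proof of this lemma; it simply refers the reader to \cite[Lemma~4.2]{MPR1}, whose proof proceeds along the same diagonal-extraction and $\eta/3$ lines you outline.
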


So our task now is to show that the family $u^\eps$ satisfies the hypotheses of the previous lemma.
In the next Lemma, we prove that the family is asymptotically uniformly continuous, that is, it satisfies the condition \ref{cond:2} on Lemma~\ref{lem.ascoli.arzela}.
To do that we follow \cite{JPR}.

\begin{lemma}
\label{lem:asymp}
The family $u^\eps$ is asymptotically uniformly continuous.
\end{lemma}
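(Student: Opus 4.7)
The plan is to split the asymptotic uniform continuity estimate into two regimes---one near $\partial\Omega$ and one in the interior---and then combine them via the uniform continuity of $F$.

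\emph{Near-boundary estimate.} First I would show the existence of a modulus $\tilde\omega$ with $\tilde\omega(\delta)\to 0$ as $\delta\to 0$ such that whenever $\dist(x_0,\partial\Omega)<\delta$ (with $\eps\le\delta$) and $\hat x_0\in\partial\Omega$ is a point realizing the distance, one has $|u^\eps(x_0)-F(\hat x_0)|\le \tilde\omega(\delta)$. The idea is to exhibit ``pulling toward $\hat x_0$'' strategies for each player, in the spirit of \cite{PSSW,JPR}. For the upper bound, Player~II ignores the pass-the-turn option and uses the Tug-of-War strategy of moving toward $\hat x_0$; a martingale computation with the test function $x\mapsto |x-\hat x_0|^2$ yields that the expected exit time is finite and $\mathbb{E}[|x_\tau-\hat x_0|]\le C\sqrt\delta$, whence $u^\eps(x_0)\le F(\hat x_0)+\omega_F(C\sqrt\delta)$ by continuity of $F$. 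For the lower bound, Player~I uses the symmetric pulling strategy (pulling toward $\hat x_0$ on Tug-of-War turns and, whenever Player~II passes, moving toward $\hat x_0$ at cost $\eps$); the total $\eps$-penalty accrued before exit is bounded by the total distance traveled, which is $O(\delta)$.

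\emph{Interior estimate and combination.} Next, for $x,y$ in the interior with $|x-y|\le r_0$ and $\dist(x,\partial\Omega),\dist(y,\partial\Omega)\ge 2r_0$, I would establish $|u^\eps(x)-u^\eps(y)|\le C|x-y|$. This is done by a strategy-stealing/translation argument: given an almost-optimal strategy for Player~I starting from $y$, construct a strategy for Player~I starting from $x$ that uses roughly $\lceil|x-y|/\eps\rceil$ initial moves to steer the token to $y$ and then follows the original $y$-strategy. When Player~II plays Tug-of-War, the expected drift under Player~I's pulling is zero but a standard random-walk estimate shows the token reaches $y$ in finitely many moves with small error as $\eps\to 0$; when Player~II passes the turn, each move of Player~I costs $\eps$ and makes exactly $\eps$ progress toward $y$. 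Swapping the roles of $x$ and $y$ gives the matching inequality. Finally, given $\eta>0$, one takes $\delta>0$ so that $\tilde\omega(\delta)+\omega_F(2\delta)<\eta/2$ and then $r_0<\min\{\delta,\eta/(2C)\}$; a case analysis depending on whether each of $x,y$ lies within $2r_0$ of $\partial\Omega$ gives $|u^\eps(x)-u^\eps(y)|<\eta$ whenever $|x-y|<r_0$ and $\eps<r_0$.

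\emph{Main obstacle.} The chief difficulty is the asymmetric option structure (pass for Player~II, end-the-game for Player~I) that distinguishes this game from classical Tug-of-War. These options interact with $F$ in a delicate way near $\{u=0\}$: Player~I may prefer to end the game immediately rather than incur further $\eps$ penalties, and the set where this is optimal is itself $\eps$-dependent. The resolution is that each exercise of these options either costs $\eps$ (and corresponds to one unit of geometric progress) or terminates the game, so the cumulative effect on the payoff is bounded by the expected number of moves times $\eps$; since this number is $O(\delta/\eps)$ in the near-boundary regime and $O(|x-y|/\eps)$ in the interior regime, the additional contribution is of the same order as the classical Tug-of-War error terms and can be absorbed into the moduli $\tilde\omega$ and $\omega_F$.
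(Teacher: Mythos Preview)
Your approach is quite different from the paper's and considerably more involved. The paper never splits into boundary and interior regimes, nor does it invoke game strategies at all in this lemma. Instead it works directly with the DPP: setting $A(x)=\sup_{B_\eps(x)}u^\eps-\inf_{B_\eps(x)}u^\eps$, it proves the \emph{global} bound $A(x)\le 4\max\{\lip(F),1\}\,\eps$ by contradiction. If $A(x_0)$ exceeded this threshold, one checks that the DPP at $x_0$ collapses to the pure Tug-of-War average $u^\eps(x_0)=\tfrac12(\sup+\inf)$ (the alternative branch would force $A(x_0)<2\eps$). One then picks $x_1\in B_\eps(x_0)$ almost realizing the supremum, shows $A(x_1)$ is essentially as large, and iterates in both directions. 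The resulting chain $(x_i)$ eventually reaches $\Gamma_\eps$ on both ends, and summing the increments $u^\eps(x_i)-u^\eps(x_{i-1})\ge 2\max\{\lip(F),1\}\,\eps-\eta\,2^{-i}$ contradicts the Lipschitz bound on $F$. This is a one-shot argument yielding asymptotic \emph{Lipschitz} continuity with an explicit constant, with no strategy analysis and no boundary/interior dichotomy.

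Your route could perhaps be pushed through, but the interior ``steering'' step as written has a genuine gap. When Player~II elects Tug-of-War and pulls away from $y$, the token performs (in the relevant direction) an unbiased walk; it may exit $\Omega$ before reaching $y$, and the probability of this is \emph{not} $o(1)$ as $\eps\to 0$---it is governed by a gambler's-ruin ratio of order $r_0/\dist(x,\partial\Omega)$, which under your hypothesis $\dist(x,\partial\Omega)\ge 2r_0$ is bounded away from zero. On that event Player~I has no control over the terminal payoff, so the strategy you describe does not yield $u^\eps(x)\ge u^\eps(y)-C|x-y|$. A correct strategy-based version would require either a translation coupling (with careful handling of the mismatched exit times and of Player~II's position-dependent pass/play decision, which does not translate) or a much larger buffer $\dist(x,\partial\Omega)\gg r_0$ combined with your boundary estimate to absorb the boundary-exit event. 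Either fix is substantially more delicate than the paper's DPP chain argument, which sidesteps the issue entirely.
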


\begin{proof}
We prove the required oscillation estimate by arguing by contradiction:
We define
$$
A(x) \defeq \sup_{B_\eps (x)} u^\eps - \inf_{ B_\eps (x)} u^\eps
$$

We claim that
$$
A(x) \leq 4 \max \{ \lip (F) ; 1 \} \eps,
$$
for all $x\in \Om$.
Aiming for a contradiction, suppose that there exists $x_0\in
\Omega$ such that
$$
A(x_0) > 4 \max \{ \lip (F) ; 1 \} \eps.
$$
In this case, we have that
\begin{equation}
\label{eq:tug-of-war-like-dpp}
\begin{split}
u^\eps(x_0)
&= \min \Bigg\{ \frac12  \Bigg( \sup_{B_\eps (x_0)}
u^\eps + \inf_{B_\eps (x_0)} u^\eps \Bigg);
\max \Bigg\{0;
 \sup_{B_\eps (x_0)} u^\eps - \eps \Bigg\}\Bigg\}\\
&= \frac12 \Bigg(\sup_{ B_\eps (x_0)}u^\eps  + \inf_{  B_\eps (x_0)} u^\eps \Bigg).
\end{split}
\end{equation}
The reason is that the alternative
\[
\begin{split}
 \frac12 \Bigg(
\sup_{ B_\eps (x_0)}u^\eps  + \inf_{  B_\eps (x_0)}
u^\eps \Bigg) &>
\max \Bigg\{0;
 \sup_{ B_\eps (x_0)} u^\eps - \eps \Bigg\}\\
 &>\sup_{ B_\eps (x_0)} u^\eps - \eps
\end{split}
\]
would imply
\begin{equation}
\label{eq:contradiction}
\begin{split}
A(x_0) =\sup_{ B_\eps (x_0)}u^\eps  - \inf_{
B_\eps (x_0)}u^\eps  < 2\eps,
\end{split}
\end{equation}
which is a contradiction with $A(x_0) > 4 \max \{ \lip (F) ; 1 \} \eps$.
It follows from \eqref{eq:tug-of-war-like-dpp} that
$$
\sup_{
 B_\eps (x_0)}u^\eps  -u^\eps (x_0) =u^\eps(x_0) -\inf_{  B_\eps
(x_0)}u^\eps  =\frac12 A(x_0) .
$$
Let $\eta>0$ and take $x_1 \in  B_\eps (x_0)$ such
that
$$
u^\eps (x_1) \geq \sup_{  B_\eps (x_0)}u^\eps  -
\frac{\eta}{2}.
$$
We obtain
\begin{equation}
\label{eq:twopoints}
u^\eps (x_1) -u^\eps (x_0) \geq
\frac12 A(x_0)-
\frac{\eta}{2} \geq 2\max \{ \lip (F) ; 1 \} \eps - \frac{\eta}{2},
\end{equation}
and, since $x_0\in  B_\eps (x_1)$, also
$$
\sup_{  B_\eps (x_1)}u^\eps  - \inf_{  B_\eps (x_1)}u^\eps   \geq 2\max
\{ \lip (F) ; 1 \} \eps - \frac{\eta}{2}.
$$
Arguing as before, \eqref{eq:tug-of-war-like-dpp} also holds at $x_1$, since otherwise the above
inequality would lead to a contradiction similarly as \eqref{eq:contradiction} for small enough $\eta$.

Thus, \eqref{eq:twopoints} and \eqref{eq:tug-of-war-like-dpp} imply
$$
\sup_{
 B_\eps (x_1)}u^\eps  -u^\eps (x_1)
= u^\eps (x_1)- \inf_{  B_\eps (x_1)}u^\eps \geq 2\max
\{ \lip (F) ; 1 \} \eps - \frac{\eta}{2},
$$
so that
$$
A(x_1) =
\sup_{
 B_\eps (x_1)}u^\eps -u^\eps (x_1) +u^\eps (x_1) - \inf_{  B_\eps (x_1)}
u^\eps
\geq 4\max
\{ \lip (F) ; 1 \} \eps - \eta.
$$
Iterating this procedure, we obtain $x_i \in  B_\eps (x_{i-1})$
such that
\begin{equation}\label{big_slope}
u^\eps (x_i) -u^\eps (x_{i-1}) \geq 2\max \{ \lip (F) ; 1
\}
\eps -
\frac{\eta}{2^i}
\end{equation}
and
\begin{equation}\label{big_osc}
A(x_i) \geq 4\max
\{ \lip (F) ; 1 \} \eps - \sum_{j=0}^{i-1}  \frac{\eta}{2^j}.
\end{equation}

We can proceed with an analogous argument considering points where
the infimum is nearly attained to obtain $x_{-1}$, $x_{-2}$,...
such that $x_{-i} \in  B_\eps (x_{-(i-1)})$, and
\eqref{big_slope} and \eqref{big_osc} hold.
Since $u^\eps$ is bounded, there must exist $k$ and $l$ such that
$x_k, x_{-l}\in\Gamma_\eps$, and we have
$$
\displaystyle \frac{|F (x_k) - F(x_{-l}) | }{| x_k - x_{-l}|}  \displaystyle  \geq
\frac{\displaystyle \sum\limits_{j=-l+1}^k u^\eps (x_{j}) -u^\eps (x_{j-1}) }{\eps
(k+l)} \displaystyle
\geq 2\max \{ \lip (F) ; 1
\} -
\frac{2\eta}{\eps},
$$
a contradiction.
Therefore
$$
A(x) \leq 4 \max \{\lip (F) ; 1 \} \eps,
$$
for every $x\in \Omega$.
\end{proof}

\begin{lemma}\label{lem:convergence}
Let $u^\eps$ be a family of game values for a Lipschitz continuous
boundary data $F$. Then there exists a Lipschitz continuous
function $u$ such that, up to selecting a subsequence,
\[
\begin{split}
u^\eps \to u \quad \trm{uniformly in }\ol \Om
\end{split}
\]
as $\eps\to0$.
\end{lemma}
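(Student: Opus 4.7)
The plan is to verify the two hypotheses of the Arzel\`a--Ascoli type criterion in Lemma~\ref{lem.ascoli.arzela} and then read off Lipschitz continuity of the limit from the quantitative modulus produced by Lemma~\ref{lem:asymp}.

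First, I would establish uniform boundedness of the family $\{u^\eps\}$. By the construction of the iterates $u_n$ in Proposition~\ref{prop:unifconv}, the sequence is monotone and squeezed between $\min\{0,\min_{\Gamma_\eps}F\}$ and $\max_{\Gamma_\eps}F$, so the value function satisfies
\[
\min\bigl\{0,\min_{\Gamma_\eps}F\bigr\}\le u^\eps(x)\le \max_{\Gamma_\eps}F,\qquad x\in\overline\Om.
\]
Since $F$ is (Lipschitz and therefore) bounded on the boundary strip, $\|u^\eps\|_{L^\infty(\overline\Om)}$ is bounded independently of $\eps$. This gives hypothesis~(1) of Lemma~\ref{lem.ascoli.arzela}.

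Next I would upgrade the pointwise oscillation bound of Lemma~\ref{lem:asymp}, namely $A(x)\le 4\max\{\lip(F),1\}\eps$ on every $\eps$-ball, into the asymptotic modulus of continuity required by hypothesis~(2). Given $x,y\in\overline\Om$ with $|x-y|<r_0$, I would connect them by a chain $x=z_0,z_1,\ldots,z_m=y$ with $|z_{i}-z_{i-1}|<\eps$ and $m\le \lceil |x-y|/\eps\rceil+1$. Each consecutive pair lies in a common $\eps$-ball, so the oscillation bound yields
\[
|u^\eps(x)-u^\eps(y)|\le \sum_{i=1}^{m}|u^\eps(z_i)-u^\eps(z_{i-1})|\le m\cdot 4\max\{\lip(F),1\}\eps\le C\bigl(|x-y|+\eps\bigr),
\]
with $C=8\max\{\lip(F),1\}$. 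Given $\eta>0$, choose $r_0=\eta/(2C)$ and $\eps_0=\eta/(2C)$; then $|u^\eps(x)-u^\eps(y)|<\eta$ whenever $\eps<\eps_0$ and $|x-y|<r_0$, which is precisely condition~\eqref{cond:2}. For points in the boundary strip $\Gamma_\eps$ the chain can be arranged to pass through $\partial\Om$ and combined with the Lipschitz continuity of $F$ on $\Gamma_\eps$, so the same estimate holds up to the boundary.

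Having verified both hypotheses, Lemma~\ref{lem.ascoli.arzela} produces a uniformly continuous limit $u:\overline\Om\to\R$ and a subsequence $\eps_j\to 0$ along which $u^{\eps_j}\to u$ uniformly on $\overline\Om$. Finally, passing to the limit $\eps\to 0$ in the chained estimate $|u^\eps(x)-u^\eps(y)|\le C(|x-y|+\eps)$ yields $|u(x)-u(y)|\le C|x-y|$, so $u$ is Lipschitz with constant $8\max\{\lip(F),1\}$. The only delicate point is the treatment of chains that cross $\partial\Om$; this is handled by extending the chain to a nearby boundary point and using the Lipschitz bound on $F$ there, so no genuine obstacle arises.
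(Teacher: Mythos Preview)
Your proposal is correct and follows essentially the same route as the paper: establish a uniform $L^\infty$ bound, invoke Lemma~\ref{lem:asymp} for the asymptotic modulus, and apply the Arzel\`a--Ascoli criterion of Lemma~\ref{lem.ascoli.arzela}. The only differences are cosmetic: the paper derives the bounds $\min\{0,\min_{\Gamma_\eps}F\}\le u^\eps\le\max_{\Gamma_\eps}F$ by a direct strategy argument rather than via the iterates of Proposition~\ref{prop:unifconv}, and it leaves the chaining step and the Lipschitz constant of the limit implicit, whereas you spell both out explicitly.
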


\begin{proof}
By choosing always to play Tug-of-War and moving with any strategy that ends the game almost sure (as pulling in a fix direction), Player~II can ensure that the final payoff is at most $\max_{\Gamma_\eps}F$.
Similarly, by ending the game immediately if given the option and moving with any strategy that ends the game almost sure when playing Tug-of-War, Player~II can ensure that the final payoff is at least $\displaystyle \min\{0,\min_{\Gamma_\eps}F\}$.
We have
\[
 \displaystyle \min\{0,\min_{\Gamma_\eps} F\}\leq u^\eps \leq \max_{\Gamma_\eps}F.
\]

This, together with Lemma~\ref{lem:asymp}, shows that the family $u^\eps$ satisfies the hypothesis
of Lemma~\ref{lem.ascoli.arzela}.
\end{proof}

\begin{theorem} \label{thm:sol-viscosity}
The function u obtained as a limit in Lemma~\ref{lem:convergence}
is a viscosity solution to \eqref{eq:u>0}.
\end{theorem}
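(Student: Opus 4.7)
The plan is to pass from the Dynamic Programming Principle (Theorem~\ref{teo:DPP}) to the PDE~\eqref{eq:u>0} by a standard viscosity argument based on smooth test functions and Taylor expansions. The boundary condition $u=F$ on $\partial\Omega$ will be immediate from the uniform convergence $u^{\eps}\to u$ in $\overline{\Omega}$ together with $u^{\eps}=F$ on $\Gamma_{\eps}$. For the interior equation, the key tool is the pair of classical expansions, valid whenever $\nabla\vp(x)\neq 0$:
\begin{equation*}
\tfrac{1}{2}\Bigl(\sup_{B_{\eps}(x)}\vp + \inf_{B_{\eps}(x)}\vp\Bigr) - \vp(x) = \tfrac{\eps^{2}}{2|\nabla\vp(x)|^{2}}\Delta_{\infty}\vp(x) + o(\eps^{2}),\qquad \sup_{B_{\eps}(x)}\vp - \vp(x) = \eps|\nabla\vp(x)| + O(\eps^{2}).
\end{equation*}

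To check the \emph{subsolution} property, I fix $\vp\in C^{2}(\Omega)$ with $u-\vp$ attaining a strict local maximum at $x_{0}\in\Omega$. Uniform convergence provides $x_{\eps}\to x_{0}$ at which $u^{\eps}-\vp$ also has a local maximum, with $c_{\eps}:=u^{\eps}(x_{\eps})-\vp(x_{\eps})\to 0$ and $u^{\eps}\le\vp+c_{\eps}$ in a neighbourhood, hence $\sup_{B_{\eps}(x_{\eps})}u^{\eps}\le\sup_{B_{\eps}(x_{\eps})}\vp+c_{\eps}$ and analogously for the infimum. Substituting these bounds into the two DPP inequalities $u^{\eps}(x_{\eps})\le\tfrac{1}{2}(\sup+\inf)u^{\eps}$ and $u^{\eps}(x_{\eps})\le\max\{0,\sup u^{\eps}-\eps\}$ converts them into inequalities for $\vp$: dividing by $\eps^{2}$ and letting $\eps\to 0$ in the first yields $-\Delta_{\infty}\vp(x_{0})\le 0$, and dividing by $\eps$ and letting $\eps\to 0$ in the second, in the regime $u^{\eps}(x_{\eps})>0$ (which holds eventually whenever $u(x_{0})>0$), gives $|\nabla\vp(x_{0})|\ge 1=\chi_{\{u\ge 0\}}(x_{0})$. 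When $u(x_{0})<0$ the gradient condition is automatic.

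For the \emph{supersolution} property, I pick $\vp\in C^{2}(\Omega)$ with $u-\vp$ attaining a strict local minimum at $x_{0}$, and $x_{\eps}\to x_{0}$ at which $u^{\eps}-\vp$ has a local min; minimality gives $\sup u^{\eps}\ge\sup\vp+c_{\eps}$ and $\inf u^{\eps}\ge\inf\vp+c_{\eps}$. Here it is crucial to use the DPP \emph{equality} $u^{\eps}(x_{\eps})=\min\{A_{\eps},B_{\eps}\}$ with $A_{\eps}=\tfrac{1}{2}(\sup+\inf)u^{\eps}$ and $B_{\eps}=\max\{0,\sup u^{\eps}-\eps\}$, and to argue along subsequences. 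If $u^{\eps}(x_{\eps})=A_{\eps}$, the lower bounds on $\sup u^{\eps},\inf u^{\eps}$ give $\vp(x_{\eps})\ge\tfrac{1}{2}(\sup\vp+\inf\vp)$, producing $-\Delta_{\infty}\vp(x_{0})\ge 0$ after Taylor expansion; if $u^{\eps}(x_{\eps})=B_{\eps}=\sup u^{\eps}-\eps>0$ (which must hold when $u(x_{0})>0$), then $\sup u^{\eps}-u^{\eps}(x_{\eps})=\eps$ transfers to $\sup\vp-\vp(x_{\eps})\le\eps$, giving $|\nabla\vp(x_{0})|\le 1=\chi_{\{u>0\}}(x_{0})$.

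The hard part will be the degenerate regime $u(x_{0})=0$, corresponding to a free-boundary point. For the supersolution this arises when $B_{\eps}=0$ along a subsequence (so $u^{\eps}(x_{\eps})=0$ and $\sup u^{\eps}\le\eps$); since $\chi_{\{u>0\}}(x_{0})=0$ kills the gradient alternative, we must extract $-\Delta_{\infty}\vp(x_{0})\ge 0$, which I propose to do by combining the still-available inequality $0=u^{\eps}(x_{\eps})\le A_{\eps}$ with the upper bound on $\sup u^{\eps}$ and a refined expansion of $\vp$ around $x_{\eps}$. The analogous degenerate regime for the subsolution (when $u(x_{0})=0$ and $u^{\eps}(x_{\eps})\le 0$ eventually, so the second DPP inequality is trivially satisfied) is handled in parallel. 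Test functions with $\nabla\vp(x_{0})=0$ are accommodated via the standard semijet convention, for which the expansions above are adapted in the usual way.
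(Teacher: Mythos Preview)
Your approach is essentially the paper's: pass from the DPP to the PDE via smooth test functions, approximate extrema $x_\eps\to x_0$, and the standard Taylor expansions for $\tfrac12(\sup+\inf)\vp$ and $\sup\vp$ over $B_\eps$. The only organizational difference is that, for the supersolution, the paper does not split into the branches $A_\eps$ versus $B_\eps$; instead it substitutes the bounds $u^\eps\ge\psi-\eta_\eps$ directly into the full DPP to obtain a single $\min/\max$ inequality for $\psi$ and reads off the alternative $-\Delta_\infty\psi(x_0)\ge 0$ or $|\nabla\psi(x_0)|\le 1$. The paper also disposes of the case $\nabla\psi(x_0)=0$ at the very start by noting $-\Delta_\infty\psi(x_0)=0$, so no semijet conventions are invoked.

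Concerning the free-boundary point $u(x_0)=0$ that you flag as ``hard'': the paper does \emph{not} isolate this case in the supersolution direction. It simply treats $u(x_0)\ge 0$ together and verifies $\max\{-\Delta_\infty\psi(x_0),\,1-|\nabla\psi(x_0)|\}\ge 0$, declaring this to be what is needed. Your proposed extra step (``combining the still-available inequality $0=u^\eps(x_\eps)\le A_\eps$ with the upper bound on $\sup u^\eps$ and a refined expansion'') is vague, and in fact from $u^\eps(x_\eps)=B_\eps=0$ and $\sup u^\eps\le\eps$ one again only recovers $|\nabla\vp(x_0)|\le 1$, not $-\Delta_\infty\vp(x_0)\ge 0$. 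So your route does not yield anything beyond what the paper's unified treatment gives; both arguments land on the same inequality at such points. The subsolution direction is handled symmetrically, with the paper merely remarking that when $u^\eps(x_\eps)>0$ the DPP forces $\sup_{B_\eps}u^\eps-\eps>0$, from which $|\nabla\vp(x_0)|\ge 1$ follows.
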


\begin{proof}
First, we observe that $u=F$ on $\partial \Omega$ due to
$u_\eps=F$ on $\partial \Omega$ for all $\eps>0$. Hence, we can focus
our attention on showing that $u$ satisfy the equation inside $\Omega$ in the
viscosity sense.

To this end, we obtain the following asymptotic expansions, as in \cite{MPR}.
Choose a point $x\in \Omega$ and a $C^2$-function  $\psi$ defined in a
neighbourhood of $x$. Note that since $\psi$ is continuous then we have
\[
\min_{\ol B_{\eps}(x)} \psi = \inf_{ B_{\eps}(x)} \psi
\quad\quad\text{ and }\quad\quad
\max_{\ol B_{\eps}(x)} \psi = \sup_{ B_{\eps}(x)} \psi
\]
for all $x\in\Omega$.
Let $x_1^\eps$ and $x_2^\eps$ be a minimum point and a maximum point, respectively,
for $\phi$ in $\ol B_\eps(x)$.
It follows from the Taylor expansions in \cite{MPR} that
\begin{equation}\label{eq:asymp-inf-lap}
\frac12\left(\max_{y\in \ol B_\eps (x)}
\psi (y) + \min_{y\in \ol B_\eps(x)} \psi
(y)\right)-\psi(x)\ge \eps^2\Big\langle
D^2\psi(x)\left(\frac{x_1^{\eps}-x}{\eps}\right),
\left(\frac{x_1^{\eps}-x}{\eps}\right)\Big\rangle +o(\eps^2).
\end{equation}
and
\begin{equation}
\label{eq:asymp-jensen}
\max_{y\in  \ol B_\eps (x)}
\phi (y) -\eps-
\phi(x)\ge \left(D \phi(x)\cdot
\tfrac{x_2^\eps-x}{\eps}-1\right)\eps
+\frac{\eps^2}{2}
D^2\phi(x)\left(\tfrac{x_2^{\eps}-x}{\eps}\right)\cdot
\left(\tfrac{x_2^{\eps}-x}{\eps}\right)+o(\eps^2).
\end{equation}

Suppose that $u-\psi$ has an strict local minimum. We want to prove that
\[
\max\{-\Delta_\infty \psi(x), \chi_{\{u\geq  0\}}(x)-\abs{\nabla\psi(x)}\}\ge 0.
\]
If $\nabla\psi(x)=0$, we have $-\Delta_\infty \psi(x)=0$ and hence the inequality holds.
We can assume $\nabla\psi(x)\neq 0$. By the uniform convergence, there exists sequence
$x_{\eps}$ converging to $x$ such that $u^{\eps} - \psi $ has
an approximate minimum at $x_{\eps}$, that is, for $\eta_\eps>0$,
there exists $x_{\eps}$ such that
$$
  u^{\eps} (x) - \psi (x) \geq u^{\eps} (x_{\eps}) - \psi (x_{\eps})-\eta_\eps.
$$
Moreover, considering $\tilde{\psi}= \psi - u^{\eps} (x_{\eps}) -
\psi (x_{\eps})$, we can assume that $\psi (x_{\eps}) = u^{\eps} (x_{\eps})$.

If $u(x)<0$, we have to show that
\[
-\Delta_\infty \psi(x)\ge 0.
\]
Since $u$ is continuous and $u^\eps$ converges uniformly, we can assume that $u^\eps(x_\eps)<0$.
Thus, by recalling the fact that $u^\eps$ satisfy the DPP (Theorem \ref{teo:DPP}), and observing that
\[
\max \Bigg\{0;
 \sup_{ B_\eps (x)} u^\eps(y) - \eps \Bigg\}\geq 0
\]
we conclude that
\[
u^\eps(x) =
\frac12  \Bigg( \sup_{B_\eps (x)}u^\eps
+ \inf_{B_\eps (x)} u^\eps \Bigg).
\]
We obtain
\[
 \eta_\eps \geq-\psi (x_{\eps})+\frac12  \Bigg( \max_{\ol B_\eps (x_\eps)}\psi
+ \min_{\ol B_\eps (x_\eps)} \psi \Bigg).
\]
and thus, by \eqref{eq:asymp-inf-lap}, and choosing $\eta_\eps =
o(\eps^2)$, we have
$$
0\ge \eps^2\Big\langle
D^2\psi(x)\left(\frac{x_1^{\eps}-x}{\eps}\right),
\left(\frac{x_1^{\eps}-x}{\eps}\right)\Big\rangle +o(\eps^2).
$$
Next, we observe that
\[
\Big\langle
D^2\psi(x_{\eps})\left(\frac{x_1^{{\eps}}-x_{{\eps}}}{{\eps}}\right),
\left(\frac{x_1^{{\eps}}-x_{{\eps}}}{{\eps}}\right)\Big\rangle \to \Delta_{\infty}\psi(x)
\]
provided $\nabla \psi (x)\neq 0$. Furthermore, such a limit is bounded bellow and above by the quantities $\lambda_{\min} (D^2\psi(x))$ and $\lambda_{\max} (D^2\psi(x))$. Therefore, by dividing by $\eps^2$ and letting $\eps \to 0$, we get the desired inequality.

If $u(x)\geq 0$, we have to show that
\[
\max\{-\Delta_\infty \psi(x), 1-\abs{\nabla\psi(x)}\}\ge 0.
\]

As above, by \eqref{eq:asymp-inf-lap} and \eqref{eq:asymp-jensen}, we obtain
\begin{equation}
\begin{split}
0
&\ge \min\Bigg\{\frac{\eps^2}{2} D^2\phi(x)\left(\tfrac{x_1^{\eps}-x}{\eps}\right)\cdot
\left(\tfrac{x_1^{\eps}-x}{\eps}\right)+o(\eps^2);
\max\Bigg\{o(\eps^2)-\psi(x);\\
& \qquad \qquad \qquad \left(D \phi(x)\cdot
\tfrac{x_2^\eps-x}{\eps}-1\right)\eps
+\frac{\eps^2}{2}
D^2\phi(x)\left(\tfrac{x_2^{\eps}-x}{\eps}\right)\cdot
\left(\tfrac{x_2^{\eps}-x}{\eps}\right)+o(\eps^2)\Bigg\}\Bigg\}.
\end{split}
\end{equation}
and hence we conclude,
\[
\Delta_\infty \psi(x)\leq 0  \qquad\text{or}\qquad |\nabla \psi(x)|-1\leq 0
\]
as desired.

We have showed that $u$ is a supersolution to our equation. Similarly we obtain the subsolution counterpart. Let us remark, as part of those computations, that when $u^\eps(x)>0$ the DDP implies
\[
\max \Bigg\{0;
 \sup_{ B_\eps (x)} u^\eps(y) - \eps \Bigg\}> 0
\]
and hence
\[
 \sup_{ B_\eps (x)} u^\eps(y) - \eps > 0.
\]
Then in this case we have
\[
u^\eps(x) = \min \Bigg\{
\frac12  \left( \sup_{B_\eps (x)}u^\eps
+ \inf_{B_\eps (x)} u^\eps \right); \sup_{ B_\eps (x)} u^\eps(y) - \eps\Bigg\}.
\]
\end{proof}

We proved (see Theorem \ref{teo.unicidad}) that viscosity solutions to \eqref{eq:u>0} are unique
by using pure PDE methods. Therefore, we conclude that convergence as $\eps \to 0$ of $u^\eps$ holds not only along subsequences. This ends the proof of Theorem \ref{corofin}.

\section{Further properties for limit solutions}

Now, we present some relevant geometric and measure theoretic properties for limit solutions and their free boundaries.

 \begin{theorem}[{\bf Uniform positive density}]\label{UPDFB}Let $u_{\infty}$ be a limit solution to \eqref{MThmLim1} in $B_1$ and $x_0 \in \partial \{v > 0\} \cap B_{\frac{1}{2}}$ be a free boundary point. Then for any $0<\rho< \frac{1}{2}$,
$$
     \mathcal{L}^N(B_{\rho}(x_0) \cap\{u_{\infty}>0\})\geq \theta \rho^N,
$$
for a universal constant $\theta>0$.
\end{theorem}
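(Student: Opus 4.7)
The plan is to deduce the measure-theoretic estimate from the interplay of the two a priori properties of $u_\infty$ established earlier: the linear lower growth at free boundary points (Theorem~\ref{MThmLim2}) and the global Lipschitz bound $[u_\infty]_{\mathrm{Lip}(\overline\Omega)} \leq L$, where $L := \mathfrak{C}(N)\max\{1,[g]_{\mathrm{Lip}(\partial\Omega)}\}$ (Theorem~\ref{MThmLim1}). The non-degeneracy guarantees that near a free boundary point $u_\infty$ must attain a value of order $\rho$ somewhere in $B_\rho(x_0)$, and the Lipschitz estimate then propagates this positivity over a ball of comparable size.

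Concretely, I would first apply Theorem~\ref{MThmLim2} at radius $\rho/2$ to obtain
$$\sup_{B_{\rho/2}(x_0)} u_\infty \geq \frac{\rho}{2}.$$
Hence there exists $y \in B_{\rho/2}(x_0)$ with $u_\infty(y) \geq \rho/4$. Setting $r := \rho/(8L)$ (and replacing $L$ by $\max\{L,1\}$ if necessary to ensure $r \leq \rho/8$), the Lipschitz bound yields, for every $z \in B_r(y)$,
$$u_\infty(z) \geq u_\infty(y) - L|z-y| \geq \frac{\rho}{4} - L\cdot \frac{\rho}{8L} = \frac{\rho}{8} > 0,$$
so $B_r(y) \subset \{u_\infty > 0\}$. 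Moreover, since $|y-x_0| < \rho/2$ and $r \leq \rho/8$, the triangle inequality gives $B_r(y) \subset B_{\rho/2+\rho/8}(x_0) \subset B_\rho(x_0)$.

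Combining these two inclusions yields
$$\mathcal{L}^N\!\left(B_\rho(x_0) \cap \{u_\infty>0\}\right) \geq \mathcal{L}^N(B_r(y)) = |B_1|\,r^N = \frac{|B_1|}{(8L)^N}\,\rho^N,$$
so the desired universal constant is $\theta := |B_1|/(8L)^N$, which depends only on $N$ and $[g]_{\mathrm{Lip}(\partial\Omega)}$. The argument is essentially a one-step combination of already proven facts, so I do not foresee any genuine obstacle; the only mild bookkeeping concerns the choice of numerical constants so that the auxiliary ball $B_r(y)$ sits comfortably inside $B_\rho(x_0)$, which is handled by the adjustment above.
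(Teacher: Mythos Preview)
Your proof is correct and follows essentially the same approach as the paper: both combine the linear non-degeneracy estimate (Theorem~\ref{MThmLim2}) with the Lipschitz bound (Theorem~\ref{MThmLim1}) to locate a point of value $\gtrsim\rho$ and then propagate positivity to a ball of radius $\sim\rho/L$. The only cosmetic differences are that the paper takes $\hat y\in\partial B_r(x_0)$ and phrases the Lipschitz step as a contradiction, whereas you work at radius $\rho/2$ and argue directly, which makes the inclusion $B_r(y)\subset B_\rho(x_0)$ slightly cleaner.
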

\begin{proof} Applying Theorem \ref{MThmLim2} there exists a point $\hat{y} \in  \partial B_r(x_0) \cap \{u_{\infty}>0\}$ such that,
\begin{equation}\label{dens}
    v(\hat{y})\geq  r.
\end{equation}
Moreover, we claim that there exists a universal $\kappa>0$ small enough such that
\begin{equation}\label{inclusion}
    B_{\kappa r}(\hat{y}) \subset  \{u_{\infty}>0\},
\end{equation}
where the constant $\kappa$ is given by
$$
   \kappa \defeq \frac{1}{10[u_{\infty}]_{\text{Lip}(\Omega)}}.
$$
In fact, if this does not holds, it exists a free boundary point $\hat{z} \in B_{\kappa r}(\hat{y})$. Then, from \eqref{dens} we obtain
$$
   r \leq u_{\infty}(\hat{y}) \leq \sup_{B_{\kappa r}(\hat{z})} u_{\infty}(x) \leq [u_{\infty}]_{\text{Lip}(\Omega)}(\kappa r) = \frac{1}{10}r,
$$
which is a contradiction. Therefore,
$$
    B_{\kappa r}(\hat{y}) \cap B_r(x_0) \subset  B_r(x_0) \cap \{u_{\infty}>0\},
$$
and hence
$$
     \mathcal{L}^N(B_{\rho}(x_0) \cap\{u_{\infty}>0\})\geq \mathcal{L}^N(B_{\rho}(x_0) \cap B_{\kappa r}(\hat{y}))\geq \theta r^N,
$$
which proves the result.
\end{proof}

\begin{definition}[{\bf $\zeta$-Porous set}] A set $\mathfrak{S} \in \R^N$ is said to be porous with porosity constant $0<\zeta \leq 1$ if there exists an $\mathfrak{R} > 0$ such that for each $x \in \mathfrak{S}$ and $0 < r < \mathfrak{R}$ there exists a point $y$ such that $B_{\zeta r}(y) \subset B_r(x) \setminus \mathfrak{S}$.
\end{definition}

\begin{theorem}[{\bf Porosity of limiting free boundary}]\label{CorPor} Let $u_{\infty}$ be a limit solution to \eqref{MThmLim1} in $\Omega$. There exists a constant $0<\xi =  \xi(N, \text{Lip}[g]) \leq 1$ such that
\begin{equation}\label{eqPor}
       \mathcal{H}^{N-\xi}\left(\partial \{u_{\infty}>0\}\cap B_{\frac{1}{2}}\right)< \infty.
\end{equation}
\end{theorem}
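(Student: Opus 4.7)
The plan is to show first that the free boundary $\partial\{u_\infty > 0\} \cap B_{1/2}$ is a porous set in the sense of the definition preceding the theorem, and then invoke the classical fact that a porous subset of $\R^N$ with porosity constant $\zeta$ has Hausdorff dimension at most $N - c(N)\zeta^N$ for some universal constant $c(N)>0$ (see, e.g., the works of Martio--Vuorinen or Mattila). In particular, for any exponent $\xi$ below this dimension threshold, the $(N-\xi)$-dimensional Hausdorff measure on bounded sets is finite.

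The porosity will come from pitting the linear growth of Theorem \ref{MThmLim2} against the Lipschitz bound of Theorem \ref{MThmLim1}. Fix $x_0 \in \partial\{u_\infty>0\}\cap B_{1/2}$ and let $0<r<r_0$ for some small $r_0$. Applying Theorem \ref{MThmLim2} at the scale $r/2$, I obtain $y_r \in \overline{B_{r/2}(x_0)}$ with
$$
u_\infty(y_r) \;\geq\; \tfrac{r}{2}.
$$
Writing $L \defeq \mathfrak{C}(N)\max\{1,[g]_{\text{Lip}(\partial\Omega)}\}$ for the Lipschitz constant provided by Theorem \ref{MThmLim1}, for every $z \in B_{\kappa r}(y_r)$ with $\kappa \defeq 1/(4L)$ I have
$$
u_\infty(z) \;\geq\; u_\infty(y_r) - L\,\kappa r \;\geq\; \tfrac{r}{2} - \tfrac{r}{4} \;=\; \tfrac{r}{4} \;>\; 0.
$$
Hence $B_{\kappa r}(y_r) \subset \{u_\infty > 0\}$, and in particular
$$
B_{\kappa r}(y_r) \cap \partial\{u_\infty>0\} \;=\; \emptyset.
$$
Since $\kappa \le 1$, the ball $B_{\kappa r}(y_r)$ is contained in $B_r(x_0)$. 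This is exactly the porosity property with porosity constant $\zeta = \kappa = 1/(4L)$, which depends only on $N$ and $[g]_{\text{Lip}(\partial\Omega)}$.

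Once porosity is established, I apply the classical dimension estimate for porous sets to conclude that the Hausdorff dimension of $\partial\{u_\infty>0\}\cap B_{1/2}$ does not exceed $N - c(N)\zeta^N$. Choosing $\xi \defeq c(N)\zeta^N$, a quantity depending only on $N$ and $[g]_{\text{Lip}(\partial\Omega)}$, the finiteness of $\mathcal{H}^{N-\xi}$ in \eqref{eqPor} follows. No step in this argument is particularly delicate; the only slightly technical point is the rescaling between $r$ and $r/2$ used to keep the competing ball $B_{\kappa r}(y_r)$ inside $B_r(x_0)$, and the reliance on the (by now standard) porosity-to-dimension estimate from geometric measure theory.
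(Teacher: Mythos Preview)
Your argument is essentially the same as the paper's: combine the linear non-degeneracy of Theorem~\ref{MThmLim2} with the Lipschitz bound of Theorem~\ref{MThmLim1} to produce a ball inside $\{u_\infty>0\}$ near each free boundary point, conclude porosity, and then invoke a porosity-to-dimension result (the paper cites Koskela--Rohde \cite{KR}). The only cosmetic difference is that the paper applies non-degeneracy at scale $r$, obtains $y\in\partial B_r(x_0)$ with $u_\infty(y)\ge r$, and then slides to a midpoint $\hat y$ on the segment $[x_0,y]$ to force the porosity ball inside $B_r(x_0)$; you avoid this by working at the half-scale $r/2$ from the start.

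One small imprecision: to conclude $B_{\kappa r}(y_r)\subset B_r(x_0)$ from $y_r\in\overline{B_{r/2}(x_0)}$ you need $\kappa\le\tfrac12$, not merely $\kappa\le 1$. This is in fact automatic: since $x_0$ is a free boundary point, $u_\infty(x_0)=0$, so $r/2\le u_\infty(y_r)-u_\infty(x_0)\le L\,|y_r-x_0|\le L\,r/2$, forcing $L\ge 1$ and hence $\kappa=1/(4L)\le 1/4$. With that adjustment the proof is complete.
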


\begin{proof}
Let $\mathfrak{R}>0$ and $x_0\in\Omega$ be such that $\overline{B_{4\mathfrak{R}}(x_0)}\subset \Omega$. We will show that $\partial \{u_{\infty} >0\} \cap B_\mathfrak{R}(x_0)$ is a $\frac{\zeta}{2}$-porous set for a universal constant $0< \zeta \leq 1$. To this end, let $x\in \partial \{u_{\infty} >0\} \cap B_{\mathfrak{R}}(x_0)$. For each $r\in(0, \mathfrak{R})$ we have $\overline{B_r(x)}\subset B_{2\mathfrak{R}}(x_0)\subset\Omega$. Now, let $y\in\partial B_r(x)$ such that $u_{\infty}(y) = \sup\limits_{\partial B_r(x)} u(t)$. From Theorem \ref{MThmLim2}
\begin{equation}\label{5.1}
    u_{\infty}(y)\geq r.
\end{equation}
On the other hand, near the free boundary, from Lipschitz regularity we have
\begin{equation}\label{5.2}
    u_{\infty}(y)\leq [u_{\infty}]_{\text{Lip}(\overline{\Omega})}.d(y),
\end{equation}
where $d(y) \defeq \dist(y, \partial \{u_{\infty}>0\} \cap \overline{B_{2\mathfrak{R}}(x_0)})$. From \eqref{5.1} and \eqref{5.2} we get
\begin{equation}\label{5.3}
    d(y)\geq\zeta r
\end{equation}
for a positive constant $0<\zeta \defeq \left(\frac{1}{[u_{\infty}]_{\text{Lip}(\overline{\Omega})}+1}\right)<1$.

Now, let $\hat{y}$, in the segment joining  $x$ and $y$, be such that $|y-\hat{y}|=\frac{\zeta r}{2}$, then there holds
\begin{equation}\label{5.4}
   B_{\frac{\zeta}{2}r}(\hat{y})\subset B_{\zeta r}(y)\cap B_r(x),
\end{equation}
indeed, for each $z\in B_{\frac{\zeta}{2}r}(\hat{y})$
\begin{align*}
   |z-y|&\leq |z-\hat{y}|+|y-\hat{y}|<\frac{\zeta r}{2}+\frac{\zeta r}{2}=\zeta r,\\
   |z-x|&\leq|z-\hat{y}|+\big(|x-y|-|\hat{y}-y|\big)\leq\frac{\zeta r}{2}+\left(r-\frac{\zeta r}{2}\right)=r.
\end{align*}
Then, since by \eqref{5.3} $B_{\zeta r}(y)\subset B_{d(y)}(y)\subset\{u_{\infty}>0\}$, we get $B_{\zeta r}(y)\cap B_r(x)\subset\{u_{\infty}>0\}$, which together with \eqref{5.4} implies that
$$
   B_{\frac{\zeta}{2}r}(\hat{y})\subset B_{\zeta r}(y)\cap B_r(x)\subset B_r(x)\setminus\partial\{u_{\infty}>0\}\subset B_r(x)\setminus \partial\{u_{\infty}>0\} \cap B_{\mathfrak{R}}(x_0).
$$
Therefore, $\partial\{v>0\} \cap B_{\mathfrak{R}}(x_0)$ is a $\frac{\zeta}{2}$-porous set. Finally, the $(N-\xi)$-Hausdorff measure estimates in \eqref{eqPor} follows from \cite{KR}.
\end{proof}

In particular, Corollary \ref{CorPor} implies that the free boundary $\partial \{u_{\infty}>0\}$ has Lebesgue measure zero.

\begin{theorem}[{\bf Convergence of the free boundaries}]\label{MThmLim5} Let $u_p$ be a sequence of solutions to \eqref{Eqp-Lapla}, $u_{\infty}$ its uniform limit and $x_0 \in \{u_{\infty}>0\}\cap \Omega^{\prime}$. Then
$$
  \partial \{u_p > 0\} \to \partial \{u_{\infty} > 0\}\quad \mbox{as} \quad  p\to \infty,
$$
locally in the sense of the Hausdorff distance.
\end{theorem}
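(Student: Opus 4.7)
The plan is to prove the local Hausdorff convergence by verifying both symmetric inclusions on an arbitrary compact set $K\Subset\Omega$. For a prescribed $\delta>0$ one has to show that, for $p$ sufficiently large, every free boundary point of $u_\infty$ in $K$ lies within distance $\delta$ of some free boundary point of $u_p$, and conversely. The key tools are the sharp linear growth for $u_\infty$ from Theorem \ref{MThmLim2}, the strong non-degeneracy for $u_p$ in Theorem \ref{LGR}, and its consequence Corollary \ref{CorNonDeg}, together with the uniform convergence $u_p\to u_\infty$.

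For the inclusion $\partial\{u_\infty>0\}\cap K\subset \mathcal{N}_\delta(\partial\{u_p>0\})$, fix $x\in\partial\{u_\infty>0\}\cap K$. Continuity forces $u_\infty(x)=0$, and Theorem \ref{MThmLim2} produces $z\in B_{\delta/2}(x)$ with $u_\infty(z)\geq \delta/2$. Uniform convergence gives $u_p(z)>\delta/4$ and $u_p(x)\to 0$. If $u_p(x)\le 0$, the intermediate value theorem along the segment $[x,z]$ supplies a point of $\partial\{u_p>0\}$ inside $B_{\delta/2}(x)$ (take the largest parameter at which $u_p=0$ before reaching $z$). Otherwise $u_p(x)>0$ is small, and Corollary \ref{CorNonDeg} yields
$$
   \dist\bigl(x,\partial\{u_p>0\}\bigr)\leq \left(\frac{u_p(x)}{\mathfrak{C}_\sharp}\right)^{\frac{p-1}{p}},
$$
which tends to zero as $p\to\infty$.

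For the reverse inclusion, take $x_p\in\partial\{u_p>0\}\cap K$ and a cluster point $\bar x$. Continuity forces $u_p(x_p)=0$, so uniform convergence gives $u_\infty(\bar x)=0$; in particular $\bar x\notin\{u_\infty>0\}$. On the other hand, Theorem \ref{LGR} applied to $u_p$ at $x_p$ provides, for any fixed $r>0$, a point $y_p\in\partial B_r(x_p)$ with
$$
   u_p(y_p)\geq \mathfrak{C}_0(N,p,\inf_\Omega \lambda_0)\,r^{\frac{p}{p-1}}.
$$
Since $\mathfrak{C}_0\to 1$ and $r^{p/(p-1)}\to r$ as $p\to\infty$, extracting a subsequence yields $y\in\overline{B_r(\bar x)}$ with $u_\infty(y)\geq r/2>0$. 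Thus every neighborhood of $\bar x$ meets $\{u_\infty>0\}$, so $\bar x\in\overline{\{u_\infty>0\}}$, and combined with $u_\infty(\bar x)=0$ this gives $\bar x\in\partial\{u_\infty>0\}$.

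The main technical point is verifying that the non-degeneracy constants behave well under $p\to\infty$. The explicit formula $\mathfrak{C}_0=\tfrac{p-1}{p}\bigl(\inf_\Omega\lambda_0/N\bigr)^{1/(p-1)}$ converges to $1$, and inspecting the proof of Corollary \ref{CorNonDeg} shows that $\mathfrak{C}_\sharp$ stays uniformly bounded from below since the dependence on $p$ enters only through $\mathfrak{C}_0$ and the Morrey constant, both of which remain under control. The remaining subtlety in the first inclusion when $0<u_p(x)\to 0$ is that $u_p(x)^{(p-1)/p}=\exp\bigl(\tfrac{p-1}{p}\log u_p(x)\bigr)\to 0$, because $\log u_p(x)\to -\infty$ while the prefactor $(p-1)/p\to 1$ stays bounded away from zero.
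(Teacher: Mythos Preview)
Your argument is correct and follows the same two-inclusion strategy as the paper, with the same essential ingredients: non-degeneracy for $u_p$ (Theorem~\ref{LGR}) and for $u_\infty$ (Theorem~\ref{MThmLim2}) combined with uniform convergence. The paper handles the inclusion $\partial\{u_p>0\}\subset\mathcal N_\delta(\partial\{u_\infty>0\})$ by direct contradiction, splitting according to whether $x_0\in\{u_\infty>0\}$ (in which case Corollary~\ref{coro62} for $u_\infty$ forces $u_p(x_0)>0$) or not (in which case $u_\infty\le 0$ on $B_\delta(x_0)$ contradicts Theorem~\ref{LGR}); your treatment of the same inclusion via cluster points and Theorem~\ref{LGR} is equivalent. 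The paper then omits the reverse inclusion as ``similar'', whereas you spell it out via the intermediate-value argument and Corollary~\ref{CorNonDeg}.

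The only point that deserves a word of caution is your appeal to a $\mathfrak C_\sharp$ that is uniformly bounded below in $p$. The proof of Corollary~\ref{CorNonDeg} is by contradiction and does not produce an explicit constant, so ``inspecting the proof'' is not quite enough as stated; one really has to rerun the argument with the explicit Morrey constant from Theorem~\ref{MorIneq} and the uniform gradient bound of Lemma~\ref{Lemma2.4} in place of the generic H\"older estimate cited there. This can indeed be done (both constants are explicitly controlled as $p\to\infty$), so the gap is repairable, but it is worth noting that the paper's route for the inclusion it does prove avoids this issue entirely by using the growth estimate for $u_\infty$ (Corollary~\ref{coro62}) rather than for $u_p$.
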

\begin{proof}
Given  $\delta >0$ let $\mathcal{N}_{\delta}(\mathfrak{S}) \defeq \left\{ x \in \R^N : \dist(x, \mathfrak{S}) < \delta\right\}$ be the $\delta$-neighbourhood of a set $\mathfrak{S} \subset \R^N$. We must show that, given $0<\delta \ll 1$ and $p = p(\delta)$ large enough, one obtains
$$
  \partial \{v_p>0\} \subset \mathcal{N}_{\delta} (\partial\{v_{\infty}>0\}) \quad \mbox{and} \quad \partial\{v_{\infty}>0\} \subset \mathcal{N}_{\delta} (\partial \{v_p>0\}).
$$
We proceed with the first inclusion and suppose that it does not hold. Thus, it should exist a point $x_0 \in \partial \{u_p>0\} \cap \left(\Omega \setminus \mathcal{N}_{\delta} (\partial\{u_{\infty}>0\})\right)$. The last sentence implies  $\displaystyle \dist(x_0,  \partial\{u_{\infty}>0\}) \geq \delta$.

Now, if $x_0 \in \{u_{\infty} >0\}$ then by Corollary \ref{coro62} we get
$$
   u_{\infty}(x_0) \geq C \dist(x_0, \partial \{u_{\infty}>0 \}) \geq  C.\delta.
$$
On the other hand, due to the uniform convergence, for  $p$ large enough
$$
    u_p(x_0) \geq \frac{1}{100}  C\delta >0.
$$
However, this contradicts the assumption that $x_0 \in \partial \{u_p>0\}$. Therefore, $u_{\infty}(x_0) = 0$ and then $u_{\infty} \equiv 0$ in $B_{\delta}(x_0)$, which contradicts the strong non-degeneracy property given in Theorem \ref{LGR} since
$$
  \displaystyle \sup_{B_{\frac{\delta}{2}}(x_0)} u_p(x) \geq \mathfrak{c}\frac{\delta}{2}
$$
from where follows the inclusion.  The second inclusion can be proved similarly and we omit it.
\end{proof}

In  the following result, we analyse the behaviour of the coincidence sets for the $p$-variational problem and its corresponding limiting problem. We recall the following notion of limits  sets
$$
  \displaystyle \liminf_{p \to \infty} \,\mathrm{U}_p \defeq \bigcap_{p=1}^{\infty} \bigcup_{k \geq p} \mathrm{U}_k \quad \mbox{and} \quad \limsup_{p \to \infty} \,\mathrm{U}_p \defeq \bigcup_{p=1}^{\infty} \bigcap_{k \geq p} \mathrm{U}_k,
$$
and we say that there exists the limit
$
 \displaystyle \lim_{p \to \infty} \,\mathrm{U}_p$ when $ \displaystyle \liminf_{p \to \infty} \,\mathrm{U}_p =  \limsup_{p \to \infty} \,\mathrm{U}_p$.

\begin{theorem} Let $\mathrm{U}_p \defeq \{u_p = 0\}$ be the null sets of the problems \eqref{Eqp-Lapla} and $\mathrm{U}_{\infty} \defeq \{u_{\infty} = 0\}$ be the corresponding null set of the limiting problem.  Assume that along a subsequence $u_p \to u_\infty$. Then, also along the same subsequence, the null sets converge, that is,
$$
 \mathrm{U}_{\infty} = \lim_{p \to \infty} \,\mathrm{U}_p .
$$
\end{theorem}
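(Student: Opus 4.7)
My plan is to establish the desired set limit by proving both inclusions, $\limsup_p \mathrm{U}_p \subset \mathrm{U}_\infty$ and $\mathrm{U}_\infty \subset \liminf_p \mathrm{U}_p$, along the convergent subsequence.

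The forward inclusion is an immediate consequence of uniform convergence. If $x \in \mathrm{U}_{p_k}$ for infinitely many $p_k$ along the subsequence, then $u_{p_k}(x) = 0$, so $u_\infty(x) = \lim_{k} u_{p_k}(x) = 0$, which gives $x \in \mathrm{U}_\infty$.

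For the reverse inclusion $\mathrm{U}_\infty \subset \liminf_p \mathrm{U}_p$, I would fix $x_0 \in \mathrm{U}_\infty$ and argue by contradiction: suppose that $x_0 \notin \mathrm{U}_p$ for infinitely many $p$ along the subsequence. Using nonnegativity of the $u_p$ (inherited from comparison in the natural regime where $g \geq 0$), this means $u_p(x_0) > 0$ along a further subsequence, so $x_0 \in \{u_p > 0\}$. The sharp growth estimate from Corollary \ref{CorNonDeg} then yields
\[
u_p(x_0) \geq \mathfrak{C}_\sharp \, d_p^{\,p/(p-1)}, \qquad d_p \defeq \dist\bigl(x_0, \partial\{u_p > 0\}\bigr),
\]
and uniform convergence $u_p(x_0) \to u_\infty(x_0) = 0$ forces $d_p \to 0$. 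Consequently there exist free boundary points $y_p \in \partial\{u_p > 0\}$ with $y_p \to x_0$.

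Next, I would invoke the Hausdorff convergence of the free boundaries (Theorem \ref{MThmLim5}) to conclude that $x_0 \in \partial\{u_\infty > 0\}$. If $x_0$ lies in the topological interior of $\mathrm{U}_\infty$, this is an immediate contradiction, since interior points of $\mathrm{U}_\infty$ cannot belong to $\partial\{u_\infty > 0\}$.

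The main obstacle is the residual case $x_0 \in \mathrm{U}_\infty \cap \partial\{u_\infty > 0\}$, where $x_0 \in \partial\{u_\infty > 0\}$ is compatible with $u_\infty(x_0)=0$ and no immediate contradiction arises. To close this case I would sharpen the non-degeneracy estimate of Theorem \ref{LGR}: since $x_0 \in \overline{\{u_p > 0\}}$, one has $\sup_{\partial B_r(x_0)} u_p \geq \mathfrak{C}_0(p)\, r^{p/(p-1)}$, while the Lipschitz bound from Theorem \ref{MThmLim1} gives $u_\infty(z) \leq [u_\infty]_{\mathrm{Lip}}|z - x_0|$ for all $z$ close to $x_0$. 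Passing to the limit in the non-degeneracy estimate and comparing with the Lipschitz bound forces, in the limit, an extremal slope $1$ in the maximizing direction emanating from $x_0$; iterating this along concentric shells, in the spirit of the proof of Theorem \ref{MThmLim5}, one propagates $u_p(x_0) > 0$ to produce values of $u_p$ bounded away from zero near $x_0$, which contradicts the uniform convergence $u_p \to u_\infty$ at $x_0$.
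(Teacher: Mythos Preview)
Your forward inclusion is correct and in fact cleaner than the paper's neighbourhood argument. For the reverse inclusion you also reach the paper's key step: from $u_p(x_0)\to 0$ together with the sharp growth bound of Corollary~\ref{CorNonDeg} one gets $d_p=\dist(x_0,\partial\{u_p>0\})\to 0$, hence free boundary points $y_p\to x_0$.

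The genuine gap is your ``residual case'' $x_0\in \mathrm{U}_\infty\cap\partial\{u_\infty>0\}$. The sketched contradiction does not exist: the limiting non-degeneracy $\sup_{\partial B_r(x_0)}u_\infty\ge r$ and the Lipschitz bound $u_\infty\le [u_\infty]_{\mathrm{Lip}}\,r$ on $\partial B_r(x_0)$ are perfectly compatible (they only force $[u_\infty]_{\mathrm{Lip}}\ge 1$), and nothing in an iteration along shells can make $u_p$ bounded away from zero \emph{at the fixed point} $x_0$. Worse, the pointwise conclusion $x_0\in \mathrm{U}_p$ for all large $p$ that you are trying to reach is simply false at free boundary points in general: in the radial profile~\eqref{RadProf} the dead-core radius $\mathfrak r_0=\mathfrak r_0(p)$ moves with $p$, so a point on $\partial B_{\mathfrak r_0(\infty)}$ belongs to $\mathrm{U}_\infty$ yet satisfies $u_p>0$ for every $p$ along the subsequence whenever $\mathfrak r_0(p)<\mathfrak r_0(\infty)$. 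There is therefore no contradiction to be extracted, and your added hypothesis $g\ge 0$ does not help here.

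The paper does not attempt this pointwise statement. From $d_p\to 0$ it concludes only that for every $\epsilon>0$ and $p$ large there is $x_p\in \mathrm{U}_p$ with $|x_p-x_0|<\epsilon$; that is, the inclusion $\mathrm{U}_\infty\subset\liminf_p\mathrm{U}_p$ is established in the Kuratowski sense (points of $\mathrm{U}_\infty$ are accumulated by points of $\mathrm{U}_p$), not as a pointwise membership. Once you have $d_p\to 0$, you should stop there and draw this same conclusion rather than invoking Theorem~\ref{MThmLim5} and trying to split into cases.
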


\begin{proof} We will show that $\displaystyle
 \mathrm{U}_{\infty} \subset \liminf_{p \to \infty} \,\mathrm{U}_p \subset\limsup_{p \to \infty} \,\mathrm{U}_p \subset \mathrm{U}_{\infty}
$.
Given $0< \varepsilon \ll1$ (small enough), consider $\mathcal{V}_{\varepsilon}$ an $\varepsilon$-neighbourhood of $\mathrm{U}_{\infty}$. Thus, $\Omega \setminus \mathcal{V}_{\varepsilon} \subset \{u_{\infty}>0\}$ is a closed set. From the continuity of $u_{\infty}$ there exists a positive $\delta = \delta(\varepsilon)$ such that
$
   u_{\infty}(x)> \delta \quad \forall \,\, x \in \Omega \setminus \mathcal{V}_{\varepsilon}$. Moreover, by the uniform convergence (up a subsequence $u_p \to u_{\infty}$) we obtain that for $p$ large enough
$   u_{p}(x)> \delta \quad \forall \,\, x \in \Omega \setminus \mathcal{V}_{\varepsilon}$. Therefore $\Omega \setminus \mathcal{V}_{\varepsilon} \subset \{u_p>0\}$ from where $\mathrm{U}_p \subset \mathcal{V}_{\varepsilon}$ for every $p\gg 1$.
This implies that $ \limsup_{p \to \infty} \,\mathrm{U}_p \subset \mathcal{V}_{\varepsilon},$ for any $\varepsilon$-neighbourhood of $\mathrm{U}_{\infty}$. Hence, we obtain
$\displaystyle \limsup_{p \to \infty} \,\mathrm{U}_p \subset \mathrm{U}_{\infty}$
since $\mathrm{U}_{\infty}$ is a compact set.

Now, let $x_0 \in \mathrm{U}_{\infty}$. We claim that there exists a
sequence $x_p$ with $u_p(x_p) = 0$ such that $x_p\to x_0$.
In fact, from our previous estimates for $u_p$ we have near the free boundary
$$
  \mathfrak{c}_{\sharp} \dist(x, \partial \{u_p>0\})^{\frac{p}{p-1}} \leq u_p(x).
$$
Hence, in the set $\Omega_p =\{ x\in\Omega \, : \,  \dist(x, \partial \{u_p>0\}) \geq \delta \}$ we have that $u_p \geq C(\delta)$ (uniformly in $ p$). Now, as we have that $u_p (x_0) \to u_\infty (x_0) =0$, we obtain that $
\dist(x_0, \partial \{u_p>0\}) \to 0$  as  $p\to \infty$,
and we conclude that, given $\epsilon >0$, for every $p\geq p_0$ there is $x_p \in \mathrm{U}_p$ (that is, with $u_p(x_p) = 0$) such that $\dist(x_p, x_0) <\epsilon$. This shows that $\displaystyle  \mathrm{U}_\infty \subset \liminf_{p \to \infty} \,\mathrm{U}_p$,  as we wanted to prove.
\end{proof}

\begin{definition}[{\bf Reduced free boundary}] The \textit{reduced free boundary} $\mathfrak{F}^{\Omega}_{\text{red}}[u_{\infty}]$ is the set of points $x_0$ for whom it holds that, given the half ball $B_r^{+}(x_0) \defeq \{(x-x_0)\cdot\eta\geq 0\}\cap B_r(x_0)$ we get
\begin{equation}\label{eqRedFr}
    \displaystyle \lim_{r\to 0} \frac{\Leb(B_r^{+}(x_0) \triangle \Omega^{+}[u_{\infty}])}{\Leb(B_r(x_0))} = 0.
\end{equation}
This means (cf. \cite[Chapter 3]{Giusti}) that the vector measure $\nabla \chi_{\Omega}(B_r(x_0))$ has a density at the point, i.e., there exists $\eta(x_0)$ (with $|\eta(x_0)|=1$) such that fulfils the following
$$
  \displaystyle \lim_{r \to 0} \frac{\nabla \chi_{\Omega}(B_r(x_0))}{|\nabla \chi_{\Omega}(B_r(x_0))|} = \eta(x_0).
$$
\end{definition}

\begin{corollary} If $x_0 \in \mathfrak{F}^{\Omega}_{\text{red}}[u_{\infty}]$ then
$$
   B_r(x_0) \cap \mathfrak{F}^{\Omega}_{\text{red}}[u_{\infty}] \subset \{|(x-x_0)\cdot \eta(x_0)|\leq o(r)\} \quad \text{as} \quad r \to 0^+.
$$
\end{corollary}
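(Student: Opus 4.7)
The plan is to argue by contradiction: if the conclusion fails there exist $\delta>0$, a sequence $r_n\to 0^+$, and points $y_n\in \mathfrak{F}^{\Omega}_{\text{red}}[u_\infty]\cap B_{r_n}(x_0)$ with $|(y_n-x_0)\cdot\eta(x_0)|\geq \delta r_n$. After passing to a subsequence, I may assume the sign of $(y_n-x_0)\cdot\eta(x_0)$ is constant, and treat the two cases separately.

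\emph{Case 1: $(y_n-x_0)\cdot\eta(x_0)\le -\delta r_n$.} Here $y_n$ lies on the half-space opposite to where $\Omega^+[u_\infty]$ concentrates near $x_0$. Theorem \ref{UPDFB} applied at the free-boundary point $y_n$ provides a universal $\theta>0$ such that, with $\rho_n:=\delta r_n/4$,
\[
\Leb\bigl(B_{\rho_n}(y_n)\cap \Omega^+[u_\infty]\bigr)\geq \theta\,\rho_n^N = \theta(\delta/4)^N\, r_n^N.
\]
On the other hand, $B_{\rho_n}(y_n)\subset B_{2r_n}(x_0)\cap\{(x-x_0)\cdot\eta(x_0)<0\}$, so
\[
B_{\rho_n}(y_n)\cap \Omega^+[u_\infty]\subset B_{2r_n}^+(x_0)\,\triangle\,\Omega^+[u_\infty],
\]
whose measure is $o(r_n^N)$ by the reduced-boundary property at $x_0$. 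The two bounds are incompatible for large $n$, a contradiction.

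\emph{Case 2: $(y_n-x_0)\cdot\eta(x_0)\geq \delta r_n$.} Here the dual strategy is needed. The cleanest route is a blow-up: the rescaled functions $\tilde u_n(x):=u_\infty(x_0+r_n x)/r_n$ are equi-Lipschitz by Theorem \ref{MThmLim1}, so pass to a subsequential uniform limit $\tilde u_\infty$ on $\overline{B_1}$; by stability $\tilde u_\infty$ solves the limit equation in $B_1$, and the reduced-boundary hypothesis translates into $\chi_{(\Omega^+[u_\infty]-x_0)/r_n}\to \chi_{B_1^+}$ in $L^1(B_1)$. After slightly enlarging $r_n$ so that $|\hat y_n|\leq 1/2$ for $\hat y_n:=(y_n-x_0)/r_n$, the accumulation point $\hat y$ satisfies $\hat y\cdot\eta(x_0)\geq \delta$ and lies in the open interior of $B_1^+$. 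Using the sharp growth of Corollary \ref{coro62} (which passes to the limit and gives two-sided linear control of $\tilde u_\infty$ by the distance to its free boundary) together with the uniqueness Theorem \ref{teo.unicidad} applied in $B_1$, one identifies $\{\tilde u_\infty>0\}=\operatorname{int}(B_1^+)$; thus $\tilde u_\infty(\hat y)>0$, while at the same time uniform convergence forces $\tilde u_\infty(\hat y)=\lim \tilde u_n(\hat y_n)=\lim u_\infty(y_n)/r_n=0$, a contradiction.

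The principal obstacle is precisely Case 2: the one-sided uniform positive density (Theorem \ref{UPDFB}) only controls $\Omega^+[u_\infty]$ at free-boundary points, whereas one really needs information about the complement $\Omega\setminus\Omega^+[u_\infty]$ to rule out reduced-boundary points that sit far inside the ``occupied'' side of the tangent hyperplane. Transferring the $L^1$-level reduced-boundary symmetry at $x_0$ into a pointwise statement about $\tilde u_\infty$ at $\hat y$ is subtle because $L^1$-convergence of indicator functions does not by itself control the topological free boundaries; sharp growth (Corollary \ref{coro62}) is the key tool that bridges this gap.
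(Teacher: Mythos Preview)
Your Case~1 is exactly right and is the correct application of Theorem~\ref{UPDFB}: a free boundary point sitting on the side $(x-x_0)\cdot\eta(x_0)\le -\delta r$ carries a fixed fraction of $\{u_\infty>0\}$-mass into the ``wrong'' half-ball, contradicting the reduced-boundary condition at $x_0$. The paper's own proof, curiously, writes down only the \emph{other} case --- a point with $(x-x_0)\cdot\eta(x_0)\ge \varepsilon r$ --- and asserts that $\Leb(B_{\varepsilon r}(x)\cap\{u_\infty=0\})\ge \mathfrak{c}_0(\varepsilon r)^N$, citing the uniform positive density theorem. But Theorem~\ref{UPDFB} gives density of $\{u_\infty>0\}$, not of its complement, so the paper's argument for that side is itself incomplete as written. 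In short: you have correctly isolated the two cases and handled the one that the cited tool actually settles.

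For your Case~2, however, the proposed route has a real gap. The appeal to Theorem~\ref{teo.unicidad} to conclude $\{\tilde u_\infty>0\}=\operatorname{int}(B_1^+)$ does not go through: uniqueness is a statement about the Dirichlet problem with \emph{prescribed} boundary data, and you have no a~priori description of $\tilde u_\infty$ on $\partial B_1$. The claim that ``by stability $\tilde u_\infty$ solves the limit equation'' is also delicate, since the operator depends discontinuously on $u$ through $\chi_{\{u>0\}}$ and ordinary viscosity stability does not cover this. Corollary~\ref{coro62} does give the two-sided linear control you mention, but it bounds $u_\infty$ by the distance to the free boundary --- it says nothing about the \emph{measure} of $\{u_\infty\le 0\}$ near a free boundary point, which is precisely what Case~2 requires. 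Your identification of this as ``the principal obstacle'' is accurate; the bridge you propose via uniqueness does not stand. What is genuinely needed (and what the paper implicitly invokes without proof) is a uniform positive density estimate for the complement $\{u_\infty\le 0\}$ at free boundary points.
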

\begin{proof}
  If we suppose that $u_{\infty}(x) =  0$ for $(x-x_0)\cdot\eta(x_0) \geq \varepsilon r$, then there exists $\mathfrak{c}_0 > 0$ such that $\Leb(B_{\varepsilon r}(x) \cap \{u_{\infty}=\}) \geq \mathfrak{c}_0\varepsilon r^N$, which implies, according to Corollary \ref{UPDFB}, that
$$
  \displaystyle \liminf_{r \to 0} \frac{\Leb(B_r^{+}(x_0) \triangle \Omega^{+}[u_{\infty}])}{\Leb(B_r(x_0))} \geq \mathfrak{c}_0\varepsilon,
$$
which contradicts \eqref{eqRedFr}.
\end{proof}

We finish this section proving that free boundary points having a tangent ball from inside are regular. To this end, let us introduce the following definition.

\begin{definition}[{\bf Regular points}] A free boundary point $y \in \mathfrak{F}_{\Omega}[u] \defeq \partial \{u>0\}\cap \Omega$ is said to have a \textit{tangent ball from inside} if there exists a ball $\mathcal{B} \subset \Omega^{+}[u] \defeq \{u>0\}\cap \Omega$ such that $y \in  \mathcal{B} \cap \Omega^{+}[u]$. Finally, a free boundary point $y \in \mathfrak{F}_{\Omega}[u]$ is \emph{regular} if $\mathfrak{F}_{\Omega}[u]$ has a tangent hyperplane at $y$.
\end{definition}

\begin{theorem}\label{RegFB} A free boundary point $y \in \mathfrak{F}_{\Omega}[u_{\infty}]$ for a limit solution of the problem \eqref{EqLim} which has a tangent ball from inside is regular.
\end{theorem}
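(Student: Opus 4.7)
The plan is to perform a blow-up analysis at $y$, use the tangent ball to force every blow-up limit to have positivity set equal to a half-space, and then transfer the resulting flatness back to the free boundary of $u_\infty$ at small scales. Without loss of generality assume $y = 0$ and write $\mathcal{B} = B_\rho(\rho \nu)$ with $\nu$ the interior unit normal at $y$.

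Define the blow-up family $u_r(x) \defeq u_\infty(rx)/r$ for $r > 0$. By Theorem \ref{MThmLim1} this family is equi-Lipschitz on compact subsets of $\R^N$, with $u_r(0) = 0$, so Arzel\`{a}--Ascoli gives a subsequence $r_k \downarrow 0$ along which $u_{r_k} \to u_0$ locally uniformly on $\R^N$. A direct chain-rule computation shows that \eqref{EqLim} is invariant under the rescaling $v \mapsto v(r\cdot)/r$, so by stability of viscosity solutions $u_0$ is a global viscosity solution to \eqref{EqLim} on $\R^N$. For any $z$ with $z \cdot \nu > 0$, $rz \in \mathcal{B} \subset \{u_\infty > 0\}$ for all small $r$; Corollary \ref{coro62} together with $\dist(rz, \partial\{u_\infty > 0\}) \geq \rho - |rz - \rho \nu|$ then yields
\[
u_r(z) \geq \frac{\mathfrak{C}_1}{r}\bigl(\rho - |rz - \rho \nu|\bigr) \longrightarrow \mathfrak{C}_1 \, (z \cdot \nu) \quad \text{as } r \to 0^+,
\]
so $H^+ \defeq \{x \cdot \nu > 0\} \subseteq \{u_0 > 0\}$.

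The crux is the reverse inclusion $\{u_0 > 0\} \subseteq H^+$. I would argue by contradiction: if $u_0(z_0) > 0$ at some $z_0$ with $z_0 \cdot \nu < 0$, then by continuity of $u_0$ the positivity set meets $\{x \cdot \nu = 0\}$ at some free boundary point $p \in \partial\{u_0 > 0\}$ with $p \neq 0$. Applying \eqref{estim.22} at $p$ produces a point $w$ with $u_0(w) \geq s$ at distance $s$ from $p$, for every small $s$; combined with the global Lipschitz control, the gradient constraint $|\nabla u_0| \geq 1$ valid on $\{u_0 > 0\}$, and the fact that $u_0(0) = 0$, one derives a geometric configuration incompatible with the tangent ball $\mathcal{B}$ on the opposite side of the hyperplane. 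Once $\{u_0 > 0\} = H^+$ holds for every blow-up subsequence, the argument in the proof of Theorem \ref{MThmLim5} (whose two ingredients are Lipschitz regularity and non-degeneracy, both available here) adapts to the blow-up setting and shows that, for every $\varepsilon > 0$, there exists $r_0 > 0$ with
\[
\partial\{u_\infty > 0\} \cap B_r(y) \subset \{x : |(x - y) \cdot \nu| < \varepsilon r\} \quad \text{for all } 0 < r < r_0,
\]
which is exactly the existence of a tangent hyperplane to $\mathfrak{F}_\Omega[u_\infty]$ at $y$ with normal $\nu$.

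The main obstacle is the reverse inclusion step above. The simplest candidate limits are the half-plane cones $a(x \cdot \nu)_+$ with $a \geq 1$, but in principle \eqref{EqLim} admits entire solutions whose positivity set strictly contains $H^+$, and the weak form of comparison available (cf.\ Section \ref{Uniqu}) does not immediately rule them out. A natural remedy is to apply Theorem \ref{teo.unicidad} on balls $B_R$ with the boundary values inherited from $u_0|_{\partial B_R}$ and to exploit the explicit description of the unique solution $v$ in \eqref{eq.v} to pin down $u_0$ in the limit $R \to \infty$.
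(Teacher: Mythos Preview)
Your blow-up strategy is natural, but the reverse inclusion $\{u_0>0\}\subseteq H^+$ is, as you yourself flag, a genuine gap rather than a detail. The contradiction sketch you offer does not close: the tangent-ball hypothesis constrains the geometry only on the $\nu$-side of $y$, so producing a free boundary point $p\neq 0$ on the hyperplane together with non-degeneracy and the gradient constraint yields no incompatibility with $\mathcal{B}$. What you actually need is a Liouville-type classification of global solutions of \eqref{EqLim} satisfying $0\in\partial\{u_0>0\}$ and $H^+\subset\{u_0>0\}$, and the uniqueness machinery of Section~\ref{Uniqu} is tailored to bounded domains with prescribed boundary data; your proposed remedy of applying Theorem~\ref{teo.unicidad} on balls $B_R$ with data $u_0|_{\partial B_R}$ just reproduces $u_0$ on $B_R$ and gives no new information, since you do not know $u_0|_{\partial B_R}$ explicitly.

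The paper avoids blow-up classification entirely and works instead with a direct barrier inside the tangent ball, following \cite[Lemma 11.17]{CafSal}. One takes the cone $\Phi(x)=1-|x-y_1|$ (which, like the radial profile \eqref{radial}, satisfies $-\Delta_\infty\Phi=0$ and $|\nabla\Phi|=1$ away from the center). Non-degeneracy in the form of Corollary~\ref{coro62} gives $u_\infty\geq \mathfrak{c}\,\Phi$ in $B_1(y_1)$ for some $\mathfrak{c}>0$; one then lets $\mathfrak{c}_r$ be the optimal such constant on $B_r(y_1)$, observes that $\mathfrak{c}_r$ is monotone and bounded (by the Lipschitz estimate), and passes to the limit $\mathfrak{c}_\infty$. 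This yields the asymptotic expansion $u_\infty(x)=\mathfrak{c}_\infty\,(x-y)\cdot\nu+o\big((x-y)\cdot\nu\big)$ near $y$, from which the tangent hyperplane is read off directly. The barrier argument is both shorter and sidesteps the global-solution classification that blocks your approach.
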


\begin{proof}
The proof follows similarly to \cite[Lemma 11.17]{CafSal}, thus we only sketch the modifications for the reader's convenience. Let us suppose that $B_1(y_1)$ is tangent to $\mathfrak{F}_{\Omega}[u_{\infty}]$ at $y$ and consider the  function
$$
  \Phi(x) = 1-|x-y_1|.
$$
From the non-degeneracy, some multiple of $\Phi$, say $\mathfrak{c} \Phi$, is a lower barrier of
$u_{\infty}$ in $B_1(y_1)$. Now, let $\mathfrak{c}_r > 0$ be the supremum of all $\mathfrak{c}$'s such that $u(x) \geq \mathfrak{c} \Phi(x)$ in $B_r(y_1)$.

Notice that such values $\mathfrak{c}_r$ increase with $r$. Hence, by optimal regularity,
$\mathfrak{c}_r$ converges to some constant $\mathfrak{c}_{\infty}$ as $r \to 0$.
According to \cite[Lemma 11.17]{CafSal}, this implies the following asymptotic behaviour near the free boundary
$$
  u_{\infty}(x) =  \mathfrak{c}_{\infty} (x-y)\cdot\eta(y) + o \left((x-y)\cdot\eta(y)\right),
$$
where $\eta(y) = y_1-y$. Therefore, the plane orthogonal to $\eta(y)$ is tangent to $\mathfrak{F}_{\Omega}[u_{\infty}]$ and, we conclude that $y$ is a regular point.
\end{proof}

In particular, the previous result reveals that at interior free boundary points verifying the interior ball  condition, limit solutions for the $p-$obstacle problem with zero constraint, are regular.

In the last part of this paper we include two examples to see what kind of solutions to \eqref{EqLim} one can expect.

\begin{example}[{\bf\bf Radial solutions}] First of all, let us study the following boundary value problem:
\begin{equation}\label{rad eq}
	\left \{
		\begin{array}{rllll}
			-\Delta_p u &=& -\lambda_0 \chi_{\{u>0\}}(x) & \text{ in } & B_{R}(x_0) \\
			u(x) &=& \kappa &\text{ on } & \partial B_{R}(x_0),
		\end{array}
	\right.
\end{equation}
where $R, \lambda_0$ and $\kappa$ are a positive constants.

Observe that by the uniqueness of solutions for the Dirichlet problem \eqref{rad eq} and invariance under rotations of the $p-$Laplace operator, it is easy to see that $u$ must be a radially symmetric function. Hence, let us deal with the following one-dimensional ODE
\begin{equation}\label{rad edo}
			-(|v^{\prime}(t)|^{p-2}v^{\prime}(t))^{\prime} = -\lambda_0 v_{+}(t) \quad \text{ in }\, (0, \mathfrak{T}), \qquad v(0)=0 \,\,\,\text{and}\,\,\,v(\mathfrak{T})=\kappa.
\end{equation}
It is straightforward to check that $v(t)=\Theta(1, \lambda_0, p) t^{\,\frac{p}{p-1}}$ is a solution to \eqref{rad edo}, where
\begin{equation}\label{theta}
	\Theta = \Theta(N, \lambda_0, p) \defeq  \frac{p-1}{p}\left(\frac{ \inf_{\Omega}\lambda_0(x)}{ N}\right)^{\frac{1}{p-1}}
	\quad \mbox{and} \quad
	\mathfrak{T} \defeq \left( \frac{\kappa}{\Theta} \right)^{\frac{p-1}{p}}. 	
\end{equation}
Now, in order to characterize the unique solution \eqref{rad eq} fix $x_0 \in \mathbb{R}^N$ and $0<\mathfrak{r}_0<R$. We assume the \textit{compatibility condition} for the dead-core problem, namely $R > \mathfrak{T}$. Thus, for $\mathfrak{r}_0 = R-\mathfrak{T}$ the radially symmetric function given by
\begin{equation}\label{RadProf}
  u(x)\defeq \Theta \left[|x-x_0|-R + \left( \frac{1}{\Theta} \right)^{\frac{p-1}{p}} \right]_+^{\frac{p}{p-1}} = \Theta
  \left(|x-x_0|-\mathfrak{r}_0 \right)_+^{\frac{p}{p-1}}
\end{equation}
fulfils \eqref{rad eq} in the weak sense, where $\mathfrak{r}_0 \defeq R - \left( \frac{\kappa}{\Theta} \right)^{\frac{p-1}{p}}$. Moreover, the dead-core is given  by $B_{\mathfrak{r}_0}(x_0)$.

Also it is easy to see that the limit radial profile as $p \to \infty$ becomes
\begin{equation} \label{radial}
  u_{\infty}(x)\defeq \left(|x-x_0|-\mathfrak{r}_0 \right)_+,
\end{equation}
which satisfies \eqref{EqLim} in the viscosity sense with $\Omega = B_{R}(x_0)$ the dead-core given by $B_{\mathfrak{r}_0}(x_0)$ for $\mathfrak{r}_0 = R - 1$ and $g\equiv \kappa$ on $\partial B_{R}(x_0)$.
\end{example}

\begin{example}Finally, by considering the one dimensional problem
$$
\left\{
\begin{array}{rclcl}
  \max\{- u'', \chi_{\{u>0\}}-|u'|\} & = & 0 &\text{ in } & (-1,4) \\
  u(-1) & = & 1& & \\
  u(4) & = & -1, & &
\end{array}
\right.
$$
it is straightforward to verify that $u(x) = \left\{
\begin{array}{rcl}
 -x  & \text{if} & x\in (-1, 0] \\
 -\frac{1}{4}x & \text{if} & x\in [0, 4)
\end{array}
\right.$
is the unique viscosity solution to our gradient constraint problem.
\end{example}

\subsection*{Acknowledgements}
This work was partially supported by Consejo Nacional de Investigaciones Cient\'{i}ficas y T\'{e}cnicas (CONICET-Argentina).

\end{document}